\newtheorem{thm}{Theorem}[section]
\newtheorem{lem}[thm]{Lemma}
\newtheorem{coro}[thm]{Corollary}
\theoremstyle{definition}\newtheorem{exam}[thm]{Example}}
\theoremstyle{definition}\newtheorem{rem}[thm]{Remark}}
\title{Homotopy Types Of Toric Orbifolds From Weyl Polytopes} 
\author{
  Tao Gong\\
  \texttt{tgong23@uwo.ca} \\
}
\date{}
\begin{document}
\maketitle

\begin{abstract}
 Given a reduced crystallographic root system with a fixed simple system, it is associated to a Weyl group $W$, parabolic subgroups $W_K$'s and a polytope $P$ which is the convex hull of a dominant weight.
 The quotient $P/W_K$ can be identified with a polytope. Polytopes $P$ and $P/W_K$ are associated to toric varieties $X_P$ and $X_{P/W_K}$ respectively. It turns out the underlying topological spaces $X_P/W_K$ and $X_{P/W_K}$ are homotopy equivalent, when considering the polytopes in the real span of the root lattice or of the weight lattice. 
\end{abstract}

\section{Introduction}

Let $R$ be a reduced crystallographic root system in a real Euclidean space $V$ with a fixed system of simple roots, those hyperplanes orthogonal to roots divide $V$ into connected components, called chambers. 
There is an associated Weyl group $W$ to $R$ that is generated by
reflections by the hyperplanes. The convex hull $P$ of the $W$-orbit of a  dominant weight is a polytope, called a Weyl polytope. The polytope $P$ carries a natural action of $W$, and the quotient space $P/W$ can be identified with another polytope contained in $P$, which is the intersection of $P$ and a fixed closed chamber $\bar{C}$ of the $W$-action.
Then we can consider the associated normal fans $\Sigma_P$ and $\Sigma_{P/W}$ in the space $V$, regarded as the real span of the coweight lattice. Associated to normal fans, there are toric varieties $X_P$ and $X_{P/W}$, and $X_P$ carries a $W$-action which comes from that over $P$. 

Here is a natural question:  is $X_P/W$ isomorphic to $X_{P/W}$?
\cite{blume2015toric}  showed positive answers with respect to varieties  for Lie types $A$, $B$ and $C$.  This paper gives out a positive answer for an arbitrary root system $R$ in the topological level.

\begin{thm}
 There is a homotopy equivalence $\Phi:X_P/W \to X_{P/W}$.
\end{thm}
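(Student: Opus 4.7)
My plan is to use the orbit-space description of toric varieties and carry out a stratum-by-stratum comparison over the polytope $P/W \cong P\cap\bar{C}$. Recall that for a rational polytope $Q\subset V$, the toric variety $X_Q$ with its compact-torus $T$-action is homeomorphic to the identification space $(T\times Q)/\!\sim_Q$, where $(t,x)\sim_Q(t',x)$ iff $t(t')^{-1}$ lies in the stabilizer subtorus $T^F\le T$ corresponding to the face $F$ of $Q$ whose relative interior contains $x$; equivalently, the quotient by the $T$-action realizes $\pi_Q:X_Q\to Q$ with fiber $T/T^F$ over $\operatorname{relint}(F)$. Applied to both $P$ and $P/W\cong P\cap\bar{C}$, this presents both spaces over the \emph{same} ambient torus $T$ and reduces the question to a fiberwise comparison.

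First, I would construct $\Phi: X_P/W \to X_{P/W}$. The $W$-action on $X_P$ is induced by the diagonal $W$-action on $T\times P$ coming from the $W$-action on the coweight lattice. Given $[(t,x)]\in X_P/W$ with $x\in P$, choose $w\in W$ with $wx\in\bar{C}$ and set $\Phi[(t,x)] = [(wt,wx)]_{X_{P/W}}$. Well-definedness reduces to two checks. Replacing $(t,x)$ by another $\sim_P$-representative is handled by the inclusion $T^G\subseteq T^{F'}$ whenever the face $F'$ of $P\cap\bar{C}$ is contained in the face $G$ of $P$ (since spans shrink and annihilators grow). Replacing $w$ by $w\sigma$ for $\sigma\in W_x$ is handled by the observation that for any $\tau\in W_K$ (the parabolic stabilizer of $wx$) and $u\in T$, the element $u\cdot\tau(u)^{-1}$ lies in the subtorus $T_K\le T$ spanned by the coroots of $W_K$; and $T_K\subseteq T^{F'}$ because $wx$ lies on the walls of $\bar{C}$ indexed by $K$, forcing the minimal face $F'$ of $P\cap\bar{C}$ through $wx$ to lie in $V^K=\bigcap_{i\in K}H_{\alpha_i}$.

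Second, I would show $\Phi$ is a fiberwise homotopy equivalence. Pick $p\in\operatorname{relint}(F')$ in a face $F'$ of $P\cap\bar{C}$, let $G\supseteq F'$ be the minimal face of $P$ through $p$, and let $W_K$ be the parabolic stabilizer of $p$. The fiber of $X_P/W$ over $p$ is $(T/T^G)/W_K$ and the fiber of $X_{P/W}$ is $T/T^{F'}$; the induced map is the natural projection, which is well-defined because $W_K$ acts trivially on $T/T^{F'}$ (as $F'\subseteq V^K$). The crucial technical fact I would invoke is that the quotient of a compact torus by a finite reflection-group action deformation retracts onto the identity component of its fixed subtorus (via the alcove description of such quotients, i.e., Stiefel's theorem applied inside $V_K$); combined with the identification $\operatorname{span}(G-G)\cap V^K=\operatorname{span}(F'-F')$, which follows from writing the affine span of $F'$ as the intersection of $G$'s affine span with the walls $H_{\alpha_i}$, $i\in K$, this yields the fiberwise homotopy equivalence.

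The main obstacle is this fiberwise comparison in the most degenerate stratum, where $F'$ sits on several walls of $\bar{C}$ and in the interior of $P$, so $G$ properly contains $F'$ and $W_K$ is large; here the alcove-contraction argument does the real work, and one must also control the difference between $(T')^{W_K}$ and its identity component arising from non-trivial torsion in the lattice quotient. To globalize, I would stratify both spaces by the face poset of $P\cap\bar{C}$, observe that $\Phi$ is proper and stratum-preserving, and apply an inductive Mayer--Vietoris (or homotopy colimit) argument over codimension, using compatible equivariant normal slices at each stratum to glue the local homotopy equivalences into a global one.
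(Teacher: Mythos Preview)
Your construction of $\Phi$ via the quotient model $(S_N\times P)/{\sim}$ and your local analysis over each face $H_IF_J\cap\bar C$ match the paper's setup, and the core local ingredient---that a torus modulo a Weyl-group action is contractible via the alcove description---is the same idea the paper uses. The genuine divergence is in the globalization step.

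The paper does not attempt to glue fiberwise homotopy equivalences over the face stratification of $P\cap\bar C$. Instead it looks at the point-fibers of $\Phi$ over $X_{P/W}$ directly, identifies each one as a quotient $\bar{A_I}\big/\bigl(\mathcal{SP}^\vee_{I,J}/\mathcal{Q}^\vee_I\bigr)$ of a closed alcove by a finite group of affine automorphisms (hence contractible and locally contractible), and then applies Smale's Vietoris-type theorem: a proper surjection between locally compact, separable metric, locally $k$-connected spaces with $(k{-}1)$-connected and locally $(k{-}1)$-connected fibers induces isomorphisms on $\pi_r$ for $r<k$. Letting $k$ run gives a weak equivalence; both spaces are shown to be simplicial complexes (Illman's equivariant triangulation theorem for $X_P/W$, standard triangulability of toric varieties for $X_{P/W}$), which upgrades the weak equivalence to a genuine one via Whitehead.

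Your stratified Mayer--Vietoris\,/\,homotopy-colimit route could in principle be made to work, but as sketched it carries an unpaid debt: you need the closed strata to include as cofibrations on both sides, and you need the fiberwise retractions to be coherent across adjacent strata. The second point is not automatic, because the $W_I$-action on the fiber torus $T^{F'}/T^G$ over a basepoint $[t]\in T/T^{F'}$ is twisted by the cocycle $t^{-1}w(t)$, so the alcove retractions over different basepoints are not literally the same map. Your ``crucial technical fact'' is also slightly misstated: $(T/T^G)/W_I$ does not deformation retract onto a fixed subtorus sitting inside it; what is actually true---and what the paper proves---is that the projection $(T/T^G)/W_I\to T/T^{F'}$ has contractible fibers. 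Smale's theorem consumes exactly this input and bypasses the need to build compatible retractions or verify cofibration conditions, which is what makes the paper's argument short.
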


This result is a special case of a general one. For any parabolic subgroup $W_K$ of $W$  associated to a subset of simple roots, the quotient space $P/W_K$ can be identified with a polytope,  which is the intersection of $P$ and a fixed fundamental domain $\overline{C_K}$ of the $W_K$-action on $V$. The normal fan $\Sigma_{P/W_K}$ of $P/W_K$, considered in the real span of the coweight lattice, is associated to a toric variety $X_{P/W_K}$.
\begin{thm}
  There is a homotopy equivalence $\Phi_K:X_P/W_K \to X_{P/W_K}$ for any parabolic subgroup $W_K$ of $W$.
\end{thm}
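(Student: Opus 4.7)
The plan is to model both toric varieties via the classical ``polytope times torus modulo face-dependent collapsing'' description, construct $\Phi_K$ explicitly by folding $P$ onto the fundamental domain $P \cap \overline{C_K}$, and then verify the homotopy equivalence by a fiberwise analysis over the common base $P/W_K$.

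First, I would present both $X_P$ and $X_{P/W_K}$ in the form $(Q \times T)/\!\sim_Q$, where $T = N_\mathbb{R}/N$ is the compact coweight torus and $\sim_Q$ collapses, at each $x \in Q$, the subtorus $T_{F(x)}\subset T$ spanned by the normal cone to the smallest face $F(x)$ of $Q$ containing $x$. The group $W_K$ acts diagonally on $X_P$ via its reflection action on $P$ and on $T$. For $Q = P \cap \overline{C_K}$, identified with $P/W_K$, the normal cone at $\bar x$ is generated by the normal cone of the containing face of $P$ together with the outward normals $-\alpha_i^\vee$ to the walls $H_{\alpha_i}$ of $\overline{C_K}$ passing through $\bar x$; consequently $T_{F_{P/W_K}(\bar x)} = T_{F_P(\bar x)} \cdot T_{W_{K,\bar x}}$, where $T_{W_{K,\bar x}}$ is the subtorus spanned by the coroots of the simple reflections in the stabilizer $W_{K,\bar x}$. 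I then define $\Phi_K$ on representatives by $(x,t)\mapsto (\bar x,\, w_x\cdot t)$, with $\bar x\in P\cap\overline{C_K}$ the $W_K$-orbit representative of $x$ and $w_x\in W_K$ any element satisfying $w_x x = \bar x$. The three required checks are routine: the ambiguity $w_x\leadsto s_iw_x$ for $s_i\in W_{K,\bar x}$ changes $w_x\cdot t$ by an element of $\exp(\mathbb{R}\alpha_i^\vee)\subset T_{W_{K,\bar x}}$, which is absorbed by $\sim_{P/W_K}$; the relation $\sim_P$ is respected because $w_x T_{F_P(x)} = T_{F_P(\bar x)}\subset T_{F_{P/W_K}(\bar x)}$; and $W_K$-invariance is automatic. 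Hence $\Phi_K$ descends to a well-defined continuous map $X_P/W_K\to X_{P/W_K}$.

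To verify that $\Phi_K$ is a homotopy equivalence I would work fiberwise over $P/W_K$: both moment maps commute with $\Phi_K$, and over $[\bar y]$ the fiber of $X_P/W_K$ is $(T/T_{F_P(\bar y)})/W_{K,\bar y}$ while that of $X_{P/W_K}$ is $T/(T_{F_P(\bar y)}\cdot T_{W_{K,\bar y}})$, with $\Phi_K$ restricting to the natural further-quotient map. The essential auxiliary input is the classical theorem that for any finite reflection group $W_J$ acting on its compact coroot torus $T_J$, the quotient $T_J/W_J$ is homeomorphic to the closed fundamental alcove, hence contractible; a relative version absorbing the factor $T_{F_P(\bar y)}$ shows that the (possibly singular) fibers of the further-quotient map are contractible, so $\Phi_K$ is a fiberwise homotopy equivalence. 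The main obstacle, and the step I expect to require the most care, is upgrading these pointwise equivalences into a global one, since the fiber structure jumps across strata of $P/W_K$ with varying stabilizers $W_{K,\bar y}$ and face types $F_P(\bar y)$. I would address this by inducting on the face lattice of $P/W_K$: on each open stratum the moment-map projections are trivial bundle-like objects on which $\Phi_K$ is a trivial equivalence, while the normal-cone formula $T_{F_{P/W_K}(\bar x)} = T_{F_P(\bar x)}\cdot T_{W_{K,\bar x}}$ ensures the gluing maps between adjacent strata are compatible with $\Phi_K$. A standard Mayer--Vietoris / NDR-pair gluing argument, combined with Whitehead's theorem applied to the natural CW structures on both sides, then assembles the fiberwise equivalences into the global homotopy equivalence $\Phi_K$.
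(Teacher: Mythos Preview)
Your construction of $\Phi_K$ and your identification of its fibers are the same as the paper's: both sides are presented as $(S_N\times (P\cap\overline{C_K}))/\!\sim$ with two different equivalence relations, $\Phi_K$ is the evident further-quotient map, and its fiber over $[t,p]$ with $p$ in the relative interior of $s(H_IF_J)\cap\overline{C_K}$ is $(S_{N(\sigma_{K,p})}/S_{N(\sigma_p)})/sW_Is^{-1}$, which is contractible because a compact torus modulo its reflection group is the closed fundamental alcove (a polytope) modulo a finite affine action.

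The real divergence is in how you globalize. You propose an induction on the face lattice of $P/W_K$, gluing stratum-by-stratum via Mayer--Vietoris and NDR pairs; you correctly flag this as the delicate step. The paper bypasses it entirely by invoking Smale's Vietoris-type theorem \cite{smale1957vietoris}: once you know that $X_P/W_K$ and $X_{P/W_K}$ are compact, connected, locally contractible separable metric spaces (which follows from Illman's equivariant triangulation theorem for the former and ordinary triangulability of toric varieties for the latter), a proper surjection with contractible and \emph{locally} contractible fibers induces isomorphisms on all homotopy groups, and Whitehead finishes. This is a substantial simplification over your stratified argument: Smale's theorem does all the gluing for you, at the modest cost of checking local contractibility of the fibers (which you omit) and establishing the CW/simplicial structure on $X_P/W_K$ (which your sketch takes for granted but actually requires Illman's theorem, since the $W_K$-action is not free). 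Your route would work, but the bookkeeping of NDR structures across strata with jumping stabilizers is exactly the kind of thing Vietoris--Smale theorems exist to avoid.
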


For the proof of the result, we will consider the topological models of the varieties $X_{P}$ and $X_{P/W_K}$, then show that $X_P/W_K$ and $X_{P/W_K}$ are finite simplicial complexes and the fiber of $\Phi_K$ is (locally) contractible, which will lead to the homotopy equivalence. It is worth mentioning that the fiber of $\Phi_K$ is, roughly speaking, homeomorphic to the quotient of a torus under the action of its corresponding Weyl group.   One can see the Figure \ref{space of A1} below for spaces assocaited to the root system of type $A_1$.

\begin{figure}[h!]
  \caption{Associated topological spaces to Lie type $A_1$}\label{space of A1}
  \captionsetup[subfigure]{font=footnotesize}
  \centering
  \subcaptionbox*{$X_P\cong S^2$}[.3\textwidth]{%
   \tikzset{every picture/.style={line width=0.75pt}} 
  \begin{tikzpicture}[x=0.75pt,y=0.75pt,yscale=-1,xscale=1]
\draw   (35.85,96.56) .. controls (35.85,68.95) and (58.23,46.56) .. (85.85,46.56) .. controls (113.46,46.56) and (135.85,68.95) .. (135.85,96.56) .. controls (135.85,124.18) and (113.46,146.56) .. (85.85,146.56) .. controls (58.23,146.56) and (35.85,124.18) .. (35.85,96.56) -- cycle ;
\draw   (85.53,46.5) .. controls (93.4,46.47) and (99.84,68.85) .. (99.93,96.48) .. controls (100.02,124.12) and (93.71,146.54) .. (85.85,146.56) .. controls (77.98,146.59) and (71.53,124.21) .. (71.45,96.57) .. controls (71.36,68.94) and (77.67,46.52) .. (85.53,46.5) -- cycle ;
\end{tikzpicture}
  }%
\subcaptionbox*{$X_P/W\cong$ quotient of a hemisphere by collapsing the equator to a segment}[.3\textwidth]{
 \tikzset{every picture/.style={line width=0.75pt}} 
\begin{tikzpicture}[x=0.75pt,y=0.75pt,yscale=-1,xscale=1]
\draw  [draw opacity=0] (266,46.56) .. controls (293.54,46.65) and (315.85,69) .. (315.85,96.56) .. controls (315.85,124.12) and (293.54,146.48) .. (266,146.56) -- (265.85,96.56) -- cycle ; \draw   (266,46.56) .. controls (293.54,46.65) and (315.85,69) .. (315.85,96.56) .. controls (315.85,124.12) and (293.54,146.48) .. (266,146.56) ;  
\draw   (265.85,46.56) .. controls (273.71,46.54) and (280.16,68.92) .. (280.25,96.55) .. controls (280.33,124.18) and (274.03,146.6) .. (266.16,146.63) .. controls (258.29,146.65) and (251.85,124.27) .. (251.76,96.64) .. controls (251.67,69.01) and (257.98,46.59) .. (265.85,46.56) -- cycle ;
\draw [color={rgb, 255:red, 208; green, 2; blue, 27 }  ,draw opacity=1 ]   (254,67) -- (276,67.55) ;
\draw [shift={(278,67.6)}, rotate = 181.43] [color={rgb, 255:red, 208; green, 2; blue, 27 }  ,draw opacity=1 ][line width=0.75]    (10.93,-3.29) .. controls (6.95,-1.4) and (3.31,-0.3) .. (0,0) .. controls (3.31,0.3) and (6.95,1.4) .. (10.93,3.29)   ;
\draw [color={rgb, 255:red, 208; green, 2; blue, 27 }  ,draw opacity=1 ]   (251.76,96.64) -- (278.25,96.56) ;
\draw [shift={(280.25,96.55)}, rotate = 179.82] [color={rgb, 255:red, 208; green, 2; blue, 27 }  ,draw opacity=1 ][line width=0.75]    (10.93,-3.29) .. controls (6.95,-1.4) and (3.31,-0.3) .. (0,0) .. controls (3.31,0.3) and (6.95,1.4) .. (10.93,3.29)   ;
\draw [color={rgb, 255:red, 208; green, 2; blue, 27 }  ,draw opacity=1 ]   (253.85,126.26) -- (275.85,126.81) ;
\draw [shift={(277.85,126.86)}, rotate = 181.43] [color={rgb, 255:red, 208; green, 2; blue, 27 }  ,draw opacity=1 ][line width=0.75]    (10.93,-3.29) .. controls (6.95,-1.4) and (3.31,-0.3) .. (0,0) .. controls (3.31,0.3) and (6.95,1.4) .. (10.93,3.29)   ;
\end{tikzpicture}}
\subcaptionbox*{$X_{P/W}\cong S^2$}[.3\textwidth]{
\tikzset{every picture/.style={line width=0.75pt}} 

\begin{tikzpicture}[x=0.75pt,y=0.75pt,yscale=-1,xscale=1]

\draw   (420.85,106.56) .. controls (420.85,92.75) and (432.04,81.56) .. (445.85,81.56) .. controls (459.65,81.56) and (470.85,92.75) .. (470.85,106.56) .. controls (470.85,120.37) and (459.65,131.56) .. (445.85,131.56) .. controls (432.04,131.56) and (420.85,120.37) .. (420.85,106.56) -- cycle ;
\draw   (445.69,81.5) .. controls (450.11,81.49) and (453.72,92.68) .. (453.77,106.51) .. controls (453.81,120.33) and (450.26,131.55) .. (445.85,131.56) .. controls (441.43,131.58) and (437.81,120.38) .. (437.77,106.56) .. controls (437.72,92.73) and (441.27,81.52) .. (445.69,81.5) -- cycle ;

\end{tikzpicture}
  }
\end{figure}
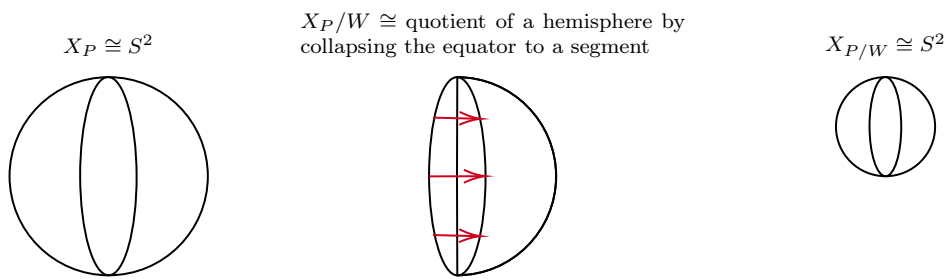

\begin{coro}
  $\Phi_K$ induces the isomorphism of (co)homology of $X_P/W_K$ and $X_{P/W_K}$ with any coefficient ring.
\end{coro}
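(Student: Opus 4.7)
The corollary is a direct consequence of Theorem 1.2, which gives the homotopy equivalence $\Phi_K : X_P/W_K \to X_{P/W_K}$. The plan is simply to invoke the homotopy invariance of singular (co)homology.

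More precisely, I would first note that singular homology $H_*(-;R)$ and singular cohomology $H^*(-;R)$ with coefficients in any ring $R$ are functors on the homotopy category of topological spaces. In particular, if $f : X \to Y$ is a homotopy equivalence with homotopy inverse $g : Y \to X$, then $f_* \circ g_* = (f \circ g)_* = (\mathrm{id}_Y)_* = \mathrm{id}_{H_*(Y;R)}$ and similarly $g_* \circ f_* = \mathrm{id}_{H_*(X;R)}$, so $f_*$ is an isomorphism; the same argument, contravariantly, applies to cohomology. Applying this to $f = \Phi_K$ gives the desired isomorphisms
\[
(\Phi_K)_* : H_*(X_P/W_K; R) \xrightarrow{\cong} H_*(X_{P/W_K}; R), \qquad (\Phi_K)^* : H^*(X_{P/W_K}; R) \xrightarrow{\cong} H^*(X_P/W_K; R).
\]

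There is no real obstacle here beyond having Theorem 1.2 in hand; the entire content of the corollary is the functoriality and homotopy invariance of (co)homology, which is standard. If one wished to be slightly more self-contained, I would mention that $X_P/W_K$ and $X_{P/W_K}$ are finite simplicial complexes (as noted in the paragraph preceding Theorem 1.2), so any of the standard cohomology theories (singular, simplicial, Čech, cellular) agree and the conclusion holds uniformly.
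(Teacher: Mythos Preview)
Your proposal is correct and matches the paper's approach: the paper states the corollary without proof, treating it as an immediate consequence of the homotopy equivalence in Theorem~1.2 via the homotopy invariance of (co)homology, exactly as you argue.
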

This corollary covers a result from \cite{horiguchi2021toric}, which showed the rational cohomology isomorphism of $X_P/W_K$ and $X_{P/W_K}$ for Lie types $A$, $B$, $C$ and $D$.

We will get similar results in the dual setting. Consider the Weyl polytope in the real span of weight lattice, which is still $V$ but with a different generating lattice, for this reason we call this polytope an alternative Weyl polytope, denoted by $Q$. Then the normal fans $\Sigma_{Q}, \Sigma_{Q/W_K}$ are lying in the real span of the coroot lattice, and they are associated to toric varieties $X_{Q}, X_{Q/W_K}$ respectively.

\begin{thm}
  There is a homotopy equivalence $\Psi_K:X_{Q}/W_K \to X_{Q/W_K}$ for any parabolic subgroup $W_K$ of $W$.
 \end{thm}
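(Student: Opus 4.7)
The plan is to mirror the strategy used for $\Phi_K$ in the preceding theorem, since the dual setup is formally parallel. The underlying real vector space $V$ is unchanged; only the reference lattice is replaced — the weight lattice takes the place of the root lattice and the coroot lattice $\Lambda^\vee$ takes the place of the coweight lattice. Because the $W$-action on $V$ preserves all four lattices (root, weight, coroot, coweight), every $W$-equivariant combinatorial datum (faces of $Q$, cones of $\Sigma_Q$, point stabilizers inside $W_K$, etc.) is formally identical to the corresponding datum for $P$ and $\Sigma_P$, so the same line of argument should carry through.

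First I would construct $\Psi_K$. The $W$-action on $Q$ permutes the cones of $\Sigma_Q$, and since $W$ preserves $\Lambda^\vee$ this action lifts to a toric action of $W$ on $X_Q$. The quotient of fans $\Sigma_Q/W_K = \Sigma_{Q/W_K}$ (cones being $W_K$-orbits) then induces a canonical continuous map $\Psi_K : X_Q/W_K \to X_{Q/W_K}$. I would next pass to the topological model $X_Q \cong (Q \times T^\vee)/{\sim}$, with $T^\vee = V/\Lambda^\vee$ and equivalence that collapses over each face $F$ of $Q$ the subtorus corresponding to the linear span of its normal cone, and use the same model for $X_{Q/W_K}$. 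From this description, both $X_Q/W_K$ and $X_{Q/W_K}$ inherit finite simplicial structures from $\Sigma_Q$ and its $W_K$-quotient, exactly as in the coweight-lattice case. Under this model, the fiber of $\Psi_K$ over a point lying above $\bar{p} \in Q/W_K$ is identified with a quotient of a subtorus of $T^\vee$ by the parabolic stabilizer $W_{K,p} \subseteq W_K$ of a representative $p$ in the fundamental domain $\overline{C_K}\cap Q$.

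The crux is local contractibility of these fibers. Each $W_{K,p}$ is a finite reflection group acting on its subtorus of $T^\vee$ by a reflection representation, so the quotient has the local structure of a fundamental domain for the associated affine reflection group; by the standard alcove description this quotient is (locally) contractible. Once this is in hand, the Vietoris--Begle-type argument used for $\Phi_K$ upgrades $\Psi_K$ to a homotopy equivalence. The main obstacle is the lattice bookkeeping: although the abstract reflection structure on $V$ is the same, the $W$-action on $\Lambda^\vee$ is the contragredient of its action on the coweight lattice, so one must check directly that each parabolic stabilizer $W_{K,p}$ acts on the relevant subtorus of $T^\vee$ by a genuine reflection representation. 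This ultimately follows from the intrinsic definition of reflections on $V$ together with the $W$-stability of $\Lambda^\vee$, but it is the step where care is required, and it is what guarantees the dual proof goes through unchanged.
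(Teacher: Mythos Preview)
Your overall strategy is the same as the paper's: build $\Psi_K$ from the topological model, verify both source and target are finite simplicial complexes, identify each fiber as a torus modulo a parabolic reflection group, reduce its contractibility to the alcove picture, and conclude via Smale's Vietoris--Begle theorem. That is exactly how the paper proceeds (it simply points back to the argument for $\Phi_K$ with the lattice $\mathcal{P}^\vee$ replaced by $\mathcal{Q}^\vee$).

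One point to correct: your construction of $\Psi_K$ via ``$\Sigma_Q/W_K=\Sigma_{Q/W_K}$ (cones being $W_K$-orbits)'' is not right. The normal fan $\Sigma_{Q/W_K}$ contains rays such as $\mathrm{Cone}(\alpha_k^\vee)$ for $k\in K$, coming from the new facets $H_k\cap Q$ of $Q\cap\overline{C_K}$; these rays do not appear in $\Sigma_Q$ at all, so $\Sigma_{Q/W_K}$ is not a quotient (or coarsening) of $\Sigma_Q$ in any naive sense, and there is no induced toric morphism to invoke. The paper sidesteps this entirely: it defines $\Psi_K$ purely on the level of the topological models, observing that the relation $\sim_{ed}$ on $S_N\times(Q\cap\overline{C_K})$ refines the relation $\sim$, so the identity on $S_N\times(Q\cap\overline{C_K})$ descends to a continuous surjection $\Psi_K$. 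Once you drop the fan-quotient sentence and construct $\Psi_K$ this way, the rest of your outline matches the paper and goes through; in particular the lattice check you flag amounts to the containments $\mathcal{Q}_I^\vee\subset \mathrm{Span}(\{\alpha_i^\vee\}_{i\in I})\cap\bigl(\mathcal{Q}^\vee+\mathrm{Span}(\{\omega_j^\vee\}_{j\in J})\bigr)\subset\mathcal{P}_I^\vee$, which follow immediately from $\mathcal{Q}^\vee\subset\mathcal{P}^\vee$ and the remark after Lemma~4.8.
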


In Section 2, we will review how to construct a toric variety from a polytope, and how to obtain a topological model of the toric variety.
In Section 3, we will review root systems and show the structure of the Weyl polytope and the quotient polytope.
In Section 4, we will prove the special case of the main result with respect  to the Weyl group from the Weyl polytope.
In Section 5, we will prove the general form of the main result with respect to the parabolic subgroup of the Weyl group from the Weyl polytope.
In Section 6, we will prove the dual version of the main result from the alternative Weyl polytope.

\subsubsection*{Acknowledgments. {\rm Thanks to Matthias Franz for inspiring discussions, also to Sayantan Roy Chowdhury on Algebraic Geometry.}}

\section{Toric varities from polytopes}
We mainly follow \cite{cox2011toric} and \cite{fulton1993introduction} for  toric varieties. 

Given a lattice $M$ of rank $n$, its dual lattice $N$ and a lattice polytope $P$ of dimension $n$ in $M_{\mathbb{R}}:=M\otimes_{\mathbb{Z}}\mathbb{R}$, each facet $F$ of $P$ admits a normal vector $l_{F}$ in $N_{\mathbb{R}}:=N\otimes_{\mathbb{Z}}\mathbb{R}$, which can be chosen integral, primitive and pointing inside the polytope $P$. Define the \textbf{normal fan} $\Sigma_P$ whose cones are generated by those sets of normal vectors $l_{F_{i_1}},l_{F_{i_2}},\cdots,l_{F_{i_k}}$ whose corresponding facets $F_{i_1},F_{i_2},\cdots,F_{i_k}$ have non-empty intersection in $P$.
Each cone $\sigma$ in  $\Sigma_P$ corresponds to an affine toric variety $X_{\sigma}$, and these affine varieties fit together to form an algebraic variety, denoted by $X_P$.

More generally, there is a bijection between rational fans in $\mathbb{R}^n$ and toric varieties of complex dimension $n$, see \cite[Theorem 3.1.19]{cox2011toric}. A toric variety is compact if and only if the corresponding fan is \textbf{complete}; a toric variety is smooth if and only if the corresponding fan is \textbf{smooth}; a toric variety is a \textbf{toric orbifold} (that is, it is locally isomorphic to a quotient of $\mathbb{C}^n$ by a finite group action) if and only if the corresponding fan is \textbf{simplicial}.

For any cone $\sigma$ in $\Sigma_P$, let $\sigma^{\vee}$ denote $\{v\in M_\mathbb{R}:\,v(u)\ge 0,\forall u\in \sigma\}$, $\sigma^{\bot}$ denote $\{v\in M_\mathbb{R}:\,v(u)= 0,\forall u\in \sigma\}$.
The points of $X_{\sigma}$ can be described as the semigroup homomorphisms $$\mathrm{Hom}_{sg}(\sigma^{\vee}\cap M, \mathbb{C}),$$ where $\mathbb{C}$ is the multiplicative semigroup $\mathbb{C}^{*}$ with $\{0\}$. 
Let $\mathbb{R}_{\ge}$ denote $\mathbb{R}^{+}\cup\{0\}$, the multiplicative semigroup of non-negative real numbers. In this case there is a retraction given by the absolute value map:
$$\mathbb{R}_{\ge}\hookrightarrow \mathbb{C}\xrightarrow{|\cdot|}\mathbb{R}_{\ge}.$$
This determines a subspace
$$(X_{\sigma})_{\ge}:=\mathrm{Hom}_{sg}(\sigma^{\vee}\cap M, \mathbb{R}_{\ge})\subset X_{\sigma}=\mathrm{Hom}_{sg}(\sigma^{\vee}\cap M, \mathbb{C})$$
together with a retraction $X_{\sigma}\to (X_{\sigma})_{\ge}$.
Actually, when one embeds $X_\sigma$ into some $\mathbb{C}^l$ as the zero set of several polynomials, $(X_{\sigma})_{\ge}$ is identified with $X_{\sigma}\cap (\mathbb{R}_{\ge})^l\subset \mathbb{C}^l$. Thus $(X_{\sigma})_{\ge}$ is a closed subspace of $X_{\sigma}$. All $(X_{\sigma})_{\ge}$'s fit together to form a closed subspace $(X_P)_{\ge}$ of $X_P$  with a retraction
$$(X_P)_{\ge}\hookrightarrow X_P\to (X_P)_{\ge}.$$

\begin{exam}
  Let $P$ be the interval $[-1,1]$ in $\mathbb{R}$. Then $\Sigma_P=\left\{\{0\},\,[0,\infty),\,(-\infty,0]\right\}$, $X_{\{0\}}=\mathbb{C}^*$, $X_{[0,\infty)}=\mathbb{C}$ and $X_{(-\infty,0]}=\mathbb{C}$. There are two pushout diagrams of topological spaces
  $$\begin{tikzcd}
    \mathbb{C}^*\arrow[r,"x\mapsto x^{-1}"]\arrow[d,"x\mapsto x"']\arrow[rd,phantom,"\ulcorner" very near end] &  \mathbb{C}\arrow[d]\\
    \mathbb{C}\arrow[r] & X_{P},
  \end{tikzcd}\quad\qquad
  \begin{tikzcd}
    \mathbb{R}^+\arrow[r,"x\mapsto x^{-1}"]\arrow[d,"x\mapsto x"']\arrow[rd,phantom,"\ulcorner" very near end] &  \mathbb{R}_{\ge}\arrow[d]\\
    \mathbb{R}_{\ge}\arrow[r] & (X_{P})_{\ge}.
  \end{tikzcd}$$
  Thus $X_P\cong \mathbb{CP}^1\cong S^2$, $(X_{P})_{\ge}$ is a meridian in $S^2$.
\end{exam}

Consider a toric orbit $O(\sigma)=\mathrm{Hom}_{sg}(\sigma^{\bot}\cap M,\mathbb{C}^*)$ in $X_P$ and its non-negative points $O(\sigma)_{\ge}:=O(\sigma)\cap (X_P)_{\ge}=\mathrm{Hom}_{sg}(\sigma^{\bot}\cap M,\mathbb{R}^+)\subset (X_P)_{\ge}$. 
The canonical map
\begin{align*}
  \mathrm{Hom}_{sg}(M,S^1)\times O(\sigma)_{\ge}&\to  O(\sigma)\\
 (f,g)&\mapsto \left(x\mapsto f(x)\cdot g(x)\right)
\end{align*} 
gives out an isomorphism induced by $\sigma^{\bot}\cap M\hookrightarrow M$ as below:
\begin{align*}
  \mathrm{Hom}_{sg}(\sigma^{\bot}\cap M,S^1)\times O(\sigma)_{\ge}\xrightarrow{\cong}  O(\sigma),
\end{align*}
since there is an isomorphism of semigroups $S^1\times \mathbb{R}^+\cong\mathbb{C}^*$.

Let $N({\sigma})$ be the sublattice of $N$ generated by $\sigma\cap N$, $S_N$ be the compact torus $N_{\mathbb{R}}/N=N\otimes_{\mathbb{Z}}S^1$, $S_{N(\sigma)}$ be the subtorus of $S_N$ defined by $\frac{N(\sigma)\otimes_{\mathbb{Z}}\mathbb{R}+N}{N}$. 
Then $\mathrm{Hom}_{sg}(\sigma^{\bot}\cap M,S^1)=N/N({\sigma})\otimes_{\mathbb{Z}}S^1$ and the diagram 
$$\begin{tikzcd}
  S_N\times O(\sigma)_{\ge}\arrow[rr]\arrow[rd] & & O(\sigma)\\
  & \frac{S_N}{S_{N(\sigma)}}\times O(\sigma)_{\ge}\arrow[ru,"\cong"'] &
\end{tikzcd}$$
commutes. Put all toric orbits $O(\sigma)$'s together, we get a map
$$S_N\times (X_P)_{\ge}\to X_P$$
which realizes $X_P$ as a quotient space.

Recall that for any integer $z\ge n$, the multiple $zP$ is again a lattice polytope of dimension $n$. Polytopes $P$ and $zP$ are homeomorphic via the multiplication by $z$, and they have the same inward-pointing facet normals, hence $\Sigma_P=\Sigma_{zP}$, $X_P=X_{zP}$. Assume that $(zP)\cap M=\{m_0,m_1,\cdots,m_k\}$, then there is 
an embedding
\begin{align*}
  \rho:  X_P&\hookrightarrow \mathbb{CP}^k=\frac{\mathbb{C}^{k+1}\setminus\{0\}}{\mathbb{C}^*}\\
   x &  \mapsto  \left[\chi^{m_0}(x),\chi^{m_1}(x),\cdots,\chi^{m_k}(x)\right].
\end{align*}
More precisely, when $x$ lies in $X_\sigma=\mathrm{Hom}_{sg}(\sigma^{\vee}\cap M, \mathbb{C})$ with $\sigma$ corresponding to some vertex $m_i$ of $zP$, then for any $m\in (zP)\cap M$, $\chi^{m}(x)$ is defined to be $x(m-m_i)$ since $m-m_i\in\sigma^{\vee}\cap M$. See \cite[Theorem 2.3.1]{cox2011toric} for this embedding $\rho$.
From $\rho$, we can get the \textbf{moment map} $\mu: X_P \to M_{\mathbb{R}}$ given by
\begin{align*}
     \mu(x)= \frac{1}{\sum\limits_{m\in (zP)\cap M}|\chi^m(x)|}\sum\limits_{m\in (zP)\cap M}|\chi^m(x)|m.
\end{align*}
Its restriction to $(X_P)_{\ge}$, denoted by $\tilde{\mu}$,  gives out an homeomorphism onto $zP$, for which one can refer to 
\cite[$\S$4.2]{fulton1993introduction} or \cite[Theorem 12.2.5]{cox2011toric}. Furthermore, if a face $F$ of $P$ correponds to the cone $\sigma$ in $\Sigma_P$, then $\tilde{\mu}$ maps $O(\sigma)_{\ge}$ bijectively to the relative interior of $zF$.

Due to all the above, we get a topological model for $X_P$. 
There is an $S_N$-equivariant homeomorphism 
\begin{align}
  X_P\cong \left(S_N\times P\right)/\sim,
\end{align}
where $(t_1,p_1)\sim (t_2,p_2)$ if and only if $p_1=p_2$ and
$t_1$, $t_2$ are congruent modulo the subtorus $S_{N({\sigma_{p_1}})}$ of $S_N$ for the cone $\sigma_{p_1}\in\Sigma_P$ corresponding to the face of $P$ whose relative interior contains $p_1$. Write $[t,p]$ for a point of $X_P$.

Now we show that the topological model for $X_P$ is natural with respect to special lattice isomorphisms.  Let $(N',M',P',\Sigma_{P'})$ be another tuple of a rank-$n$ lattice, its dual, a lattice polytope of dimension $n$ and its normal fan.
Suppose that $r:N\to N'$ is an isomorphism of lattices such that $r^*:M'_{\mathbb{R}}\to M_{\mathbb{R}}$ maps $z'P'$ bijectively onto $zP$ for some integers $z,z'\ge n$. 
Then clearly $r$ is \textbf{compatible} with fans, i.e., $r_*:N_\mathbb{R}\to N'_\mathbb{R}$ maps cones to cones.
One can consider the following diagram 
\begin{equation}\label{Naturality}
  \begin{tikzcd}
    S_N\times (X_P)_{\ge}\arrow[r]\arrow[d,"r_*"'] & X_P\arrow[d,"r_*"]\arrow[r,"\mu"] & M_\mathbb{R}\arrow[r,"\frac{1}{z}"] & M_\mathbb{R}\\
    S_{N'}\times (X_{P'})_{\ge}\arrow[r] & X_{P'}\arrow[r,"\mu'"'] & M'_\mathbb{R}\arrow[u,"r^*"']\arrow[r,"\frac{1}{z'}"'] & M'_\mathbb{R}\arrow[u,"\frac{z'}{z}r^*"']
  \end{tikzcd}
\end{equation}
where $r_*$ (resp. $r^*$) is the covariant (resp. contravariant) map induced by $r$.

\begin{lem}\label{Natural mu}
  Diagram \ref{Naturality} is commutative.
\end{lem}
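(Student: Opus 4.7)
The plan is to split Diagram \ref{Naturality} into its three component squares --- the action square on the left, the moment-map square in the middle, and the scaling square on the right --- and verify them one at a time. The left square will follow from functoriality of the toric morphism induced by $r$; the right square is a tautological identity of scalar multiplications; essentially all the content of the lemma lives in the middle square.

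For the left square, recall that the toric morphism $r_*:X_P\to X_{P'}$ is defined affine-locally as pre-composition with $r^*:M'\to M$: on $X_\sigma=\mathrm{Hom}_{sg}(\sigma^\vee\cap M,\mathbb{C})$ one sets $r_*(x)(m')=x(r^*(m'))$, noting $r^*(m')\in\sigma^\vee\cap M$ whenever $m'\in (r_*\sigma)^\vee\cap M'$. Pre-composition with $r^*$ preserves the subsemigroups of $\mathbb{R}_{\ge}$-valued and $S^1$-valued homomorphisms, so $r_*$ restricts to maps $(X_P)_{\ge}\to(X_{P'})_{\ge}$ and $S_N\to S_{N'}$, which intertwine the pointwise-multiplication action maps. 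The right square reduces to the identity $\tfrac{1}{z}\circ r^*=\tfrac{z'}{z}r^*\circ\tfrac{1}{z'}$ of linear maps $M'_\mathbb{R}\to M_\mathbb{R}$, which is immediate.

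The middle square is the heart of the lemma. The definition of $r_*$ yields the character adjunction $\chi^{m'}(r_*(x))=x(r^*(m'))=\chi^{r^*(m')}(x)$ for all $x\in X_P$ and $m'\in M'$. Since $r^*$ is an isomorphism of lattices carrying $z'P'$ bijectively onto $zP$, it restricts to a bijection $z'P'\cap M'\to zP\cap M$. Substituting the adjunction into the weighted-sum formula for $\mu'(r_*(x))$, pulling the $\mathbb{R}$-linear map $r^*$ inside the weighted average, and re-indexing by $m=r^*(m')$ yields $r^*(\mu'(r_*(x)))=\mu(x)$. This is a straightforward unwinding of the definitions once the character identity is in hand.

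The only genuine bookkeeping obstacle is keeping straight the two scaling factors $z,z'$: $\mu$ takes values in $zP$ while $\mu'$ takes values in $z'P'$, so $r^*$ identifies these codomains only at the unrescaled level, and the factor $\tfrac{z'}{z}$ on the rightmost vertical arrow is precisely what is required to make the rescalings $\tfrac{1}{z}$ and $\tfrac{1}{z'}$ compatible. No further conceptual difficulty is expected.
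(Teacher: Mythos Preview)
Your plan matches the paper's proof: check the three squares separately, with the real content in the moment-map square, established by the character identity and re-indexing along the bijection $r^*:(z'P')\cap M'\to(zP)\cap M$.

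One imprecision to fix: the intermediate equality $\chi^{m'}(r_*(x))=x(r^*(m'))$ is not valid as stated. In the paper's conventions, on the chart $X_\sigma$ corresponding to a vertex $m_i$ of $zP$ one has $\chi^{m}(x)=x(m-m_i)$, and for a general $m'\in(z'P')\cap M'$ the element $r^*(m')$ need not lie in $\sigma^\vee\cap M$, so $x(r^*(m'))$ is not even defined. What does hold on the nose is $\chi^{m'}(r_*(x))=\chi^{r^*(m')}(x)$, and the paper verifies this by tracking the vertex shift: since $r^*$ carries $z'P'$ onto $zP$, it sends the vertex $m_j'$ dual to $r_*\sigma$ to the vertex $m_i$ dual to $\sigma$, whence $r_*(x)(m'-m_j')=x(r^*(m')-m_i)$. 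With that correction in place, your substitution and re-indexing argument is exactly the paper's.
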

\begin{proof}
  For the left square, suppose that $u\in S_N=\mathrm{Hom}_{sg}(M,S^1)$  and $x\in (X_\sigma)_{\ge}$ for some $\sigma\in\Sigma_P$, and $r_*(\sigma)$ is equal to a cone $\sigma'\in \Sigma_{P'}$, then $(u,x)$ gives out a point $ux$ of $X_\sigma$ by 
  $$ux(m)=u(m)\cdot x(m)$$
  for $m\in \sigma^{\vee}\cap M$; and $r_*(ux)$ gives out a point in $X_{\sigma'}$ by
  $$r_*(ux)(m')=ux(r^*(m'))=u(r^*(m'))\cdot x(r^*(m'))=r_*u(m')\cdot r_*x(m')$$
  for  $m'\in (\sigma')^{\vee}\cap M'$, while the rightmost item is given by $(r_*u,r_*x)$.
  
  For the middle square, suppose that $r_*$ sends a maximal cone $\tau\in\Sigma_P$ to $\tau'\in\Sigma_{P'}$ with corresponding vertices $m_i$ of $zP$ and $m_j'$ of $z'P'$ respectively. Let $y$ be in $X_\tau$, then 
  \begin{align*}
    r^*\mu'r_*(y) &= \frac{1}{\sum\limits_{m'\in (z'P')\cap M'}|\chi^{m'}(r_*(y))|}\sum\limits_{m'\in(z'P')\cap M'}|\chi^{m'}(r_*(y))|r^*(m')\\
    &= \frac{1}{\sum\limits_{m'\in (z'P')\cap M'}|\chi^{m'}(r_*(y))|}\sum\limits_{m'\in(z'P')\cap M'}|r_*(y)(m'-m_j')|r^*(m')\\
    &= \frac{1}{\sum\limits_{m'\in (z'P')\cap M'}|\chi^{m'}(r_*(y))|}\sum\limits_{m'\in(z'P')\cap M'}|y(r^*(m')-m_i)|r^*(m')\\
    & = \frac{1}{\sum\limits_{m'\in (z'P')\cap M'}|\chi^{r^*(m')}(y)|}\sum\limits_{m'\in (z'P')\cap M'}|\chi^{r^*(m')}(y)|r^*(m')\\
    &= \frac{1}{\sum\limits_{m\in (zP)\cap M}|\chi^{m}(y)|}\sum\limits_{m\in (zP)\cap M}|\chi^{m}(y)|m=\mu(y),
  \end{align*}
  where the fifth equality follows from that $(z'P')\cap M'\xrightarrow[\cong]{r^*} (zP)\cap M$.

  The right square square is clearly commutative.
\end{proof}
Consequently, one gets the following diagram
$$
\begin{tikzcd}
  (X_P)_{\ge}\arrow[r,"\tilde{\mu}"]\arrow[d,"r_*"'] & zP\arrow[d,"(r^*)^{-1}"]\arrow[r,"\frac{1}{z}"] & P \arrow[d,"\frac{z}{z'}(r^*)^{-1}"] \\
  (X_{P'})_{\ge}\arrow[r,"\tilde{\mu}'"'] & z'P'\arrow[r,"\frac{1}{z'}"'] & P'
\end{tikzcd}
$$
is commutative. Therefore, we can say the homeomorphism $X_P\cong \left(S_N\times P\right)/\sim$ is natural, in the sense that the following diagram commutes.
\begin{equation}\label{natural model}
  \begin{tikzcd}
    \left(S_N\times P\right)/\sim\arrow[r,"\cong"]\arrow[d,"r_*\times \frac{z}{z'}(r^*)^{-1}"'] & X_P\arrow[d,"r_*"] \\
    \left(S_N\times P'\right)/\sim\arrow[r,"\cong"'] & X_{P'}
  \end{tikzcd}
\end{equation}

\begin{exam}\label{eg:sapces of A1}
  Let $P$ be the interval $[-1,1]$ in $\mathbb{R}$, $P'$ be the interval $[0,1]$. Then 
  \begin{align*}
    X_P=\left(S^1\times [-1,1]\right)/\sim
  \end{align*}
  where $\left(e^{2\pi\sqrt{-1}\theta_1},-1\right)\sim \left(e^{2\pi\sqrt{-1}\theta_2},-1\right)$, $\left(e^{2\pi\sqrt{-1}\theta_1},1\right)\sim \left(e^{2\pi\sqrt{-1}\theta_2},1\right)$ for all $\theta_1,\theta_2\in\mathbb{R}$. Similarly, 
  \begin{align*}
    X_{P'}=\left(S^1\times [0,1]\right)/\sim
  \end{align*}
  where $\left(e^{2\pi\sqrt{-1}\theta_1},0\right)\sim \left(e^{2\pi\sqrt{-1}\theta_2},0\right)$, $\left(e^{2\pi\sqrt{-1}\theta_1},1\right)\sim \left(e^{2\pi\sqrt{-1}\theta_2},1\right)$ for all $\theta_1,\theta_2\in\mathbb{R}$.
 
  The two  spaces are decipted in Figure \ref{space of A1}.
\end{exam}

\section{Root systems and Weyl polytopes}

We mainly follow \cite{bourbaki2008lie} and \cite{humphreys1992reflection} for facts of root systems.

Let $V$ be an $n$-dimensional real Euclidean space with an inner product $\left\langle-,-\right\rangle$, $R$ be a reduced crystallographic \textbf{root system} of rank $n$, $\Delta:=\{\alpha_1,\alpha_2,\cdots,\alpha_n\}$ be a fixed system of \textbf{simple roots}.
 The  vectors $\omega_1^\vee,\omega_2^\vee,\cdots,\omega_n^\vee$ satisfying $\left\langle\omega_i^\vee,\alpha_j\right\rangle=\delta_{ij}$ are called the \textbf{fundamental coweights}. The vectors $\alpha_1^\vee,\alpha_2^\vee,\cdots,\alpha_n^\vee$ satisfying $\alpha_i^\vee=\frac{2\alpha_i}{\left\langle\alpha_i,\alpha_i\right\rangle}$  are called \textbf{simple coroots}. The vectors $\omega_1,\omega_2,\cdots,\omega_n$ satisfying  $\left\langle\alpha_i^\vee,\omega_j\right\rangle=\delta_{ij}$ are called \textbf{fundamental weights}.

Each root $\alpha\in R$ can be written as a linear combination of simple roots with integral coefficients of the same sign. In this case, a root is called \textbf{positive} if all coefficients are non-negative, or \textbf{negative} if all coefficients are non-positive. Let $R^+$ be the set of all positive roots, $R^-$ be the set of all negative roots, then $R$ is the disjoint union of $R^+$ and $R^-$.

Each root $\alpha\in R$ determines a reflection $s_{\alpha}\in O(V)$ defined by $s_{\alpha}(x)=x-\left\langle x,\alpha\right\rangle\alpha^{\vee}$, and all these reflections $s_{\alpha}$'s generate a finite group $W$, called the \textbf{Weyl group}. Moreover, $W$ is generated by simple reflections $s_i:=s_{\alpha_i}$ for $i\in [n]:=\{1,2,\cdots,n\}$.

Let $I$ be a subset of $[n]$, one can consider the simple sub-system $\Delta_I:=\{\alpha_i\}_{i\in I}$, the root sub-system $R_I\subset R$ generated by $\Delta_I$, and $W_I$ the corresponding Weyl group. $W_I$ is called a \textbf{parabolic subgroup} of $W$.

Associated to each root $\alpha\in R$ there are affine hyperplanes $H_{\alpha,k}:=\{x\in V| \left\langle x,\alpha\right\rangle=k\}$ for $k\in\mathbb{Z}$. Each connected component of $V\setminus\mathop{\cup}\limits_{\alpha\in R,k\in\mathbb{N}}H_{\alpha,k}$ is called an \textbf{alcove}. Each connected component of $V\setminus\mathop{\cup}\limits_{\alpha\in R}H_{\alpha,0}$ is called an \textbf{chamber}.
 Pick out one certain alcove $A$ and one certain chamber $C$, given by
\begin{align*}
  A:&=\{x\in V|\, 0<\left\langle x,\alpha\right\rangle<1,\, \forall \alpha\in R^+\},\\
  C:&=\{x\in V|\, \left\langle x,\alpha_i\right\rangle> 0,\, \forall i\in [n]\},
\end{align*}
called the \textbf{fundamental alcove} and the \textbf{fundamental chamber} respectively. 
One can identify the closure $\bar{C}$ of the fundamental chamber $C$ with the quotient space $V/W$, which follows from the result below:

\begin{thm}[{\cite[Lemma 1.12 and Theorem 1.12]{humphreys1992reflection}}]\label{Wconjugate}\label{isotropy}
 $\bar{C}$ is the \textbf{fundamental domain for the $W$-action on $V$}, that is, each $y\in V$ lies in $W(x)$ for some unique $x\in \bar{C}$.
 And $x-y$ is a non-negative $\mathbb{R}$-linear combination of simple roots. 
 Furthermore, the isotropy group group of $x$, i.e. $\{s\in W:\,s(x)=x\}$, is generated by those simple reflections that fix $x$.
\end{thm}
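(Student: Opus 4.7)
The plan is to reduce all three assertions to inductions on the length function $\ell$ of $W$, using two standard black-box facts from the theory of Coxeter groups: $\ell(ws_i)>\ell(w)\iff w(\alpha_i)\in R^+$, and every positive root is a non-negative integer combination of the simple roots. I would not reprove these; everything else is bookkeeping of signs.

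For existence, I would fix a strictly dominant vector $\rho:=\sum_{i}\omega_i$, so that $\langle\rho,\alpha_i^\vee\rangle>0$ for each $i$, pick $x\in W(y)$ maximizing $\langle\rho,\cdot\rangle$ on the finite orbit, and observe that $\langle\rho,s_ix\rangle\le\langle\rho,x\rangle$ together with $s_ix=x-\langle x,\alpha_i\rangle\alpha_i^\vee$ forces $\langle x,\alpha_i\rangle\ge 0$ for every $i$, i.e. $x\in\bar{C}$.

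For uniqueness and the isotropy description simultaneously, I would prove by induction on $\ell(w)$ the joint claim: if $\lambda,\mu\in\bar{C}$ and $w\lambda=\mu$, then $\lambda=\mu$ and $w$ lies in the subgroup generated by those simple reflections that fix $\lambda$. When $\ell(w)>0$, pick a simple root $\alpha_i$ with $\ell(s_iw)<\ell(w)$, so $w^{-1}\alpha_i\in R^-$; pairing with $\lambda$ gives $\langle\mu,\alpha_i\rangle=\langle\lambda,w^{-1}\alpha_i\rangle\le 0$, while $\mu\in\bar{C}$ forces $\langle\mu,\alpha_i\rangle\ge 0$. Hence $s_i\mu=\mu$, and the induction hypothesis applied to $s_iw$ finishes the step since $s_i$ is itself an allowed reflection. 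Specializing $\lambda=\mu$ yields the isotropy claim, and applying the claim to two candidates in $\bar{C}\cap W(y)$ yields uniqueness. For the displacement formula $x-y\in\sum_{i}\mathbb{R}_{\ge 0}\alpha_i$, write $y=w^{-1}x$, decompose $w=w's_i$ with $\ell(w')<\ell(w)$, set $y':=s_iy$, and use the complementary sign fact $w'(\alpha_i)\in R^+$ (from $\ell(w's_i)>\ell(w')$) to conclude $\langle y,\alpha_i\rangle=-\langle x,w'(\alpha_i)\rangle\le 0$, so that $y'-y=-\langle y,\alpha_i\rangle\alpha_i^\vee$ is a non-negative multiple of $\alpha_i$; combined with the inductive hypothesis applied to $w'y'=x$, this gives $x-y\in\sum_{i}\mathbb{R}_{\ge 0}\alpha_i$.

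The main obstacle is not any individual calculation but choosing which side of length descent to use in each induction: the uniqueness/isotropy argument needs $\ell(s_iw)<\ell(w)$ so that $w^{-1}\alpha_i$ is negative and the sign of $\langle\mu,\alpha_i\rangle$ is pinched to zero, whereas the displacement argument needs the opposite descent $\ell(w's_i)>\ell(w')$ so that $w'(\alpha_i)$ is positive and $\langle y,\alpha_i\rangle$ is non-positive. Once the correct side is selected, the inductive propagation is essentially mechanical, and together the three steps yield the full theorem.
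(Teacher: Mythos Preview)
The paper does not supply its own proof of this theorem: it is stated with a direct citation to Humphreys and no argument is given. Your proposal is correct and is essentially the classical proof appearing in the cited reference, so there is nothing to compare beyond noting that your write-up matches the source the paper defers to.
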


\begin{exam}\label{exam A2}
  One can consider the root system $R$ of type $A_2$ lying in $V:=\{x=(x_1,x_2,x_3)\in\mathbb{R}^3:x_1+x_2+x_3=0\}$, with simple coroots $\alpha_1^{\vee}=(1,-1,0),\,\alpha_2^{\vee}=(0,1,-1)$
  and the Weyl group $W\cong S_3$, generated by reflections permuting coordinates of $\mathbb{R}^3$. 
  
  The data about $A_2$ is decipted in the following picture,
  where the black vectors indicate coroots, the red vectors indicate fundamental coweights, the dashed blue lines indicate $\{H_{\alpha,k}\}_{\alpha\in R, k\in\mathbb{N}}$, the gray area indicates $C$, the orange area indicates $A$. 
  \begin{figure}[H]
    \centering
    \begin{tikzpicture}[scale=0.6]
      \newdimen\R
      \R=3cm

      \fill[fill=gray,opacity=0.2] (0:0) -- (30:{1.2*\R}) -- (90:{1.2*\R}) -- cycle;
        \fill[fill=orange,opacity=0.6] (0:0) -- (30:{(1/3)*sqrt(3)*\R}) -- (90:{(1/3)*sqrt(3)*\R}) -- cycle;
      \foreach \x in {60,120,...,360} {\draw[thick,->] (0:0) -- (\x:\R);}
      \foreach \x in {30,90} {\draw[very thick,->,red] (0:0) -- (\x:{(1/3)*sqrt(3)*\R});}
      \node[right] at (30:{(1/3)*sqrt(3)*\R}) {{\color{red}$\omega_2^{\vee}$}};
      \node[left] at (90:{(1/3)*sqrt(3)*\R}) {{\color{red}$\omega_1^{\vee}$}};
      \foreach \x/\l/\p in
        { 60/{$\alpha_1^{\vee}+\alpha_2^{\vee}$}/right,
         120/{$\alpha_1^{\vee}$}/above,
         180/{$-\alpha_2^{\vee}$}/left,
         240/{$-\alpha_1^{\vee}-\alpha_2^{\vee}$}/left,
         300/{$-\alpha_1^{\vee}$}/below,
         360/{$\alpha_2^{\vee}$}/right
        }
        \node[label={\p:\l}] at (\x:\R) {};
        
        \foreach \x in {-60,0,60} {
          \foreach \y in {-3,-1.5,0,1.5,3} {
            \begin{scope}[shift=(\x:\y cm)]
              \draw[dashed,blue,opacity=0.8] (90+\x:2.5cm)--(90+\x:-2.5cm);
            \end{scope}
          }
        }
        
   \end{tikzpicture}
  \end{figure}
\end{exam}

\begin{thm}[{\cite[p.\,90]{humphreys1992reflection}}]\label{alcove simplex}
  When the root system $R$ is \textbf{irreducible}, i.e. not a disjoint union of proper subsets orthogonal to each other, $R$ has a \textbf{highest root} $\tilde{\alpha}=\sum_{i\in [n]}c_i\alpha_i$ with all $c_i$'s positive, the fundamental alcove $A$ is equal to
  $\{x\in V| \left\langle x,\tilde{\alpha}\right\rangle< 1,\,\left\langle x,\alpha_i\right\rangle> 0,\,\forall i\in [n]\}.$ Consequently the closure $\bar{A}$ of $A$ is an $n$-simplex with vertices $\frac{1}{c_i}\omega_i^{\vee}$ and $0$.
\end{thm}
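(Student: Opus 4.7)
The statement has two parts: a simplified defining set of inequalities for $A$, and the identification of $\bar A$ as an $n$-simplex. My plan is to reduce both to one key dominance property of the highest root $\tilde\alpha$. Starting from the original definition, observe that every $\alpha\in R^+$ is a $\mathbb{Z}_{\ge 0}$-combination of simple roots, so on the closed fundamental chamber $\bar C$ we have $\langle x,\alpha\rangle\ge 0$, strictly so when $x\in C$. This at once collapses all $|R^+|$ lower bounds in $A=\{x:0<\langle x,\alpha\rangle<1,\,\forall\alpha\in R^+\}$ down to the $n$ simple ones $\langle x,\alpha_i\rangle>0$. For the upper bounds, the crucial claim is that $\tilde\alpha-\alpha\in\mathbb{Z}_{\ge 0}\Delta$ for every $\alpha\in R^+$; granting this, we get $\langle x,\alpha\rangle\le\langle x,\tilde\alpha\rangle$ on $\bar C$, so the single inequality $\langle x,\tilde\alpha\rangle<1$ subsumes all the others, yielding the alternative description of $A$.

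The dominance claim is the main obstacle, and I would prove it in the standard way. Put the partial order $\beta\preceq\gamma\Leftrightarrow\gamma-\beta\in\mathbb{Z}_{\ge 0}\Delta$ on $R^+$ and consider any $\preceq$-maximal element $\beta$. If $\langle\beta,\alpha_i\rangle<0$ for some simple $\alpha_i$, then the $\alpha_i$-root string through $\beta$ contains $\beta+\alpha_i\in R^+$, contradicting maximality; hence $\beta$ is dominant. Irreducibility of $R$ forces the support of such a $\beta$ to be all of $\Delta$ (otherwise the Dynkin diagram disconnects across the boundary of the support), and comparing inner products shows that two distinct dominant positive roots with full support cannot both be maximal. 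The unique $\preceq$-maximum must then coincide with $\tilde\alpha$.

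For the simplex assertion, $\bar A$ is cut out by exactly $n+1$ closed half-spaces in an $n$-dimensional affine space, so it suffices to verify that these inequalities are in general position and to exhibit the $n+1$ resulting vertices. The origin is the unique common zero of the $n$ simple roots. For each $i\in[n]$, the point $\frac{1}{c_i}\omega_i^\vee$ satisfies $\langle x,\alpha_j\rangle=\delta_{ij}/c_i$ by the duality $\langle\omega_i^\vee,\alpha_j\rangle=\delta_{ij}$, and $\langle x,\tilde\alpha\rangle=\frac{1}{c_i}\sum_j c_j\delta_{ij}=1$, hence lies on precisely the $n$ walls other than $\{\langle x,\alpha_i\rangle=0\}$. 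Since $\{\omega_j^\vee\}$ is a basis of $V$, these $n+1$ points are affinely independent and lie in $\bar A$; since $\bar A$ is bounded (the inequality $\langle x,\tilde\alpha\rangle=\sum_i c_i\langle x,\alpha_i\rangle\le 1$ on $\bar C$ bounds each $\langle x,\alpha_i\rangle$ between $0$ and $1/c_i$), it coincides with their convex hull, i.e.\ is the claimed $n$-simplex.
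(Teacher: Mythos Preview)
The paper does not give its own proof of this statement; it is quoted as a known result with a citation to Humphreys, \emph{Reflection Groups and Coxeter Groups}, p.~90. So there is no in-paper argument to compare against.

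Your proposal is a correct outline of the standard proof one finds in Humphreys (Lemma~4.7 and \S4.3 there). The reduction of the lower inequalities to the simple ones and of the upper inequalities to the single one for $\tilde\alpha$ is exactly the right mechanism. Your sketch of existence and uniqueness of the highest root is also standard; the only place I would ask you to be more explicit is the uniqueness step: spell out that if $\beta,\gamma$ are both $\preceq$-maximal dominant positive roots with full support, then $\langle\beta,\gamma\rangle=\sum_i c_i\langle\beta,\alpha_i\rangle>0$ (full support plus dominance forces strict positivity since $\beta\neq 0$), hence $\beta-\gamma$ is a root, and either sign contradicts one maximality. The simplex identification is fine: $n+1$ affine half-spaces in general position cutting out a bounded region give an $n$-simplex, and you have exhibited the $n+1$ vertices and checked affine independence.
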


Denote by $\mathcal{P},\mathcal{Q},\mathcal{P}^\vee,\mathcal{Q}^\vee$ the lattices generated by fundamental weights, simple roots, fundamental coweights, simple coroots respectively. 
One can see that $\mathcal{P}^\vee$ is dual to $\mathcal{Q}$, $\mathcal{P}$ is dual to $\mathcal{Q}^\vee$, $\mathcal{Q}\subset \mathcal{P}, \mathcal{Q}^\vee\subset \mathcal{P}^\vee$ and $[\mathcal{P}:\mathcal{Q}]=[\mathcal{P}^\vee:\mathcal{Q}^\vee]$. The Weyl group $W$ acts on $R$ by sending a root to another, consequently $W$ acts on $\mathcal{P},\mathcal{Q},\mathcal{P}^\vee$ and $\mathcal{Q}^\vee$ respectively.

$V$ acts on itself by translation. Hence $V$ admits actions of the \textbf{affine Weyl group} $\mathcal{Q}^{\vee}\rtimes W$ and the \textbf{extended affine Weyl goup} $\mathcal{P}^{\vee}\rtimes W$. 
More precisely, the action of $\mathcal{P}^{\vee}\rtimes W$ on $V$ is given by 
\begin{align*}
  \left(\mathcal{P}^{\vee}\rtimes W\right)\times V &\to V\\
  \left((p,w),v\right)&\mapsto p+w(v).
\end{align*}
As a subgroup of $\mathcal{P}^{\vee}\rtimes W$, $\mathcal{Q}^{\vee}\rtimes W$ inherits the action on $V$.

\begin{thm}[{\cite[p.\,186]{bourbaki2008lie}}]\label{quotient under Q}
  The closure $\bar{A}$ of $A$ is a fundamental domain for the action of $\mathcal{Q}^{\vee}\rtimes W$ on $V$, and hence
    the quotient space $\frac{V}{\mathcal{Q}^{\vee}\rtimes W}$ is homeomorphic to $\bar{A}$.
\end{thm}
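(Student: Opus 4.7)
The plan is to establish two facts: every $\mathcal{Q}^{\vee}\rtimes W$-orbit on $V$ meets $\bar{A}$, and no two distinct points of $\bar{A}$ lie in the same orbit; the homeomorphism $V/(\mathcal{Q}^{\vee}\rtimes W)\cong\bar{A}$ will then follow formally. I would first reduce to the case where $R$ is irreducible, since an orthogonal decomposition $R=R_1\sqcup R_2$ splits $V=V_1\oplus V_2$, $\mathcal{Q}^{\vee}=\mathcal{Q}_1^{\vee}\oplus\mathcal{Q}_2^{\vee}$, $W=W_1\times W_2$, and $\bar{A}=\bar{A}_1\times\bar{A}_2$. Assuming $R$ irreducible, Theorem \ref{alcove simplex} identifies $\bar{A}$ as the $n$-simplex cut out by $\left\langle x,\alpha_i\right\rangle\ge 0$ for $i\in[n]$ and $\left\langle x,\tilde{\alpha}\right\rangle\le 1$, whose walls lie in $H_{\alpha_i,0}$ and $H_{\tilde{\alpha},1}$. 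The simple reflections $s_1,\ldots,s_n\in W$ and the affine reflection $s_0$ in $H_{\tilde{\alpha},1}$ all lie in $\mathcal{Q}^{\vee}\rtimes W$, since $s_0$ is the composition of $s_{\tilde{\alpha}}\in W$ with translation by $\tilde{\alpha}^{\vee}\in\mathcal{Q}^{\vee}$.

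For existence, I would fix $x_0\in A$ and, given $v\in V$, pick an orbit element $v'=gv$ minimizing $\|v'-x_0\|$; such a minimum exists because the orbit is discrete ($\mathcal{Q}^{\vee}$ is a lattice and $W$ is finite). If $v'\notin\bar{A}$, then some defining inequality of $\bar A$ fails strictly at $v'$, while $x_0$ satisfies the opposite strict inequality, so the corresponding wall separates the two points; the reflection $s_i\in\{s_0,s_1,\ldots,s_n\}$ across that wall then sends $v'$ strictly closer to $x_0$, contradicting minimality. Hence $v'\in\bar{A}$.

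The main obstacle is uniqueness: showing that if $v,gv\in\bar{A}$ for some $g\in\mathcal{Q}^{\vee}\rtimes W$, then $gv=v$. I would first verify that $s_0,\ldots,s_n$ generate the whole group $\mathcal{Q}^{\vee}\rtimes W$: they contain $W$ and, through the product of $s_0$ with $s_{\tilde{\alpha}}$, the translation by $\tilde{\alpha}^{\vee}$, so $W$-conjugation yields translations by every $W$-orbit member of $\tilde{\alpha}^{\vee}$, and the classical fact that $\mathcal{Q}^{\vee}$ is generated as a $\mathbb{Z}[W]$-module by the highest short coroots completes this step. Then I would induct on the minimal word length of $g$ in $s_0,\ldots,s_n$: writing $g=s_i g'$ with a reduced expression and $H_i$ the wall of $\bar{A}$ fixed by $s_i$, the conditions $v,gv\in\bar{A}$ combined with reducedness force $v\in H_i$, so $gv=s_i g'v=g'v$ and induction applies to $g'$. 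The technical heart here is the alcove/gallery argument from the theory of affine reflection groups, which shows $(\mathcal{Q}^{\vee}\rtimes W,\{s_0,\ldots,s_n\})$ is a Coxeter system acting simply transitively on the alcoves of the arrangement $\{H_{\alpha,k}\}$; I would import this standard fact rather than redevelop it. Given both properties, the continuous bijection $\bar{A}\to V/(\mathcal{Q}^{\vee}\rtimes W)$ between compact Hausdorff spaces — the quotient is compact since $V/\mathcal{Q}^{\vee}$ is a torus and $W$ is finite, and Hausdorff because the action is properly discontinuous — is automatically a homeomorphism.
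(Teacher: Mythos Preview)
The paper does not give its own proof of this statement; it simply cites Bourbaki. Your sketch is correct and is essentially the classical argument one finds in Bourbaki (Ch.~VI, \S2) or Humphreys (\S4.3--4.8): reduce to the irreducible case, use the affine simple reflections $s_0,s_1,\ldots,s_n$ in the walls of $\bar A$, prove existence by the nearest-point/reflection trick, and deduce uniqueness from the Coxeter structure of the affine Weyl group (simple transitivity on alcoves). Two minor comments: your phrase ``highest short coroots'' is imprecise---what you need is that the $W$-orbit of $\tilde\alpha^{\vee}$ (the short coroots, since $\tilde\alpha$ is long) spans $\mathcal Q^{\vee}$, which is the standard fact that short roots of any irreducible root system generate its root lattice, applied to $R^{\vee}$; and in your inductive uniqueness step the claim ``reducedness forces $v\in H_i$'' is exactly the point where one invokes the Coxeter/gallery machinery you say you would import, so that deferral is doing real work. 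With those understood, the proposal is sound and aligned with the reference the paper cites.
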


 From the following lemma, one can get that $\mathcal{Q}^{\vee}\rtimes W$ is a normal subgroup of $\mathcal{P}^{\vee}\rtimes W$.

\begin{lem}\label{sublattice}
  If there are lattice inclusions $\mathcal{Q}^{\vee}\subset \mathcal{S}^{\vee}\subset \mathcal{P}^{\vee}$ and $\mathcal{S}^{\vee}$ is invariant under $W$-action, then $\mathcal{S}^{\vee}\rtimes W$ is a normal subgroup of $\mathcal{P}^{\vee}\rtimes W$.
\end{lem}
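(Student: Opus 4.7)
The plan is to verify normality directly by computing a conjugate. The semidirect product $\mathcal{P}^{\vee}\rtimes W$ carries the group law $(p_1,w_1)(p_2,w_2)=(p_1+w_1(p_2),\,w_1w_2)$, with $(p,w)^{-1}=(-w^{-1}(p),w^{-1})$. So for $(p,w)\in \mathcal{P}^{\vee}\rtimes W$ and $(s,u)\in \mathcal{S}^{\vee}\rtimes W$, a direct calculation gives
\[
(p,w)(s,u)(p,w)^{-1}=\bigl(w(s)+p-wuw^{-1}(p),\;wuw^{-1}\bigr).
\]
The second coordinate is automatically in $W$, so the whole task reduces to showing that the first coordinate lies in $\mathcal{S}^{\vee}$.

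Since $\mathcal{S}^{\vee}$ is assumed $W$-invariant, $w(s)\in\mathcal{S}^{\vee}$. Hence it suffices to prove the following key claim: for every $p\in\mathcal{P}^{\vee}$ and every $v\in W$, the difference $p-v(p)$ lies in $\mathcal{Q}^{\vee}$ (which in turn sits inside $\mathcal{S}^{\vee}$ by hypothesis). I would check this first for a simple reflection $s_i$: by the reflection formula, $p-s_i(p)=\langle p,\alpha_i\rangle\,\alpha_i^{\vee}$, and since $\mathcal{P}^{\vee}$ is dual to $\mathcal{Q}$ (as recalled just before the lemma), the pairing $\langle p,\alpha_i\rangle$ is an integer, so $p-s_i(p)\in\mathbb{Z}\alpha_i^{\vee}\subset\mathcal{Q}^{\vee}$.

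For a general $v\in W$, I would write $v=s_{i_1}s_{i_2}\cdots s_{i_k}$ and argue by induction on length using the telescoping identity
\[
p-v(p)=\bigl(p-s_{i_1}(p)\bigr)+s_{i_1}\bigl(p-s_{i_2}\cdots s_{i_k}(p)\bigr),
\]
together with the $W$-invariance of $\mathcal{Q}^{\vee}$, to conclude $p-v(p)\in\mathcal{Q}^{\vee}$. Applying this with $v=wuw^{-1}$ then yields $p-wuw^{-1}(p)\in\mathcal{Q}^{\vee}\subset\mathcal{S}^{\vee}$, so the conjugate lies in $\mathcal{S}^{\vee}\rtimes W$ as required.

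The argument is entirely formal; the only subtle point is the integrality $\langle p,\alpha_i\rangle\in\mathbb{Z}$, which is exactly the duality of the lattices $\mathcal{P}^{\vee}$ and $\mathcal{Q}$, so no real obstacle arises. The hypothesis $\mathcal{Q}^{\vee}\subset\mathcal{S}^{\vee}$ is used precisely to absorb the factor $p-v(p)$, and the $W$-invariance of $\mathcal{S}^{\vee}$ is used to absorb $w(s)$; both hypotheses are therefore essential and visibly used.
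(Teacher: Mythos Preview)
Your proof is correct and follows essentially the same approach as the paper: both compute an explicit conjugate in the semidirect product and reduce the question to the fact that $p-v(p)\in\mathcal{Q}^{\vee}$ for $p\in\mathcal{P}^{\vee}$ and $v\in W$. The only cosmetic differences are that the paper conjugates by $(q_2,w_2)^{-1}$ on the left rather than by $(p,w)$, and that the paper simply asserts $w_1(q_2)-q_2\in\mathcal{Q}^{\vee}$ ``by definition'' whereas you supply the standard induction via simple reflections; your added detail is a harmless elaboration of the same idea.
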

\begin{proof}
  Assume that $(q_1,w_1)\in \mathcal{S}^{\vee}\rtimes W$ and $(q_2,w_2)\in \mathcal{P}^{\vee}\rtimes W$, then 
  \begin{align*}
    (q_2,w_2)^{-1}(q_1,w_1)(q_2,w_2)=\left(w_2^{-1}\left(w_1(q_2)-q_2+q_1\right),w_2^{-1}w_1w_2\right).
  \end{align*}
  By definition, $w_1(q_2)-q_2$ lies in $\mathcal{Q}^{\vee}$ and hence $w_2^{-1}\left(w_1(q_2)-q_2+q_1\right)$ lies in $\mathcal{S}^{\vee}$.
\end{proof}

Note that $\mathcal{P}^{\vee}\rtimes W$ acts on the set $\{H_{\alpha,k}\}_{\alpha\in R,k\in\mathbb{N}}$, and hence on the set of alcoves. Let $G_A$ be the subgroup of $\mathcal{P}^{\vee}\rtimes W$
that maps $A$ to $A$ itself. 
By Theroem \ref{quotient under Q}, the affine Weyl group $\mathcal{Q}^{\vee}\rtimes W$ is simply transitive on the set of alcoves, one can see that 
$$\mathcal{P}^{\vee}\rtimes W=\left(\mathcal{Q}^{\vee}\rtimes W\right)\rtimes G_A$$
and then 
$$\frac{\mathcal{P}^{\vee}}{\mathcal{Q}^{\vee}}\cong \frac{\mathcal{P}^{\vee}\rtimes W}{\mathcal{Q}^{\vee}\rtimes W}\cong G_A,$$
from which we get the affine action of $\frac{\mathcal{P}^{\vee}}{\mathcal{Q}^{\vee}}$ on $\bar{A}$, and the homeomorphisms due to Theorem \ref{quotient under Q}
\begin{equation}\label{alcove quotient}
   \frac{V}{\mathcal{S}^{\vee}\rtimes W}\cong \frac{V}{\mathcal{Q}^{\vee}\rtimes W}\bigg/ \frac{\mathcal{S}^{\vee}\rtimes W}{\mathcal{Q}^{\vee}\rtimes W}  \cong \frac{\bar{A}}{\nicefrac{\mathcal{S}^{\vee}}{\mathcal{Q}^{\vee}}}.
\end{equation}
where $S^{\vee}$ is given in Lemma \ref{sublattice}.

\begin{rem}
  When $R$ is irreducible, the group $\frac{\mathcal{P}^{\vee}}{\mathcal{Q}^{\vee}}$ is finite, see \cite[p.\,189]{bourbaki2008lie}.
\end{rem}

Due to Theorem \ref{alcove simplex}, in general the closed fundamental alcove $\bar{A}$ is a product of simplices, and hence a polytope. The (local) contractibility of $\frac{\bar{A}}{\nicefrac{\mathcal{S}^{\vee}}{\mathcal{Q}^{\vee}}}$ is implied by the result below.

\begin{lem}\label{contractible}
  Let $P'\subset \mathbb{R}^{n}$ be a polytope of dimension $n$. There is a group $G$ acting on $\mathbb{R}^{n}$ as affine transformations such that $G$ acts on $P'$. Then the quotient $P'/G$ is contractible and locally contractible.
\end{lem}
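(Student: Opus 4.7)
The plan is to prove both statements by $G$-equivariant straight-line contractions, exploiting the key fact that any affine bijection preserving a compact convex body fixes its centroid. For global contractibility, let $c$ be the centroid of $P'$. Since every $g\in G$ is an affine bijection $P'\to P'$, it sends the centroid to the centroid, so $g(c)=c$. The straight-line homotopy
\[
  H\colon P'\times[0,1]\to P',\qquad H(p,t)=(1-t)p+tc,
\]
is well-defined by convexity of $P'$ and is $G$-equivariant, because
\[
  g(H(p,t))=(1-t)g(p)+tg(c)=(1-t)g(p)+tc=H(g(p),t).
\]
Hence $H$ descends to a strong deformation retraction of $P'/G$ onto $[c]$.

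For local contractibility at a point $[x]\in P'/G$ contained in a preassigned open neighborhood $U$, I would repeat the construction centered at $x$ on a small $G_x$-invariant convex neighborhood. Since $G$ preserves the bounded set $P'$, the image of $G$ in the affine group of $\mathbb{R}^n$ has compact closure; replacing $G$ by this closure changes neither orbits nor the quotient, so I may assume $G$ is compact. Averaging any inner product over $G$ produces a $G$-invariant Euclidean norm $\|\cdot\|$ on $\mathbb{R}^n$. For $\epsilon>0$ set
\[
  B_\epsilon:=\{p\in P':\|p-x\|<\epsilon\},
\]
which is convex, open in $P'$, star-shaped at $x$, and $G_x$-invariant. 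The straight-line contraction $(p,t)\mapsto(1-t)p+tx$ is $G_x$-equivariant and descends to a contraction of $B_\epsilon/G_x$ to $[x]$. For $\epsilon$ small enough the canonical surjection $B_\epsilon/G_x\to\pi(B_\epsilon)\subset P'/G$ is a homeomorphism onto an open neighborhood of $[x]$ contained in $U$, producing the desired contractible subneighborhood.

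The main obstacle is this last step: one must arrange $B_\epsilon$ to be a \emph{slice} at $x$, i.e.\ that distinct $G$-orbits meet $B_\epsilon$ in distinct $G_x$-orbits. In the intended application $G$ is the finite group $\mathcal{S}^\vee/\mathcal{Q}^\vee$ acting on $\bar A$, and the slice property is immediate from taking $\epsilon<\tfrac{1}{2}\min\{\|gx-x\|:g\in G\setminus G_x\}$ and using $G$-invariance of $\|\cdot\|$ to see that $B_\epsilon$ and $gB_\epsilon$ are disjoint for $g\notin G_x$. For arbitrary compact $G$ this is the standard tube/slice theorem for compact group actions, which I would either cite or establish by essentially the same finite-orbit argument applied to the quotient $G/G_x$.
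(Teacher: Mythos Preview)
Your proof is correct and takes essentially the same approach as the paper: contract $P'$ to its centroid via the $G$-equivariant straight-line homotopy for global contractibility, and for local contractibility invoke the slice theorem (the paper cites Bredon, Theorem~II.4.4) to identify a neighborhood of $[x]$ with $V/G_x$ for a $G_x$-invariant convex neighborhood $V\ni x$, then contract linearly to $x$. The only cosmetic difference is in how the $G_x$-invariant convex neighborhood is produced---the paper takes $\bigcap_{g\in G_x}g(U')$ for a small convex ball $U'$, whereas you take a ball in a $G$-invariant metric.
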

\begin{proof}
Let $c$ be the center of mass of $P'$, formally $c$ is the  average of vertices of $P'$. Clearly $c$ is fixed by $G$.  
The map 
\begin{align*}
  H: P'\times [0,1]&\to P' \\
  \left(v,t\right)&\mapsto c+t(v-c)
\end{align*}
is $G$-equivariant with $G$ acting trivially on the unit interval $[0,1]$. $H$ is a deformation retraction of $P'$ onto $\{c\}$, it induces a homotopy 
$$H/G:\left(P'/G\right)\times [0,1]\to P'/G,$$ wihch is a deformation retraction of $P'/G$ onto $\{c\}$. Thus we get that $P'/G$ is contractible. 

Apply the slice theorem of general topology \cite[Theorem II.4.4]{bredon1972introduction}, any point $x\in P'$ has a $G_x$-invariant open neighborhhod $U\subset P'$ such that $G(U)/G\cong U/G_x$ is an open neighborhhod of $G(x)/G$ in $P'/G$. The open neighborhhod $U$ contains a neighborhhod $U'$ of $x$ that is the intersection of $P'$ and an open convex ball of $\mathbb{R}^{n}$. Then $\widetilde{U}:=\mathop{\cap}\limits_{g\in G_x}g\left(U'\right)$
is a $G_x$-invariant convex open neighborhhod  of $x$ in $P'$.
Analogously, the map
\begin{align*}
  \widetilde{U}\times [0,1]&\to \widetilde{U}\\
  (v,t)&\mapsto x+t(v-x) 
\end{align*}
induces a deformation retraction of $\widetilde{U}/G_x\cong G(\widetilde{U})/G$ onto $\{x\}$. Therefore $P'/G$ is locally contractible.
\end{proof}

\begin{rem}
  One can relate Lemma \ref{contractible} to Lie group theory.  Suppose that $G$ is a (connected compact) simple Lie group with a fixed maximal torus $T$ and a Weyl group $W$. Let $Inn(G)$ denote the inner automorphism group of $G$,
  one has homeomorphisms
  \begin{align*}
    G/Inn(G)\cong T/W \cong \frac{\bar{A}}{\mathcal{S}^{\vee}/\mathcal{Q}^{\vee}},
  \end{align*}
  where $\bar{A}, \mathcal{S}^{\vee},\mathcal{Q}^{\vee}$ are constructed according to $G$. The rightmost space is (locally) contractible due to Lemma \ref{contractible}, hence $G/Inn(G)$ and  $T/W$ are (locally) contractible. See \cite[p. 326]{bourbaki2005lie} for the homeomorphisms above.
  \end{rem}

Fix a point $a\in C$, the convex hull $P:={Conv}\left(W(a)\right)$ is a polytope, called the \textbf{Weyl polytope}. Say a face $F$ of the Weyl polytope $P$ is \textbf{dominant} if $\dim(F\cap C)=\dim F$. The following theorem shows face information of the polytope $P$.

\begin{thm}[{\cite[Proposition 3.2]{vinberg1991certain}}]\label{face of P}
  Every face of $P$ is $W$-conjugate to a unique dominant face. There is a one-to-one correspondence between the set of all dominant faces and the set of all subsets of $\Delta$, assigning to each subset $\Delta_I$ the face $F= {Conv}\left(W_I(a)\right)$ of dimension $|I|$.
\end{thm}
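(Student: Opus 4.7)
The plan is to parametrize the faces of $P$ by their supporting normals, use the $W$-action to reduce to dominant normals, and identify dominant faces with orbits of parabolic subgroups. Every face $F$ of $P$ is the set of points where some linear functional $\langle\cdot,u\rangle$ attains its maximum on $P$, for $u\in V$. By Theorem \ref{Wconjugate} each $W$-orbit in $V$ meets $\bar C$ in a unique point, so after replacing $F$ by a suitable $W$-translate I may assume the supporting normal $u_0$ lies in $\bar C$. Attach to $u_0$ the set $I=\{i\in[n]:\langle u_0,\alpha_i\rangle=0\}$; the isotropy clause of Theorem \ref{Wconjugate} says $\mathrm{Stab}_W(u_0)=W_I$.

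Next I would identify the face $F_{u_0}$ cut out by $u_0$ with $\mathrm{Conv}(W_I(a))$. Since the vertex set of $P$ is $W(a)$, $F_{u_0}$ is the convex hull of those $w(a)$ that maximize $\langle a,w^{-1}(u_0)\rangle$. Applying Theorem \ref{Wconjugate} to $w^{-1}(u_0)$ gives $u_0-w^{-1}(u_0)=\sum_j c_j\alpha_j$ with all $c_j\geq 0$, so
\[
\langle a,u_0\rangle-\langle a,w^{-1}(u_0)\rangle=\sum_j c_j\langle a,\alpha_j\rangle\geq 0,
\]
and since $a\in C$ forces $\langle a,\alpha_j\rangle>0$, equality holds iff every $c_j=0$, iff $w^{-1}(u_0)=u_0$, iff $w\in W_I$. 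Hence $F_{u_0}=\mathrm{Conv}(W_I(a))$. This face is dominant: because $a\in C\cap F_{u_0}$ and $C$ is open in $V$, a relative neighborhood of $a$ in $F_{u_0}$ is contained in $C$, so $\dim(F_{u_0}\cap C)=\dim F_{u_0}$.

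The dimension equals $|I|$: in the orthogonal decomposition $V=\mathrm{span}(\Delta_I)\oplus V^{W_I}$ write $a=a_I+a_I^{\perp}$; the $|I|$ vectors $s_i(a)-a=-\langle a,\alpha_i\rangle\alpha_i^{\vee}$ for $i\in I$ are linearly independent, since the simple coroots $\{\alpha_i^{\vee}\}_{i\in I}$ are and each coefficient $\langle a,\alpha_i\rangle$ is strictly positive. Hence the affine hull of $W_I(a)$ has dimension at least $|I|$, and being contained in $a+\mathrm{span}(\Delta_I)$ it has dimension exactly $|I|$. Uniqueness of $I$ now follows from identifying the setwise stabilizer of $F_{u_0}$ in $W$ with $W_I$: it contains $W_I$ because $W_I$ permutes $W_I(a)$, and conversely any $w\in W$ sending $F_{u_0}$ to itself permutes the vertices $W_I(a)$, so $w(a)\in W_I(a)$; since $a$ lies in the open chamber its $W$-stabilizer is trivial, forcing $w\in W_I$. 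Parabolic subgroups being in bijection with subsets of $\Delta$, this recovers $I$ from $F$.

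The main obstacle I expect is organizing the uniqueness statement cleanly, namely showing that two dominant faces in the same $W$-orbit must coincide. This does not follow from the vertex identification alone, but drops out once both faces are pinned to $\mathrm{Conv}(W_I(a))$ for the \emph{same} $I$, which in turn is forced by the stabilizer computation above. Once that bookkeeping is arranged, the dimension count and the existence half of the correspondence are essentially routine consequences of Theorem \ref{Wconjugate} together with the openness hypothesis $a\in C$, which is used in an essential way both to detect equality in the inequality for $c_j$'s and to guarantee that the coefficient vector $(\langle a,\alpha_i\rangle)_{i\in I}$ has no zero entries.
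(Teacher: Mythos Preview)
The paper does not give its own proof of this statement; it is quoted from Vinberg without argument. So there is nothing to compare against, and I assess your proposal on its own.

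Your parametrization by supporting normals, the identification $F_{u_0}=\mathrm{Conv}(W_I(a))$ via Theorem~\ref{Wconjugate}, the dominance of that face, the dimension count, and the setwise stabilizer computation are all correct. The gap is in the uniqueness claim. You show every face is $W$-conjugate to some $\mathrm{Conv}(W_I(a))$, and that the latter is dominant; but you never show that \emph{every} dominant face is literally equal to some $\mathrm{Conv}(W_I(a))$, i.e.\ contains $a$. Without this, ``pinning both faces to $\mathrm{Conv}(W_I(a))$ for the same $I$'' is vacuous: each dominant face is only known to be \emph{conjugate} to such a face, and being conjugate to the same $\mathrm{Conv}(W_I(a))$ does not make two faces equal (every vertex of $P$ is conjugate to $\{a\}$). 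Nor does the stabilizer computation force the same $I$: in type $A_2$ the subgroups $W_{\{1\}}$ and $W_{\{2\}}$ are conjugate in $W$, yet the edges $\mathrm{Conv}\{a,s_1(a)\}$ and $\mathrm{Conv}\{a,s_2(a)\}$ lie in different $W$-orbits.

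The missing step is short. If $F$ is dominant, pick $p\in\mathrm{relint}(F)\cap C$ and any $u$ in the relative interior of the normal cone of $F$, so that $p$ maximizes $\langle\cdot,u\rangle$ on $P$. If $u\notin\bar C$, say $\langle u,\alpha_i\rangle<0$, then $s_i(p)\in P$ satisfies $\langle s_i(p),u\rangle-\langle p,u\rangle=-\langle p,\alpha_i\rangle\,\langle\alpha_i^{\vee},u\rangle>0$ (both factors have the required sign since $p\in C$), contradicting maximality. Hence $u\in\bar C$, and your own computation gives $F=F_u=\mathrm{Conv}(W_I(a))$. With this in hand, uniqueness follows exactly from your stabilizer argument: if $w$ carries one dominant face $\mathrm{Conv}(W_I(a))$ to another $\mathrm{Conv}(W_{I'}(a))$, then $w(a)\in W_{I'}(a)$, so $w\in W_{I'}$ by triviality of $\mathrm{Stab}_W(a)$, whence the two faces coincide and $I=I'$.
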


\begin{coro}\label{P is simple}
  $P$ is a simple polytope. For any $i\in [n]$, the affine hyperplane $\{x\in V: \left\langle x-a,\omega_i^{\vee}\right\rangle=0\}$ supports a dominant facet of $P$.
\end{coro}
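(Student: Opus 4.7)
The plan is to read off both assertions directly from Vinberg's correspondence (Theorem \ref{face of P}) together with the description of $\bar{C}$ as a fundamental domain (Theorem \ref{isotropy}). Since $W$ acts on $P$ and every face is $W$-conjugate to a unique dominant face, the local facet structure at an arbitrary vertex agrees with that at the dominant vertex $a$, so it suffices to analyze $a$ and the $n$ dominant facets $F_i := \mathrm{Conv}(W_{I_i}(a))$ corresponding under the correspondence to $I_i := [n]\setminus\{i\}$.

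For the hyperplane assertion, I would first invoke Theorem \ref{isotropy} to write, for any $w \in W$, $a - w(a) = \sum_{k} c_k \alpha_k$ with all $c_k \ge 0$. Pairing with $\omega_i^\vee$ collapses this sum to $c_i$, because $\langle \alpha_k, \omega_i^\vee\rangle = \delta_{ki}$, so $\langle w(a) - a, \omega_i^\vee\rangle = -c_i \le 0$. By convexity this extends to $\langle x - a, \omega_i^\vee\rangle \le 0$ for every $x \in P$. When $w$ lies in the parabolic $W_{I_i}$, the difference $a - w(a)$ is supported on simple roots $\alpha_k$ with $k \in I_i$, so its pairing with $\omega_i^\vee$ vanishes and the entire vertex set $W_{I_i}(a)$ of $F_i$, hence $F_i$ itself, sits inside $H_i := \{x : \langle x - a, \omega_i^\vee\rangle = 0\}$. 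Since $F_i$ already has dimension $n-1$ by Theorem \ref{face of P}, $H_i$ is a supporting hyperplane realizing $F_i$ as a facet.

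For simplicity, since every vertex of $P$ is $W$-conjugate to $a$, it suffices to count the facets through $a$. The $n$ dominant facets $F_1, \ldots, F_n$ all contain $a$ (take $w = e \in W_{I_i}$), giving at least $n$ facets through $a$. Conversely, any facet through $a$ has the form $w(F_i)$ for some $w \in W$ and some $i$, whence $w^{-1}(a)$ is a vertex of $F_i$ and so lies in $W_{I_i}(a)$; because $a \in C$ has trivial $W$-stabilizer by Theorem \ref{isotropy} (no simple reflection can fix a strictly dominant point), this forces $w \in W_{I_i}$ and therefore $w(F_i) = F_i$. Hence exactly $n$ facets pass through $a$, and $P$ is simple.

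The main obstacle I anticipate is making sure the $n$ dominant facets through $a$ are genuinely distinct and that no extra facet through $a$ has been missed; both are handled cleanly by the triviality of the $W$-stabilizer of $a$ together with Vinberg's bijection between dominant faces and subsets of $\Delta$.
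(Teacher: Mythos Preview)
Your proposal is correct and follows essentially the same strategy as the paper: use Vinberg's correspondence (Theorem~\ref{face of P}) to reduce to the vertex $a$ and the $n$ dominant facets, then invoke the $W$-action to transport the conclusion to all other vertices.

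There is a minor difference in execution worth noting. For the supporting-hyperplane claim, the paper simply observes that the edge vectors $a - s_k(a) = \langle a,\alpha_k^\vee\rangle\alpha_k$ for $k\neq i$ span the tangent direction of $F_i$ and are orthogonal to $\omega_i^\vee$, relying on Theorem~\ref{face of P} for the fact that $F_i$ is already a facet; you instead prove the global inequality $\langle x-a,\omega_i^\vee\rangle\le 0$ directly from the non-negativity statement in Theorem~\ref{isotropy}. For the facet count at $a$, the paper appeals (tersely) to the fact that a facet meeting the open chamber $C$ at $a$ is automatically dominant, whereas you argue via the triviality of the stabilizer of $a$ that $w(F_i)\ni a$ forces $w\in W_{[n]\setminus\{i\}}$. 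Both routes are standard and equivalent; yours is the more explicit of the two.
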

\begin{proof}
  As $i$ ranges among $[n]$, ${Conv}(W_{[n]\setminus\{i\}}a)$ is a facet of $P$, and its normal vector is $-\omega_i^{\vee}$ since $a-s_k(a)=\left\langle a,\alpha^\vee_k\right\rangle\alpha_k$ for $k\in [n]\setminus\{i\}$.

  By Theorem \ref{face of P}, the vertex $a$ is contained in only $n$ facets ${Conv}(W_{[n]\setminus\{i\}}a)$ for $i\in [n]$. And by symmetry, this determines facets containing any other vertex.
\end{proof}

Let $F_i$ denote the dominant facet ${Conv}\left(W_{[n]\setminus\{i\}}(a)\right)$ for $i\in [n]$. One can see that other facets of $P$ are given by $s(F_i)$ with the normal vector $s(-\omega_i^{\vee})$ for $i\in [n]$ and $s\in W$.

\begin{coro}\label{ConjuOfDomiFace}
  \begin{itemize}
    \item[$(a)$] For any $i\ne j$ in $[n]$, $\{s(F_i)\}_{s\in W}\bigcap\{r(F_j)\}_{r\in W}=\emptyset$.
    \item[$(b)$] For any $(k,s)\in [n]\times W$, $F_k\cap s(F_k)$ is either $F_i$ or $\emptyset$.
    \item[$(c)$] Define $\Lambda:=\mathop{\bigcup}\limits_{I\subset  [n]}\{I\}\times\frac{W}{W_{[n]\setminus I}}$, then there is a one-to-one correspondence between $\Lambda$ and the set of all proper faces of $P$, assigning $(I,s)$ to $s\left(\cap_{i\in I}F_i\right)$, where one should take $\cap_{i\in \emptyset}F_i$ as $P$.
  \end{itemize}
\end{coro}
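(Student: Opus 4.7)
The plan is to handle parts (a), (b), (c) in turn, each resting on Theorem \ref{face of P} (uniqueness of the dominant representative in each $W$-orbit of faces) together with the triviality of the isotropy group of $a \in C$ supplied by Theorem \ref{isotropy}.

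For (a), I would argue by contradiction: if $s(F_i)=r(F_j)$, then $(r^{-1}s)(F_i)=F_j$, placing the two dominant facets in a common $W$-orbit. By the uniqueness of the dominant representative, $F_i=F_j$, and since the correspondence $\Delta_J\leftrightarrow \mathrm{Conv}(W_J(a))$ in Theorem \ref{face of P} is a bijection, this forces $i=j$.

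For (b) (interpreting the $F_i$ on the right-hand side as $F_k$, which is the statement that seems consistent with the rest of the corollary), I would work at the level of vertex sets. The vertices of $F_k$ are $W_{[n]\setminus\{k\}}(a)$ and those of $s(F_k)$ are $sW_{[n]\setminus\{k\}}(a)$. A common vertex yields $w_1(a)=sw_2(a)$ with $w_1,w_2\in W_{[n]\setminus\{k\}}$, and the trivial isotropy of $a$ then forces $s=w_1w_2^{-1}\in W_{[n]\setminus\{k\}}$; since $W_{[n]\setminus\{k\}}$ stabilizes $F_k$ setwise, this gives $s(F_k)=F_k$. Otherwise $F_k$ and $s(F_k)$ share no vertex, and since any nonempty face of a convex polytope is the convex hull of its vertex set, $F_k\cap s(F_k)=\emptyset$.

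For (c), the first step is to identify $\bigcap_{i\in I}F_i$ with the dominant face $\mathrm{Conv}(W_{[n]\setminus I}(a))$. The trivial isotropy of $a$ shows that a vertex $w(a)$ of $P$ lies in $F_i$ iff $w\in W_{[n]\setminus\{i\}}$, so the common vertices of $\{F_i\}_{i\in I}$ are exactly $W_{[n]\setminus I}(a)$; since $\bigcap_i F_i$ is itself a face of $P$, it equals the convex hull of those common vertices. Hence the map $(I,s)\mapsto s(\bigcap_{i\in I}F_i)$ sends $\Lambda$ onto the set of $W$-translates of dominant faces, which by Theorem \ref{face of P} exhausts all faces of $P$ (with the convention $\bigcap_{i\in\emptyset}F_i=P$ absorbing the non-proper case). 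Well-definedness and injectivity then reduce to the claim that the setwise stabilizer in $W$ of $\mathrm{Conv}(W_J(a))$ is exactly $W_J$: if $w$ stabilizes this face setwise, then $w(a)\in W_J(a)$, so $w(a)=g(a)$ for some $g\in W_J$, whereupon triviality of the isotropy of $a$ forces $w=g\in W_J$. Applying this with $J=[n]\setminus I$ delivers the $W/W_{[n]\setminus I}$ parametrization of the orbit of each dominant face.

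The main obstacle I expect is the stabilizer computation in (c); without it one cannot correctly identify the parameter set as the coset space $W/W_{[n]\setminus I}$. Parts (a) and (b), as well as the remaining book-keeping in (c), all reduce quickly to Theorem \ref{face of P} together with the fact that $a\in C$ has trivial isotropy in $W$.
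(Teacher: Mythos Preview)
Your proposal is correct. Parts (a) and (c) match the paper's argument in spirit, though you supply considerably more detail: in particular, you actually prove the stabilizer identity $\mathrm{Stab}_W\big(\mathrm{Conv}(W_J(a))\big)=W_J$ via the trivial isotropy of $a$, whereas the paper simply asserts it. (Implicit in your identification of $\bigcap_{i\in I}F_i$ is the standard fact $\bigcap_{i\in I}W_{[n]\setminus\{i\}}=W_{[n]\setminus I}$; this is harmless to assume.)

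The genuine divergence is in (b). The paper argues at the level of faces: if $F_k\cap s(F_k)\neq\emptyset$, it is a face of $P$, hence some $r\in W$ carries it to a dominant face; since every dominant face contains $a$, both $rF_k$ and $rsF_k$ are facets through $a$, and by (a) the only such facet in the $W$-orbit of $F_k$ is $F_k$ itself, forcing $rF_k=rsF_k=F_k$. You instead argue at the level of vertices, using that a shared vertex $w_1(a)=sw_2(a)$ together with trivial isotropy of $a$ places $s$ in $W_{[n]\setminus\{k\}}$. Both routes are clean; the paper's is shorter and recycles Theorem~\ref{face of P} directly, while yours is more self-contained and avoids the implicit step that dominant faces all pass through $a$.
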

\begin{proof}
  $(a)$ is an easy consequence of Theorem \ref{face of P}.

 For $(b)$, if $F_k\cap s(F_k)\ne\emptyset$, then there is some $r\in W$ such that $r\left(F_k\cap s(F_k)\right)=rF_k\cap rsF_k$ is a dominant face,  the only possibile case is that $rF_k=rsF_k=F_k$ and then $F_k=sF_k$.

 For $(c)$, $W$ acts transitively on the set of  faces of $P$ by Theorem \ref{face of P}, and the isotropy group of each $\cap_{i\in I}F_i$ is $W_{[n]\setminus I}$. 
\end{proof}

Now we turn to $P/W$, identified with $P\cap \bar{C}$ by Theorem \ref{Wconjugate}. Write $H_i$ for the hyperplane $H_{\alpha_i,0}$.
In $P\cap \bar{C}$, there is only one vertex $a$ of $P$, then all $H_i$'s and $F_i$'s for $i\in [n]$ support all facets of $P/W$.

\begin{lem}\label{facets intersect}
  For each $i\in [n]$, $H_i\cap F_i=\emptyset$.
\end{lem}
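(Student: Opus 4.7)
The plan is to show that every vertex $w(a)$ of $F_i$ (for $w \in W_{[n]\setminus\{i\}}$) lies strictly on the positive side of $H_i$, i.e.\ $\langle w(a),\alpha_i\rangle>0$, and then conclude by convexity that the entire facet $F_i$ lies in the open half-space $\{x:\langle x,\alpha_i\rangle>0\}$, hence misses $H_i$.

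First, since the $W$-action on $V$ preserves the inner product, I would rewrite $\langle w(a),\alpha_i\rangle=\langle a, w^{-1}(\alpha_i)\rangle$ for $w\in W_I$, where $I:=[n]\setminus\{i\}$. The key step is the standard root-system fact that $W_I$ permutes the set $R^+\setminus R_I^+$ of positive roots that are not in $R_I$: a simple reflection $s_j$ with $j\in I$ sends each positive root other than $\alpha_j$ to another positive root, and if that root is not in $R_I$, the image also lies outside $R_I$. Since $\alpha_i$ is a simple root not belonging to $\Delta_I$, induction on the length of $w^{-1}\in W_I$ gives $w^{-1}(\alpha_i)\in R^+\setminus R_I^+$, and in particular $w^{-1}(\alpha_i)$ is a positive root.

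Next I would use that $a\in C$, so $\langle a,\alpha_j\rangle>0$ for every $j\in [n]$. Because every positive root $\beta$ is a non-negative integral combination of the simple roots $\alpha_1,\ldots,\alpha_n$ (with at least one positive coefficient), this yields $\langle a,\beta\rangle>0$ for all $\beta\in R^+$. Applied to $\beta=w^{-1}(\alpha_i)$, this gives $\langle w(a),\alpha_i\rangle>0$ for every $w\in W_I$. Any point $p\in F_i=\mathrm{Conv}(W_I(a))$ is a convex combination $p=\sum_w t_w\,w(a)$ with $t_w\ge 0$ and $\sum_w t_w=1$, so
\[
\langle p,\alpha_i\rangle=\sum_w t_w\,\langle w(a),\alpha_i\rangle>0,
\]
and in particular $p\notin H_i$.

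The only nontrivial ingredient is the combinatorial fact that $W_I$ stabilises $R^+\setminus R_I^+$ (equivalently, $w^{-1}(\alpha_i)\in R^+$ for $w\in W_I$ and $i\notin I$); everything else is convexity plus the defining inequalities of the fundamental chamber. I would cite this from \cite{humphreys1992reflection} rather than reprove it.
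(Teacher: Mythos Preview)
Your proof is correct and follows the same overall outline as the paper --- show each vertex $w(a)$ of $F_i$ satisfies $\langle w(a),\alpha_i\rangle>0$ and conclude by convexity --- but the core computation differs. The paper expands $w(a)-a=\sum_{k\ne i}c_k\alpha_k$ with $c_k\le 0$ (invoking Theorem~\ref{Wconjugate}) and then uses that $\langle\alpha_k,\alpha_i^\vee\rangle\le 0$ for $k\ne i$ to bound $\langle w(a),\alpha_i^\vee\rangle$ below by $\langle a,\alpha_i^\vee\rangle>0$. You instead move $w$ across the pairing via orthogonality of the $W$-action and then invoke the combinatorial fact that $W_I$ stabilises $R^+\setminus R_I^+$, so that $w^{-1}(\alpha_i)$ remains a positive root and hence pairs positively with $a\in C$. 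Both arguments are short and rely on standard root-system ingredients; yours avoids the sign analysis of the Cartan integers but imports a slightly less elementary lemma (stability of $R^+\setminus R_I^+$ under $W_I$), whereas the paper's version stays closer to the explicit coordinate description already set up in Theorem~\ref{Wconjugate}.
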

\begin{proof}
  It suffices to show all vertices of $F_i$ are strictly on the same side of $H_i$. Due to Theorem \ref{Wconjugate}, any vertex $s(a)$ of $P\cap F_i$ for some $s\in W_{[n]\setminus\{i\}}$ satisfies that $s(a)-a=\sum_{k\in [n]\setminus\{i\}}c_k\alpha_k$ with all $c_k$'s non-positive.
  Then one can get that 
  $$\left\langle s(a),\alpha_i^\vee \right\rangle=\left\langle \sum_{k\in [n]\setminus\{i\}}c_k\alpha_k,\alpha_i^\vee\right\rangle+\left\langle a,\alpha_i^\vee\right\rangle=\sum_{k\in [n]\setminus\{i\}}c_k\left\langle \alpha_k,\alpha_i^\vee\right\rangle+\left\langle a,\alpha_i^\vee\right\rangle>0,$$
  since $\left\langle \alpha_k,\alpha_i^\vee\right\rangle\leq 0$ for all $k\in [n]\setminus\{i\}$.
\end{proof}

For subsets $I,J\subset [n]$, let $H_IF_{J}$ denote  $\left(\cap_{i\in I}H_i\right)\cap\left(\cap_{j\in J}F_j\right)$. One can say that $H_IF_{J}\cap\bar{C}$ gives out all faces of $P\cap\bar{C}$ as $I,J$ range among all disjoint subsets of $[n]$, which follows from the result below:
\begin{coro}\label{QuotientIsSimple}
  The quotient polytope $P\cap \bar{C}$ is simple. Furthermore, for any subset $I$ of $[n]$,  $H_IF_{[n]\setminus I}$ determines a vertex of $P\cap \bar{C}$. Consequently, the polytope $P\cap \bar{C}$ is combinatorially equivalent to the standard n-cube $[0,1]^n$.
\end{coro}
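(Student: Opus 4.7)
The plan is to identify the facets of $P\cap\bar{C}$, force simplicity via Lemma \ref{facets intersect}, then exhibit each $H_IF_{[n]\setminus I}$ as a single vertex via an averaging construction, and finally read off the $n$-cube combinatorial structure.

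First, any facet of $P\cap\bar{C}$ lies either in a wall $H_i$ of $\bar{C}$ or in a facet $s(F_i)$ of $P$; in the latter case $\dim(s(F_i)\cap\bar{C})=n-1$, which by the uniqueness in Theorem \ref{face of P} forces $s(F_i)=F_i$. So there are exactly $2n$ facets of $P\cap\bar{C}$, namely $\{H_i\cap P\}_{i\in[n]}$ and $\{F_i\cap\bar{C}\}_{i\in[n]}$. Lemma \ref{facets intersect} gives $H_i\cap F_i=\emptyset$, so each point of $P\cap\bar{C}$ lies on at most one of $\{H_i,F_i\}$ for each $i$, and hence on at most $n$ facets. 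Since a vertex of an $n$-polytope lies on at least $n$ facets, every vertex lies on exactly $n$ and $P\cap\bar{C}$ is simple; each vertex is labeled by the set $I\subset[n]$ of indices for which $H_i$ (rather than $F_i$) appears among its $n$ incident facets.

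For each $I\subset[n]$, I would construct the candidate vertex as the center of mass
\begin{equation*}
v_I:=\frac{1}{|W_I|}\sum_{w\in W_I}w(a).
\end{equation*}
Since $F_{[n]\setminus I}={Conv}(W_I(a))$ by Theorem \ref{face of P}, automatically $v_I\in F_{[n]\setminus I}$, so $v_I\in F_j\cap P$ for every $j\notin I$. For $i\in I$, the bijection $w\leftrightarrow s_iw$ of $W_I$ gives $\langle s_iw(a),\alpha_i\rangle=\langle w(a),s_i(\alpha_i)\rangle=-\langle w(a),\alpha_i\rangle$, and the terms cancel in pairs to yield $\langle v_I,\alpha_i\rangle=0$; thus $v_I\in H_i$. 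Finally, for $k\notin I$, the standard fact that $W_I$ permutes $R^+\setminus R_I^+$ (since $s_i$ for $i\in I$ only alters the $\alpha_i$-coefficient of a root) implies $w^{-1}(\alpha_k)\in R^+$ for every $w\in W_I$, hence $\langle v_I,\alpha_k\rangle=\frac{1}{|W_I|}\sum_{w\in W_I}\langle a,w^{-1}(\alpha_k)\rangle>0$ since $a\in C$. Therefore $v_I\in P\cap\bar{C}$ and lies in $H_IF_{[n]\setminus I}\cap P\cap\bar{C}$, which by simplicity is the unique vertex of $P\cap\bar{C}$ indexed by $I$.

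Thus $P\cap\bar{C}$ has the $2^n$ vertices $\{v_I\}_{I\subset[n]}$, and the incidence pattern (vertex $v_I$ lying on $H_i$ for $i\in I$ and on $F_j$ for $j\notin I$) matches the vertex-facet incidence of $[0,1]^n$ under $H_i\leftrightarrow\{x_i=0\}$ and $F_j\leftrightarrow\{x_j=1\}$. Since the face lattice of a simple polytope is determined by its vertex-facet incidences, $P\cap\bar{C}$ is combinatorially equivalent to $[0,1]^n$. The main obstacle is producing a point in $H_IF_{[n]\setminus I}\cap P\cap\bar{C}$ for each $I$; the averaging trick succeeds because the center of mass sits automatically in the face $F_{[n]\setminus I}\subset P$, the $W_I$-symmetry kills the $\alpha_i$-components for $i\in I$, and the preservation of $R^+\setminus R_I^+$ by $W_I$ gives the remaining chamber inequalities.
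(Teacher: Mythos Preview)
Your proof is correct and follows essentially the same route as the paper: both identify the $2n$ facets, invoke Lemma~\ref{facets intersect} to control incidences, and realize the vertex indexed by $I$ as the center of mass $v_I=\frac{1}{|W_I|}\sum_{w\in W_I}w(a)$ of the dominant face $Conv(W_I(a))$. The presentation differs in order: the paper first exhibits each $H_IF_{[n]\setminus I}$ as a single point (observing that the normal vectors $\{\alpha_i^\vee\}_{i\in I}\cup\{-\omega_j^\vee\}_{j\notin I}$ span $V$) and then deduces simplicity, while you establish simplicity first from the facet bound and then locate each vertex. Your explicit verification that $v_I\in\bar{C}$, using that $W_I$ permutes $R^+\setminus R_I^+$ so that $\langle v_I,\alpha_k\rangle>0$ for $k\notin I$, is a point the paper leaves implicit; it is worth the line you give it, since without it one only knows $v_I\in H_IF_{[n]\setminus I}\subset P$ and not that it lies in the closed chamber.
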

\begin{proof}
  It is clear that the vertex $0$ is only contained  in $H_i\cap\bar{C}$ for any $i\in [n]$, and the vertex $a$ is only contained in $F_j$ for any $j\in [n]$ by Corollary \ref{P is simple}. For any non-empty proper subset $I$ of $[n]$, $\cap_{i\in I}H_i$ passes through the center of mass of ${Conv}(W_{I}(a))$, which is a dominant face of $P$, as well as the intersection of dominant facets $F_j$ for $j\in [n]\setminus I$. Then $H_IF_{[n]\setminus I}$ is a vertex since the corresponding normal vectors $\{\alpha^{\vee}_i\}_{i\in I}\cup\{-\omega_j^{\vee}\}_{j\in [n]\setminus I}$ span $V$.

  Since $P\cap \bar{C}$ is a polytope of dimension $n$, every vertex of it must lie in the intersection of at least $n$ facets. More precisely, each vertex is contained in $H_IF_{[n]\setminus I}$ for some subset $I$ of $[n]$, and only those facets $H_i$'s, $F_j$'s contain the vertex due to Lemma \ref{facets intersect}.

  As for the combinatorially equivalent type, there is  a bijection between faces of $P\cap \bar{C}$ and those of $[0,1]^n$, which assigns to $H_IF_J\cap\bar{C}$ the product of $\{0\}$'s, $\{1\}$'s and $[0,1]$'s indexed by $I$, $J$ and $[n]\setminus\left(I\cup J\right)$ respectively.
\end{proof}
\begin{exam}\label{quotientPolyToA2}
  In the situation of Example \ref{exam A2}, one can choose $a$ to be $\alpha_1^{\vee}+\alpha_2^{\vee}$. See the picture below, the  Weyl polytope $P=Conv\left(W(a)\right)$ is indicated by the red area, and $P/W$ by the brown area.
  Clearly $P/W$ is combinatorially equivalent to $[0,1]^2$.
\begin{figure}[H]
  \centering
  \begin{tikzpicture}[scale=0.6]
    \newdimen\R
    \R=3cm
    \fill[purple,opacity=0.05] (0:\R)--(60:\R)--(120:\R)--(180:\R)--(240:\R)--(300:\R) -- cycle;
      \node[right,purple] at (150:3.5cm) {$P$};
      \fill[brown,opacity=0.8] (0:0)--(30:{0.5*sqrt(3)*\R})--(60:\R)--(90:{0.5*sqrt(3)*\R})--cycle;
      \node[right,brown] at (55:\R) {$P/W$};
    \foreach \x in {60,120,...,360} {\draw[very thick,->] (0:0) -- (\x:\R);}
    \draw[thick,purple] (360:\R) foreach \x in {60,120,...,360} { -- (\x:\R) };
    \foreach \x/\l/\p in
      {120/{$\alpha_1^{\vee}$}/above,
       180/{$-\alpha_2^{\vee}$}/left,
       300/{$-\alpha_1^{\vee}$}/below,
       360/{$\alpha_2^{\vee}$}/right
      }
      \node[label={\p:\l}] at (\x:\R) {};
      \node[left,blue] at (90:4cm) {$H_2$};
      \node[right,blue] at (30:4cm) {$H_1$};
      \foreach \x in {-60,0,60} {       
          \begin{scope}[shift=(\x: 0 cm)]
            \draw[dashed,blue] (90+\x:4cm)--(90+\x:-4cm);
          \end{scope}
      }
      
 \end{tikzpicture}
\end{figure}

\end{exam}

\section{The main result about the Weyl group}
In this section, suppose that $R$ is a reduced crystallographic root system of rank $n$, with a simple system $\Delta$. Let the lattice $N$ be the coweight lattice $\mathcal{P}^{\vee}$, its dual $M$ be the root lattice $\mathcal{Q}$. We consider $P=Conv(W(a))$ in $\mathcal{Q}\otimes_{\mathbb{Z}}\mathbb{R}=V$ with $a\in C\cap \mathcal{Q}$, and its normal vectors in $\mathcal{P}^{\vee}\otimes_{\mathbb{Z}}\mathbb{R}=V$. We write $\sigma$ for a cone of $\Sigma_P$, and $\sigma_{[n]}$ for a cone of $\Sigma_{P/W}$. 
Then for a point $p$ in $P$ (resp. $P\cap \bar{C}$), the cone $\sigma_p$ (resp. $\sigma_{[n],p}$) corresponds to the face of $P$ (resp. $P\cap \bar{C}$) whose relative interior contains $p$.

The Weyl polytope $P$ and its quotient polytope $P/W$ are associated to toric varieties $X_P$ and $X_{P/W}$ respectively.
\begin{lem}\label{toric orbifold}
  $X_P$ is a smooth toric variety, $X_{P/W}$ is a toric orbifold.
\end{lem}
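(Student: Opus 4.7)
The plan is to verify the fan-level smoothness/simpliciality criteria stated right after the definition of normal fan: $X_P$ is smooth iff every cone of $\Sigma_P$ is generated by part of a $\mathbb{Z}$-basis of $N=\mathcal{P}^\vee$, and $X_{P/W}$ is a toric orbifold iff every cone of $\Sigma_{P/W}$ is simplicial. Since maximal cones correspond to vertices and smaller cones are faces of maximal ones, it suffices in each case to analyze the cones at vertices.

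First I would handle $X_P$. By Corollary \ref{P is simple}, the vertex $a$ lies in exactly the $n$ dominant facets $F_1,\dots,F_n$, and their outward normals are the $-\omega_i^\vee$. I would identify the primitive inward facet normals at $a$ by looking at the edges of $P$ emanating from $a$: by Theorem \ref{face of P}, the $1$-dimensional dominant faces at $a$ are the $\mathrm{Conv}\{a,s_k(a)\}$, which point in the direction of $-\alpha_k$. The inward normal $l_i$ to $F_i$ must vanish on the edges contained in $F_i$ (indices $k\ne i$) and be positive on the remaining edge $-\alpha_i$, so $\langle l_i,\alpha_k\rangle=0$ for $k\ne i$ and $\langle l_i,\alpha_i\rangle<0$, forcing $l_i=-\omega_i^\vee$ (primitive in $\mathcal{P}^\vee$). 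Since $\{\omega_1^\vee,\dots,\omega_n^\vee\}$ is a $\mathbb{Z}$-basis of $\mathcal{P}^\vee=N$, the maximal cone of $\Sigma_P$ at $a$ is smooth. Every other maximal cone is $W$-conjugate to this one by Theorem \ref{face of P}, and $W$ acts on $\mathcal{P}^\vee$ by lattice automorphisms, so every maximal cone is smooth; hence $\Sigma_P$ is a smooth fan.

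Next I would handle $X_{P/W}$. By Corollary \ref{QuotientIsSimple}, $P/W\cong P\cap\bar C$ is a simple polytope of dimension $n$ (combinatorially the $n$-cube), so every vertex is contained in exactly $n$ facets. Consequently every maximal cone of $\Sigma_{P/W}$ is generated by exactly $n$ facet normals, which must be linearly independent since they span $V$, and therefore every cone of $\Sigma_{P/W}$ is simplicial. This gives that $X_{P/W}$ is a toric orbifold.

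The main point requiring care is the identification of the inward primitive normals at $a$; once one confirms they are exactly $-\omega_i^\vee$, smoothness of $\Sigma_P$ is immediate from the duality between the fundamental weights and coweights and their bases of the corresponding lattices, and the orbifold claim for $X_{P/W}$ is then a direct consequence of simplicity already established in Corollary \ref{QuotientIsSimple}. No genuinely hard step is expected.
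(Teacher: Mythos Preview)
Your proposal is correct and follows essentially the same approach as the paper: reduce smoothness of $X_P$ to showing the maximal cone at $a$ is generated by the $\mathbb{Z}$-basis $\{-\omega_i^\vee\}$ of $\mathcal{P}^\vee$ and transport by $W$, and reduce the orbifold claim for $X_{P/W}$ to simplicity of $P\cap\bar C$ via Corollary~\ref{QuotientIsSimple}. Your edge-direction computation of the facet normals at $a$ is fine but redundant, since Corollary~\ref{P is simple} already records that the dominant facet $F_i$ has normal $-\omega_i^\vee$.
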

\begin{proof}
  By the correspondence between toric varieties and rational fans, it suffices to show the normal fan $\Sigma_P$ is non-singular and $\Sigma_{P/W}$ is simplicial.

  The normal fans $\Sigma_P$ and $\Sigma_{P/W}$ are both simplicial since $P$ and $P/W$ are both simple by Corollary \ref{P is simple} and Corollary \ref{QuotientIsSimple}. The normal fan $\Sigma_P$ of $P$ contains maximal cones $Cone\left(-s\omega_1^{\vee},-s\omega_2^{\vee},\cdots,-s\omega_n^{\vee}\right)$ for all $s\in W$, and $s(\mathcal{P}^{\vee})=\mathcal{P}^{\vee}$. So the primitive generators of the rays of each maximal cone form a $\mathbb{Z}$-basis of $\mathcal{P}^{\vee}$. Thus $\Sigma_P$ is non-singular.
\end{proof}

\begin{rem}
  
  The normal fan $\Sigma_{P/W}$ contains maximal cones $Cone\left(\{\alpha_i^{\vee}\}_{i\in I}\cup\{-\omega_j^{\vee}\}_{j\in [n]\setminus I}\right)$ as $I$ ranges among subsets of $[n]$. The rays of $Cone\left(\{\alpha_i^{\vee}\}_{i\in [n]}\right)$ have generators $\{\alpha_i^{\vee}\}_{i\in [n]}$ (possibly not primitive), which form a $\mathbb{Z}$-basis of $\mathcal{Q}^{\vee}$. See Table \ref{indexofrank2} below for primitive generators of rays of $Cone\left(\{\alpha_i^{\vee}\}_{i\in [2]}\right)$ for the irreducible root system of rank 2.
   Therefore, $\Sigma_{P/W}$ is simplicial in general.
  \begin{table}[H]
    \begin{center}
      \caption{Primitive generators of rays of $Cone\left(\{\alpha_i^{\vee}\}_{i\in [2]}\right)$}
      \label{indexofrank2}
      \begin{tabular}{c|c|c}
        \textbf{Type} & \textbf{Generators} & \textbf{Primitive generators}\\
        \hline
        \multirow{2}{*}{$A_{2}$} & $\alpha_1^\vee=2\omega_1^{\vee}-\omega_2^{\vee}$ & $\alpha_1^\vee=2\omega_1^{\vee}-\omega_2^{\vee}$ \\
           & $\alpha_2^\vee=-\omega_1^{\vee}+2\omega_2^{\vee}$ & $\alpha_2^\vee=-\omega_1^{\vee}+2\omega_2^{\vee}$ \\
        \hline
        \multirow{2}{*}{$B_{2}$} & $\alpha_1^\vee=2\omega_1^{\vee}-\omega_2^{\vee}$  & $\alpha_1^\vee=2\omega_1^{\vee}-\omega_2^{\vee}$\\
         & $\alpha_2^\vee=-2\omega_1^{\vee}+2\omega_2^{\vee}$  &$\frac{1}{2}\alpha_2^\vee=-\omega_1^{\vee}+\omega_2^{\vee}$\\
        \hline
        \multirow{2}{*}{$G_{2}$} & $\alpha_1^\vee=2\omega_1^{\vee}-3\omega_2^{\vee}$ & $\alpha_1^\vee=2\omega_1^{\vee}-3\omega_2^{\vee}$  \\
         & $\alpha_2^\vee=-\omega_1^{\vee}+2\omega_2^{\vee}$ &$\alpha_2^\vee=-\omega_1^{\vee}+2\omega_2^{\vee}$ 
      \end{tabular}
    \end{center}
  \end{table}
  
\end{rem}

$X_P$ carries a natural action of $W$. Each element $s\in W$ induces a toric automorphism of the toric variety $X_P$, thus $W$ acts smoothly on the underlying smooth manifold of $X_P$.
The $s$ also  induces an automorphism of $P$, which consequently makes the diagram from Diagram \ref{natural model}
$$
\begin{tikzcd}
  \left(S_N\times P\right)/\sim\arrow[r,"\cong"]\arrow[d,"s"'] & X_P\arrow[d,"s"] \\
  \left(S_N\times P\right)/\sim\arrow[r,"\cong"'] & X_{P}
\end{tikzcd}
$$
commute. More precisely, suppose that $[t,p]\in X_P$, then $s$ acts on $X_P$ by sending $[t,p]$ to $[s(t),s(p)]$. 
Suppose that $\cap_{i\in I} F_i$ is the minimal face that contains $p$ for some unique subset $I\subset [n]$, then 
$$t\in \frac{S_N}{S_{N(\sigma_p)}}\cong \frac{V}{\mathcal{P}^{\vee}+Span(\{\omega_i^{\vee}:\,i\in I\})},$$ 
$s(p)$ lies in the relative interior of $\cap_{i\in I} s(F_i)$, and 
$$s(t)\in s\left(\frac{S_N}{S_{N(\sigma_p)}}\right)\cong s\left(\frac{V}{\mathcal{P}^{\vee}+Span(\{\omega_i^{\vee}:\,i\in I\})}\right)=\frac{V}{\mathcal{P}^{\vee}+Span(\{s(\omega_i^{\vee}):\,i\in I\})}\cong\frac{S_N}{S_{N(\sigma_{s(p)})}},$$
which shows that $W$-action on $X_P$ is well-defined.

\begin{lem}\label{equi.triangulation}
  $X_P/W$ is homeomorphic to  a simplicial complex, and so is $X_{P/W}$.
\end{lem}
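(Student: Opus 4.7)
My plan is to produce the simplicial structures on both $X_P/W$ and $X_{P/W}$ by combining classical triangulation theorems with the topological model $(S_N\times P)/\sim$ of Section~2.

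For $X_{P/W}$, the quickest route is to observe that $X_{P/W}$ is a compact toric orbifold by Lemma~\ref{toric orbifold}, and hence a compact complex projective variety via the embedding $\rho$ of Section~2. Such a projective variety is a compact real semi-algebraic subset of $\mathbb{CP}^k$, so \L{}ojasiewicz's triangulation theorem for compact semi-algebraic sets supplies a finite simplicial complex structure.

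For $X_P/W$, I would argue in two steps. First, $X_P$ is a compact smooth complex manifold by Lemma~\ref{toric orbifold}, and the $W$-action described just before the lemma is smooth (in fact holomorphic), since each $s\in W$ induces a toric automorphism of $X_P$. Illman's equivariant triangulation theorem then produces a finite $W$-equivariant simplicial triangulation of $X_P$. Next, after passing to the barycentric subdivision, the $W$-action becomes admissible, meaning that any simplex fixed setwise by an element of $W$ is fixed pointwise. The orbit space $X_P/W$ then inherits a canonical finite simplicial complex structure whose simplices are precisely the $W$-orbits of simplices in the subdivided triangulation.

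The main technical subtlety is the admissibility condition in the second step: without it the quotient of a simplicial complex by a finite group action is only a CW complex, not a simplicial complex, so the barycentric subdivision is essential. A hands-on alternative that bypasses Illman's theorem is to exploit the polytope model directly: equivariantly triangulate $P$ by starring at the $W$-fixed center of mass together with a barycentric subdivision of the face poset, equivariantly triangulate the compact torus $S_N$ compatibly with every subtorus $S_{N(\sigma)}$, $\sigma\in\Sigma_P$, by pulling back a sufficiently fine $W$-invariant lattice refinement, form the product triangulation on $S_N\times P$, and then descend through both the equivalence relation $\sim$ and the diagonal $W$-action. Either route establishes the lemma.
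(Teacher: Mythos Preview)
Your proposal is correct and follows essentially the same route as the paper: Illman's equivariant triangulation theorem for the smooth $W$-manifold $X_P$ to handle $X_P/W$, and triangulability of (projective/toric) varieties for $X_{P/W}$. You are in fact more careful than the paper in spelling out the barycentric-subdivision step needed to pass from a $W$-equivariant triangulation of $X_P$ to an honest simplicial structure on the quotient, a point the paper leaves implicit.
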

\begin{proof}
  Due to Illman's results on triangulations of smooth equivariant manifolds \cite[Theorem 3.6]{illman1978smooth}, $X_P$ admits a smooth $W$-equivariant triangulation, which implies that $X_P/W$ admits an ordinary triangulation.

  $X_{P/W}$ is a toric variety and hence admits a triangulation, see \cite[Section 1]{hironaka1975triangulations} or \cite[p.\,1]{hofmann2009triangulation}.
\end{proof}

For any point $p\in P$, there exists $s\in W$ such that $s(p)\in P\cap\bar{C}$ and this $s(p)$ is unique (see Theorem \ref{Wconjugate}). In particular, when $p$ lies in $P\setminus \cup_{i\in [n]}H_i$, the $s$ is unique; when $p$ lies in $P\cap\left(\cap_{i\in I}H_i\right)$ for some maximal subset $I\subset [n]$, $W_I$  acts trivially on $p$ due to Theorem \ref{Wconjugate}, while the $W_I$-action on $S_N$ is not trivial. Consequently, we 
can construct a space as below:
$$\frac{S_N\times \left(P\cap \bar{C}\right)}{\sim_{ed}},$$
where $(t_1,p_1)\sim_{ed}(t_2,p_2)$ if and only if 
$p_1=p_2$ lying in the minimal face $H_IF_J\cap\bar{C}$ of $P\cap \bar{C}$ for some disjoint subsets $I,J\subset [n]$, and
$t_1$, $t_2$ represnet the same element of $\frac{S_N}{S_{N(\sigma_{p_1})}}\big/W_I$. Therefore we get the following result:

\begin{lem}\label{equi description}
  $X_P/W$ is homeomorphic to $\frac{S_N\times \left(P\cap \bar{C}\right)}{\sim_{ed}}$.
\end{lem}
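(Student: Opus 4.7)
The plan is to construct an explicit continuous bijection $\Phi$ between the compact source and the Hausdorff target, then invoke the standard compact-to-Hausdorff argument. Define $\Phi$ as the composite of the inclusion $S_N\times (P\cap\bar{C})\hookrightarrow S_N\times P$, the quotient map $S_N\times P\twoheadrightarrow X_P$ coming from the topological model (1) of Section 2, and the orbit projection $X_P\twoheadrightarrow X_P/W$. The task is then to show that $\Phi$ factors through the relation $\sim_{ed}$ and descends to a homeomorphism.

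For well-definedness, suppose $(t_1,p)\sim_{ed}(t_2,p)$ with $p$ in the relative interior of the minimal face $H_IF_J\cap\bar{C}$. By construction $t_2 = w(t_1)\cdot n$ for some $w\in W_I$ and some $n\in S_{N(\sigma_p)}$; since $w$ fixes $p$ pointwise by Theorem \ref{isotropy}, the $W$-action on $X_P$ sends $[t_1,p]$ to $[w(t_1),p]=[w(t_1)\cdot n,p]=[t_2,p]$, so the two points lie in the same $W$-orbit. Surjectivity is immediate from Theorem \ref{Wconjugate}: any $[t,p]\in X_P$ has $s\in W$ with $s(p)\in\bar{C}$, and then $s\cdot [t,p]=[s(t),s(p)]$ is represented by a pair with second coordinate in $P\cap\bar{C}$. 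For injectivity, suppose $[t_1,p_1]$ and $[t_2,p_2]$ (both representatives with second coordinates in $\bar{C}$) lie in the same $W$-orbit via some $s\in W$. Then $s(p_1)=p_2$, and because $\bar{C}$ is a strict fundamental domain (Theorem \ref{Wconjugate}) one forces $p_1=p_2=:p$, so $s$ stabilizes $p$. Theorem \ref{isotropy} identifies this stabilizer with $W_I$ where $I=\{i:\,p\in H_i\}$, and $[s(t_1),p]=[t_2,p]$ in $X_P$ gives $t_2\equiv s(t_1)\pmod{S_{N(\sigma_p)}}$; hence $(t_1,p)\sim_{ed}(t_2,p)$.

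Finally $\Phi$ is a continuous bijection from the compact space $(S_N\times (P\cap\bar{C}))/\sim_{ed}$ to the Hausdorff space $X_P/W$ (the latter being the quotient of the compact Hausdorff variety $X_P$ by a finite group), so $\Phi$ is automatically a homeomorphism. The main subtlety I expect is checking that the $W_I$-action on $S_N/S_{N(\sigma_p)}$ that appears in the definition of $\sim_{ed}$ is honestly defined: this requires $W_I$ to preserve the sublattice $N(\sigma_p)\subset \mathcal{P}^\vee$, which holds because every $w\in W_I$ fixes the minimal (dominant) face of $P$ containing $p$, hence fixes the cone $\sigma_p$ setwise and therefore stabilizes $N(\sigma_p)$ and its associated subtorus. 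Once this is in hand, the three verifications above are essentially bookkeeping.
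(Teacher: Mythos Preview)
Your argument is correct and follows exactly the approach the paper intends. The paper does not give a separate proof for this lemma; it is presented as a direct consequence of the paragraph preceding it, which records precisely the two facts you use---that $\bar{C}$ is a strict fundamental domain for the $W$-action on $P$ (Theorem~\ref{Wconjugate}) and that the stabilizer of a boundary point $p$ is the parabolic $W_I$ generated by the simple reflections fixing $p$ (Theorem~\ref{isotropy}). Your compact-to-Hausdorff step and the explicit bijectivity check simply make this rigorous. One minor sharpening: in your final paragraph you note that $W_I$ preserves $\sigma_p$ setwise; in fact, since $I\cap J=\emptyset$ one has $s_i(\omega_j^\vee)=\omega_j^\vee$ for every $i\in I$ and $j\in J$, so $W_I$ fixes $\sigma_p$ pointwise and acts trivially on $S_{N(\sigma_p)}$, as the paper remarks just after Lemma~\ref{preimagePhi}.
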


Via the above description, the canonical quotient map $S_N\times \left(P\cap \bar{C}\right)\to X_{P/W}$ admits a factorization through $X_P/W$:
\begin{equation}\label{factorization}
  \begin{tikzcd}
    S_N\times \left(P\cap \bar{C}\right)\arrow[rr]\arrow[rd] &  &\frac{S_N\times \left(P\cap \bar{C}\right)}{\sim}  \\
     & \frac{S_N\times \left(P\cap \bar{C}\right)}{\sim_{ed}}\arrow[ru,"\Phi"'] &
  \end{tikzcd}
\end{equation}
Then we get a map $\Phi: X_P/W\to X_{P/W}$ from a quotient to a space underlying a toric variety, which is clearly continuous and surjective.

\begin{thm}\label{weak equivalence}
  The map $\Phi:X_P/W\to X_{P/W}$ is a homotopy equivalence.
\end{thm}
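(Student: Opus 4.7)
The plan is to realize $\Phi$ as a proper continuous surjection between finite simplicial complexes whose point-preimages are all contractible and locally contractible, and then invoke a Vietoris-type theorem for homotopy (Smale's theorem) together with Whitehead's theorem.

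By Lemma \ref{equi.triangulation}, both $X_P/W$ and $X_{P/W}$ are finite simplicial complexes, and the surjectivity and continuity of $\Phi$ are already observed. The heart of the proof is analyzing the fibers of $\Phi$. I would fix $[t,p] \in X_{P/W}$ with $p$ in the relative interior of a face $H_I F_J \cap \bar{C}$ of $P \cap \bar{C}$ and compute $\Phi^{-1}([t,p])$. Unwinding the factorization \eqref{factorization}, the preimage is the image under $\sim_{ed}$ of the $\sim$-class of $(t,p)$, i.e.\ of the coset $t \cdot S_{N(\sigma_{[n],p})} \times \{p\}$, modulo the further identifications from $\sim_{ed}$: the $W_I$-action on $S_N$ (with $W_I$ the stabilizer of $p$ by Theorem \ref{isotropy}) and the subtorus $S_{N(\sigma_p)}$. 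Since $\sigma_p \subset \sigma_{[n],p}$ and $W_I$ fixes the generators $\{-\omega_j^\vee\}_{j \in J}$ of $\sigma_p$ (because $\langle \omega_j^\vee, \alpha_i \rangle = \delta_{ij}$ vanishes for $i \in I$, $j \in J$), the $W_I$-action descends to the quotient subtorus $T := S_{N(\sigma_{[n],p})}/S_{N(\sigma_p)}$, which has dimension $|I|$. The fiber then reduces to a quotient $T/W_I$ (up to an affine twist by $t$ which lies inside $S_{N(\sigma_{[n],p})}$ and does not change the homeomorphism type).

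Under the identifications reviewed in Section 3, in particular \eqref{alcove quotient}, the space $T/W_I$ is homeomorphic to a closed fundamental alcove of the sub-root-system $R_I$ modulo a finite group action. Lemma \ref{contractible} then shows each fiber is both contractible and locally contractible. To close, I would appeal to Smale's Vietoris-type theorem for homotopy: a proper surjection between locally compact, locally contractible ANRs whose point-preimages are compact, connected, and locally contractible is a weak homotopy equivalence. Since both $X_P/W$ and $X_{P/W}$ are finite simplicial complexes, Whitehead's theorem then upgrades this to a genuine homotopy equivalence. The main obstacle is the middle step: one must carefully disentangle the two different cones $\sigma_p \in \Sigma_P$ and $\sigma_{[n],p} \in \Sigma_{P/W}$ at a point $p$ on a reflection hyperplane, verify via the lattice relations from Section 3 that the combined $W_I$-action and subtorus identification precisely yield the $T/W_I$ quotient, and then match this with the ``alcove mod finite group'' description so that Lemma \ref{contractible} can be invoked.
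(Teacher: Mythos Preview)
Your proposal is correct and follows essentially the same route as the paper: verify the hypotheses of Smale's Vietoris-type theorem (Theorem~\ref{smale}) using Lemmas~\ref{equi.triangulation}, \ref{space prop}, \ref{map prop}, compute the fiber as $S_{N(\sigma_{[n],p})}/S_{N(\sigma_p)}$ modulo $W_I$ (this is exactly Lemma~\ref{preimagePhi}), identify this with an alcove quotient via~\eqref{alcove quotient}, and apply Lemma~\ref{contractible} to get contractibility, then finish with Whitehead. The one place to be careful is the ``affine twist by $t$'': $t$ need not lie in $S_{N(\sigma_{[n],p})}$, and the paper removes it by translating by the component of a lift $\tilde t$ orthogonal to $\mathrm{Span}(\{\alpha_i^\vee\}_{i\in I})$, which is $W_I$-equivariant; your sketch gestures at this but you should make that step explicit.
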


Since $X_P/W,\, X_{P/W}$ are simplicial complexes (Lemma \ref{equi.triangulation}), it suffices to show that $\Phi$ is a weak equivalence, which relies on a result of Stephen Smale as below. 
\begin{thm}[{\cite[p.\,1]{smale1957vietoris}}]\label{smale}
  Suppose $X, Y$ are connected, locally compact, separable metric spaces, $f: X\to Y$ is proper and onto, $X$ is locally $k$-connected. For each $y\in Y$, $f^{-1}(y)$ is locally $(k-1)$-connected and $(k-1)$-connected. Then $Y$ is locally $k$-connected and the induced homomorphism $f_{*}:\pi_{r}(X)\to \pi_{r}(Y)$ is isomorphic for $0\leq r\leq k-1$, onto for $r=k$.
\end{thm}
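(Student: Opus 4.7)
The plan is to combine Smale's Vietoris-type theorem (Theorem \ref{smale}) with Whitehead's theorem. Since both $X_P/W$ and $X_{P/W}$ are finite simplicial complexes by Lemma \ref{equi.triangulation}, in particular CW complexes, it suffices to prove that $\Phi$ is a weak homotopy equivalence. I will verify the hypotheses of Theorem \ref{smale} for every $k$, which delivers isomorphisms on all homotopy groups.

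The easy hypotheses come first. Both spaces are compact metrizable, hence locally compact and separable metric; connectedness of $X_P/W$ follows from connectedness of $X_P$ (its complete fan gives a connected toric variety) and continuity of the quotient, and $X_{P/W}$ is connected for the same reason. Since $\Phi$ is a continuous surjection between compact Hausdorff spaces, it is proper. Local $k$-connectedness of $X_P/W$ for all $k$ holds because it is a simplicial complex.

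The heart of the argument is identifying the fibers of $\Phi$ and showing that each is contractible and locally contractible (so $(k{-}1)$-connected and locally $(k{-}1)$-connected for every $k$). Given $[t,p]_{\sim} \in X_{P/W}$, suppose $p$ lies in the relative interior of the face $H_I F_J \cap \bar{C}$ for disjoint $I,J\subset [n]$. Unwinding the two equivalence relations: the cone $\sigma_p\in\Sigma_P$ is generated by $\{-\omega_j^{\vee}\}_{j\in J}$ (since the face of $P$ whose relative interior contains $p$ is $\cap_{j\in J}F_j$), while $\sigma_{[n],p}\in\Sigma_{P/W}$ is generated by $\{\alpha_i^{\vee}\}_{i\in I}\cup\{-\omega_j^{\vee}\}_{j\in J}$. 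Because $\langle\omega_j^{\vee},\alpha_i\rangle=0$ for $i\in I,\ j\in J$, the subgroup $W_I$ fixes each $\omega_j^{\vee}$, so $W_I$ acts trivially on $S_{N(\sigma_p)}$ and preserves $S_{N(\sigma_{[n],p})}$. A direct comparison of $\sim$ and $\sim_{ed}$ then shows
\[
\Phi^{-1}\bigl([t,p]_{\sim}\bigr)\ \cong\ \bigl(S_{N(\sigma_{[n],p})}/S_{N(\sigma_p)}\bigr)\,\big/\,W_I,
\]
independent of $t$. The quotient $T_I:=S_{N(\sigma_{[n],p})}/S_{N(\sigma_p)}$ is a compact torus on which $W_I$ acts through its natural action on $\mathcal{P}^{\vee}$.

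To see that $T_I/W_I$ is contractible and locally contractible, I pull the action back to the universal cover: the affine $(\mathcal{Q}^{\vee}_I\rtimes W_I)$-action on the relevant Euclidean subspace has a closed alcove $\bar{A}_I$ as fundamental domain (Theorem \ref{alcove simplex} and Theorem \ref{quotient under Q} applied to the sub-root-system $R_I$), and the quotient $T_I/W_I$ is identified with $\bar{A}_I/(\mathcal{S}^{\vee}/\mathcal{Q}_I^{\vee})$ as in \eqref{alcove quotient} for an appropriate intermediate lattice. This is a finite affine group acting on a polytope, and Lemma \ref{contractible} directly yields contractibility and local contractibility. This is exactly the situation flagged in the remark after Lemma \ref{contractible}.

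With fibers contractible and locally contractible, Theorem \ref{smale} applies for every $k$ and shows $\Phi_*:\pi_r(X_P/W)\to\pi_r(X_{P/W})$ is an isomorphism for all $r\geq 0$. Combined with Whitehead's theorem for CW complexes, $\Phi$ is a homotopy equivalence. The main obstacle is the fiber computation: carefully reconciling the two equivalence relations $\sim$ and $\sim_{ed}$ over a face where $I$ is nontrivial, and then recognizing the resulting torus-by-Weyl-group quotient as a space covered by Lemma \ref{contractible}. Once this is in place, the rest is a formal application of Smale plus Whitehead.
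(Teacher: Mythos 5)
The statement you were given to prove is Smale's Vietoris mapping theorem (Theorem~\ref{smale}), which the paper does not prove at all: it is a cited result from Smale's 1957 paper, invoked as a black box. Your proposal does not prove this theorem — it \emph{invokes} Theorem~\ref{smale} in its very first sentence as the main engine, and then goes on to deduce Theorem~\ref{weak equivalence} (the homotopy equivalence $\Phi:X_P/W\to X_{P/W}$). Read as a proof of Theorem~\ref{smale}, this is circular: you assume the conclusion to derive the conclusion. An actual proof of Smale's theorem requires real Vietoris-type machinery (approximate lifting of fine-mesh maps over $Y$, constructing partial cross-sections by induction over a decomposition, or a nerve/Vietoris--Begle argument adapted to homotopy groups); none of that appears, and nothing in the paper would let you reconstruct it, since the paper doesn't contain that proof either.

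If you in fact set out to prove Theorem~\ref{weak equivalence}, then your argument is correct and matches the paper's plan: triangulability of $X_P/W$ and $X_{P/W}$ (Lemma~\ref{equi.triangulation}), properness of $\Phi$ (Lemma~\ref{map prop}), identification of the fiber over a point of $H_I F_J\cap\bar{C}$ as $\bigl(S_{N(\sigma_{[n],p})}/S_{N(\sigma_p)}\bigr)/W_I$ (Lemma~\ref{preimagePhi}), reduction via the alcove quotient \eqref{alcove quotient} to $\bar{A_I}/\bigl(\mathcal{SP}_{I,J}^{\vee}/\mathcal{Q}_I^{\vee}\bigr)$, and (local) contractibility from Lemma~\ref{contractible}, after which Smale plus Whitehead finishes. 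But that is a proof of a \emph{different} statement in the paper, not of the one you were asked to address.
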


\begin{lem}\label{space prop}
  $X_P/W$, $X_{P/W}$ are connected, Hausdorff, compact, locally contractible, separable metric spaces.
\end{lem}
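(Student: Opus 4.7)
The plan is to split the work by property and handle $X_P/W$ and $X_{P/W}$ essentially in parallel, leveraging the fact that both arise from lattice polytopes (so embed projectively) together with the simplicial-complex description from Lemma \ref{equi.triangulation}.

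First I will observe that the embedding $\rho$ of Section 2 realises $X_P$ and $X_{P/W}$ as closed subvarieties of some $\mathbb{CP}^k$. This immediately gives each of $X_P$ and $X_{P/W}$ all five properties: compactness (closed subspace of a compact space), Hausdorffness (subspace of a Hausdorff space), and a separable metric structure (restriction of the Fubini--Study metric). Connectedness follows because each contains the dense algebraic torus $(\mathbb{C}^{\ast})^{n}$, which is connected. For $X_{P/W}$ this already handles everything except local contractibility.

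For $X_P/W$ the projective embedding applies only to $X_P$, not to the quotient, so I will instead argue as follows. Since $W$ is a finite group acting continuously on the compact Hausdorff space $X_P$, the quotient map is an open surjection onto a compact Hausdorff space (finite group actions on Hausdorff spaces have Hausdorff quotients, and quotient maps for group actions are always open). Openness of the quotient map together with second countability of $X_P$ yields second countability of $X_P/W$, whence Urysohn's metrisation theorem makes $X_P/W$ metrisable; compactness then gives separability, and connectedness passes from $X_P$ through the continuous surjection.

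Local contractibility is handled uniformly for both spaces by Lemma \ref{equi.triangulation}, which identifies each of $X_P/W$ and $X_{P/W}$ with a finite simplicial complex. Any finite simplicial complex is locally contractible: a point $x$ lies in the relative interior of a unique open simplex $\sigma$, and the open star of $\sigma$ is an open neighborhood of $x$ that deformation retracts to $x$ via the simplex-wise straight-line homotopy in barycentric coordinates; barycentric subdivisions then provide arbitrarily small such contractible neighborhoods.

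No step is a genuine obstacle; the only point deserving a moment's attention is the metrisability of the quotient $X_P/W$, which Urysohn handles cleanly, and as a backup one may instead average to a $W$-invariant metric on $X_P$ and then push it forward by $d'([x],[y]) = \min_{w \in W} d(x, w\cdot y)$, using finiteness of $W$ at every stage.
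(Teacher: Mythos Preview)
Your proof is correct, but it takes a different route from the paper's. The paper derives \emph{all} five properties uniformly from Lemma~\ref{equi.triangulation}: once both spaces are known to be (compact, hence finite) simplicial complexes, it embeds them as subcomplexes of a standard simplex to get Hausdorffness and metrisability, quotes Hatcher for local contractibility, and finishes with ``compact metric $\Rightarrow$ separable''. You instead split the work: projective embedding via $\rho$ handles compactness, Hausdorffness, and metrisability for the toric varieties themselves; quotient-space generalities (open quotient map, Urysohn) handle $X_P/W$; and you invoke Lemma~\ref{equi.triangulation} only for local contractibility. Your approach is more hands-on and makes clearer \emph{why} each property holds from first principles, while the paper's approach is more economical---one lemma does everything. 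One small imprecision: Lemma~\ref{equi.triangulation} as stated does not assert finiteness of the triangulations, only triangulability; finiteness follows from the compactness you have already established, but your open-star argument for local contractibility does not need finiteness anyway.
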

\begin{proof}
  By Lemma \ref{equi.triangulation}, $X_P/W,\, X_{P/W}$ are connected compact simplicial complexes. 
  A simplcial complex is locally contractible (or see \cite[Proposition A.4]{hatcher2002algebraic}). 
  A compact simplicial complex is finite (or see \cite[Proposition A.1]{hatcher2002algebraic}), then it can be embeded as a subcomplex of some simplex with the standard Euclidean metric, thus it is Hausdorff, locally compact and metrizable.
  One may need an fact that a compact metric space is separable. 
\end{proof}

\begin{rem}
  For Lemma \ref{space prop}, there is an elementary method to show that $X_{P}/W$ is locally contractible. 
  Suppose that $[t,p]\in X_{P}$ with $p$ lying in the relative interior of $H_IF_J\cap \bar{C}$ for some disjoint subsets $I,J\subset [n]$. One can choose appropriate open sets $U_t$ of the subtorus 
  $\frac{Span\left(\{\alpha_i^{\vee}\}_{i\in [n]\setminus J}\right)+\mathcal{P}^{\vee}}{\mathcal{P}^{\vee}}$, 
  $U_p$ of $P$ such that 
  $U_t\times S_{N(\sigma_p)}\times U_P\subset S_N\times P$ projects onto an open neighborhood of $[t,p]$ in $X_{P}$ that is also a slice at $[t,p]$ under the $W$-action (see \cite[Theorem II.4.4]{bredon1972introduction}). Then one can show that there is a $W_I$-equivariant deformation retraction from this slice onto a point, which implies the local contractibility of $X_P/W$.

\end{rem}

\begin{lem}\label{map prop}
  $\Phi$ is proper, that is, the preimage of a compact set is compact.
\end{lem}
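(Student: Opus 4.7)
The proof should be essentially immediate once we invoke Lemma \ref{space prop}. The plan is to observe that properness of a continuous map is automatic whenever the source is compact and the target is Hausdorff.

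First, I would recall that $\Phi$ is continuous: by the factorization in Diagram \ref{factorization}, $\Phi$ is the unique continuous map making a quotient-by-quotient diagram commute, where both quotient maps are surjective continuous maps out of $S_N\times(P\cap\bar{C})$. Continuity then follows from the universal property of the quotient topology applied to the relation $\sim_{ed}$.

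Next, I would invoke Lemma \ref{space prop}: $X_P/W$ is compact and $X_{P/W}$ is Hausdorff. Given any compact subset $K\subset X_{P/W}$, it is closed because $X_{P/W}$ is Hausdorff. Since $\Phi$ is continuous, $\Phi^{-1}(K)$ is a closed subset of $X_P/W$. Finally, a closed subset of a compact space is compact, so $\Phi^{-1}(K)$ is compact, which is precisely the statement that $\Phi$ is proper.

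There is no real obstacle here; the only subtlety is to make sure the prerequisite facts in Lemma \ref{space prop} (compactness of $X_P/W$ and the Hausdorff property of $X_{P/W}$) have been established, which they have. So the proof reduces to one line once those are in place. This same argument would, of course, go through for any continuous map from a compact space into a Hausdorff space, and in particular will apply verbatim to the parabolic version $\Phi_K$ in the next section.
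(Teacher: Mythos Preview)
Your proposal is correct and matches the paper's own proof, which simply notes that $\Phi$ is continuous while $X_P/W$ is compact and $X_{P/W}$ is Hausdorff (by Lemma~\ref{space prop}). You have just spelled out the standard closed-subset-of-compact argument in a bit more detail.
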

\begin{proof}
  This follows from that $X_P/W$ and $X_{P/W}$ are Hausdorff and compact, and that $\Phi$ is continuous.
\end{proof}

The following work is to understand the fiber of $\Phi$. Before we go to the general result (Lemma \ref{preimagePhi}), here is an easy example:
\begin{exam}
One can consider the root system $R$ of type $A_1$ lying in $\mathbb{R}$ with a simple coroot $\alpha_1^{\vee}=2$ and a fundamental coweight $\omega_1^{\vee}=1$. The Weyl group $W$ is generated by $s_1$, the reflection of $\mathbb{R}$ around $0$. 
In Example \ref{eg:sapces of A1}, $P$ is a Weyl polytope and $P'$ is the quotient polytope $P/W$. W acts on $X_P$ by sending $\left[e^{2\pi\sqrt{-1}\theta},p\right]$ to $\left[e^{-2\pi\sqrt{-1}\theta},-p\right]$, then there is an homeomorphism
\begin{align*}
  X_P/W\cong \left(S^1\times [0,1]\right)/\sim_{ed}
  \end{align*}
  where $\left(e^{2\pi\sqrt{-1}\theta_1},0\right)\sim_{ed} \left(e^{-2\pi\sqrt{-1}\theta_1},0\right)$, $\left(e^{2\pi\sqrt{-1}\theta_1},1\right)\sim_{ed} \left(e^{2\pi\sqrt{-1}\theta_2},1\right)$ for all $\theta_1,\theta_2\in\mathbb{R}$. The map $\Phi:\left(S^1\times [0,1]\right)/\sim_{ed}\to \left(S^1\times [0,1]\right)/\sim$ is given by 
  $$\left[e^{2\pi\sqrt{-1}\theta},p\right]\mapsto \left[e^{2\pi\sqrt{-1}\theta},p\right].$$
  Then $\Phi^{-1}([t,p])$ is a point if $p\ne 0$, an interval (up to homeomorphism) if $p=0$.

 The three  spaces are decipted in Figure \ref{space of A1}.
\end{exam}

\begin{lem}\label{preimagePhi}
  Suppose that $[t,p]$ is a point of $X_{P/W}$, and $p$ lies in the relative interior of $H_IF_J\cap\bar{C}$ for some disjoint subsets $I,J\subset [n]$. Then $\Phi^{-1}([t,p])$ is homeomorphic to 
  $$\frac{S_{N(\sigma_{[n],p})}}{S_{N(\sigma_{p})}}\bigg/W_I\cong\frac{Span\left(\{\alpha_i^{\vee}\}_{i\in I}\right)}{Span\left(\{\alpha_i^{\vee}\}_{i\in I}\right)\bigcap\left(\mathcal{P}^{\vee}+Span\left(\{\omega_j^{\vee}\}_{j\in J}\right)\right)}\bigg/ W_I.$$
\end{lem}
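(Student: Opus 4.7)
The plan is to unpack $\Phi^{-1}([t,p])$ through the topological model of Lemma~\ref{equi description} and then identify the resulting quotient with the stated torus modulo $W_I$. Under the identifications $X_P/W\cong(S_N\times(P\cap\bar C))/{\sim_{ed}}$ and $X_{P/W}\cong(S_N\times(P\cap\bar C))/{\sim}$, the map $\Phi$ is simply the further collapse from $\sim_{ed}$ to $\sim$; hence $[t',p]_{\sim_{ed}}$ lies in $\Phi^{-1}([t,p])$ precisely when $t'-t\in S_{N(\sigma_{[n],p})}$, and the assignment $s\mapsto[t+s,p]_{\sim_{ed}}$ yields a surjection $S_{N(\sigma_{[n],p})}\twoheadrightarrow\Phi^{-1}([t,p])$.

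Next I would pin down the cones. Since $p$ lies in the relative interior of $H_IF_J\cap\bar C$, Corollaries~\ref{P is simple} and~\ref{QuotientIsSimple} together with Lemma~\ref{facets intersect} force $\sigma_p=\mathrm{Cone}(\{-\omega_j^\vee\}_{j\in J})$ in $\Sigma_P$ and $\sigma_{[n],p}=\mathrm{Cone}(\{\alpha_i^\vee\}_{i\in I}\cup\{-\omega_j^\vee\}_{j\in J})$ in $\Sigma_{P/W}$; in particular $\sigma_p$ is a face of $\sigma_{[n],p}$, giving $S_{N(\sigma_p)}\subset S_{N(\sigma_{[n],p})}$. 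Unwinding $\sim_{ed}$, the parameters $s_1,s_2\in S_{N(\sigma_{[n],p})}$ give the same element of $\Phi^{-1}([t,p])$ iff $s_2\equiv w(s_1)+(w-1)t\pmod{S_{N(\sigma_p)}}$ for some $w\in W_I$. Here $(w-1)t$ lies in $S_{N(\sigma_{[n],p})}$ because $(w-1)V\subset\mathrm{Span}(\{\alpha_i^\vee\}_{i\in I})$, and $W_I$ preserves both $S_{N(\sigma_{[n],p})}$ and $S_{N(\sigma_p)}$ (it fixes each $\omega_j^\vee$ for $j\in J$ and preserves $\mathrm{Span}(\{\alpha_i^\vee\}_{i\in I})$ and $\mathcal{P}^\vee$).

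The main obstacle is disposing of the inhomogeneous translation $(w-1)t$, so that the equivalence becomes the natural linear $W_I$-action on the torus $T:=S_{N(\sigma_{[n],p})}/S_{N(\sigma_p)}$. For this I would exploit the $W_I$-module decomposition $V=V^{W_I}\oplus\mathrm{Span}(\{\alpha_i^\vee\}_{i\in I})$ with $V^{W_I}=\mathrm{Span}(\{\omega_k^\vee\}_{k\in[n]\setminus I})$; passing to $S_N=V/\mathcal{P}^\vee$, this yields $S_N=F_0+S_{N(\sigma_{[n],p})}$, where $F_0$ is the subtorus of $W_I$-fixed points coming from $V^{W_I}$. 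Shifting $t$ within its $\sim$-class by a suitable element of $S_{N(\sigma_{[n],p})}$, I may assume $t\in F_0$, so that $(w-1)t=0$ for every $w\in W_I$; the equivalence then collapses to the natural linear action of $W_I$ on $T$, and the fiber is exactly $T/W_I$. The stated algebraic form follows from the second isomorphism theorem applied to $A=\mathrm{Span}(\{\alpha_i^\vee\}_{i\in I})$ and $B=\mathcal{P}^\vee+\mathrm{Span}(\{\omega_j^\vee\}_{j\in J})$: $T=(A+B)/B\cong A/(A\cap B)$, with the $W_I$-action transported in the evident way.
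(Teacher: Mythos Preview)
Your argument is correct and follows essentially the same route as the paper. Both proofs identify the fiber with the image of the coset $tS_{N(\sigma_{[n],p})}$ in $\frac{S_N}{S_{N(\sigma_p)}}\big/W_I$, then use the decomposition $V=\mathrm{Span}(\{\alpha_i^\vee\}_{i\in I})\oplus V^{W_I}$ (this is the same as the paper's orthogonal splitting, since $V^{W_I}=\mathrm{Span}(\{\omega_k^\vee\}_{k\in[n]\setminus I})=\mathrm{Span}(\{\alpha_i^\vee\}_{i\in I})^{\perp}$) to remove the affine translation by $t$; the paper does this by translating by $-\pi_2(\tilde t)$ upstairs in $V$, while you accomplish the same by adjusting the representative $t$ inside $S_N$ to lie in the fixed subtorus $F_0$.
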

In this lemma, recall that $\sigma_{[n],p}= Cone\left(\{\alpha_i^{\vee}\}_{i\in I}\cup\{-\omega_j^{\vee}\}_{j\in J}\right)$ and $\sigma_{p}= Cone\left(\{-\omega_j^{\vee}\}_{j\in J}\right)$. One can see that $W_I$ acts trivially on $p$ and on $S_{N(\sigma_{p})}$ by Theorem \ref{Wconjugate} and definition. This lemma shows, roughly speaking, that the fiber of $\Phi$ is homeomorphic to the quotient of a torus under the action of  the corresponding  Weyl group.

\begin{proof}[Proof of Lemma \ref{preimagePhi}]
  Here we use Diagram \ref{factorization} implicitly.
  The preimage of $[t,p]$ in $S_N\times \left(P\cap \bar{C}\right)$ is 
$$tS_{N(\sigma_{[n],p})}\times\{p\},$$
which is, via the projection into the first component, homeomorphic to 
$$tS_{N(\tilde{\sigma}_p)}=\frac{\tilde{t}+N+N(\tilde{\sigma}_p)\otimes_{\mathbb{Z}}\mathbb{R}}{N}=\frac{\tilde{t}+\mathcal{P}^\vee+Span\left(\{\alpha_i^{\vee}\}_{i\in I}\cup\{-\omega_j^{\vee}\}_{j\in J}\right)}{\mathcal{P}^\vee},$$
where $\tilde{t}\in V$ represents  $t\in S_{N}$.
Then the preimage of $[t,p]$ under $\Phi$ is homeomorphic to the image of $tS_{N(\tilde{\sigma}_p)}$ along the quotient map $S_N\to \frac{S_N}{S_{N(\sigma_{p})}}\big/W_I$, 
which is 
$$\frac{tS_{N(\sigma_{[n],p})}S_{N(\sigma_{p})}}{S_{N(\sigma_{p})}}\bigg/W_I=\frac{tS_{N(\sigma_{[n],p})}}{S_{N(\sigma_{p})}}\bigg/W_I\cong\frac{\tilde{t}+\mathcal{P}^{\vee}+Span\left(\{\alpha_i^{\vee}\}_{i\in I}\right)+Span\left(\{\omega_j^{\vee}\}_{j\in J}\right)}{\mathcal{P}^{\vee}+Span\left(\{\omega_j^{\vee}\}_{j\in J}\right)}\bigg/ W_I,$$
where the first homeomorphism follows from that $\sigma_{[n],p}$ contains ${\sigma}_p$.

Via the inner product, we can write $V$ as $Span\left(\{\alpha_i^{\vee}\}_{i\in I}\right)\bigoplus Span\left(\{\alpha_i^{\vee}\}_{i\in I}\right)^{\bot}$ with projections $\pi_1,\pi_2$ into each summand respectively. Then the translation by $-\pi_2\left(\tilde{t}\right)$ induces a homeomorphism
$$
\begin{tikzcd}
  \tilde{t}+\mathcal{P}^{\vee}+Span\left(\{\alpha_i^{\vee}\}_{i\in I}\right)+Span\left(\{\omega_j^{\vee}\}_{j\in J}\right)\arrow[d,"-\pi_2\left(\tilde{t}\right)"']\\
  \mathcal{P}^{\vee}+Span\left(\{\alpha_i^{\vee}\}_{i\in I}\right)+Span\left(\{\omega_j^{\vee}\}_{j\in J}\right),
\end{tikzcd}
$$
which is  equivariant under actions of $\mathcal{P}^{\vee}+Span\left(\{\omega_j^{\vee}\}_{j\in J}\right)$ and $W_I$,
and hence $\Phi^{-1}\left([t,p]\right)$ is homeomorphic to
\begin{align*}
    \frac{\mathcal{P}^{\vee}+Span\left(\{\alpha_i^{\vee}\}_{i\in I}\right)+Span\left(\{\omega_j^{\vee}\}_{j\in J}\right)}{\mathcal{P}^{\vee}+Span\left(\{\omega_j^{\vee}\}_{j\in J}\right)}\bigg/ W_I
   \cong  \frac{Span\left(\{\alpha_i^{\vee}\}_{i\in I}\right)}{Span\left(\{\alpha_i^{\vee}\}_{i\in I}\right)\bigcap\left(\mathcal{P}^{\vee}+Span\left(\{\omega_j^{\vee}\}_{j\in J}\right)\right)}\bigg/ W_I,
\end{align*}
where the left space is clearly homeomorphic to $\frac{S_{N(\sigma_{[n],p})}}{S_{N(\sigma_{p})}}\big/W_I$.
\end{proof}

 \begin{rem}
  Follow the notation of the above proof of Lemma \ref{preimagePhi}, one can see that $R_I=R\cap Span\left(\{\alpha_i^{\vee}\}_{i\in I}\right)$ is a root system in the Euclidean space $Span\left(\{\alpha_i^{\vee}\}_{i\in I}\right)$, with simple roots $\{\alpha_i\}_{i\in I}$ and simple coroots $\{\alpha_i^{\vee}\}_{i\in I}$. In this situation, $\pi_1\left(\mathcal{P}^{\vee}\right)$ is the coweight lattice: actually, any coweight $\tilde{\omega}^{\vee}$ of $R\cap Span\left(\{\alpha_i^{\vee}\}_{i\in I}\right)$ can be extended to $\omega^{\vee}\in V^*$ by vanishing on $\{\alpha_k\}_{k\in [n]\setminus I}$, then $\omega^{\vee} \in \mathcal{P}^{\vee}$ and $\pi_1(\omega^{\vee})=\tilde{\omega}^{\vee}$; the converse inclusion is easy.
 \end{rem}

Let $\mathcal{SP}_{I,J}^{\vee}$ denote $Span\left(\{\alpha_i^{\vee}\}_{i\in I}\right)\bigcap\left(\mathcal{P}^{\vee}+Span\left(\{\omega_j^{\vee}\}_{j\in J}\right)\right)$, $\mathcal{P}_I^{\vee}$ denote the coweight lattice of $R_I$, $\mathcal{Q}_I^{\vee}$ denote the coroot lattice of $R_I$, ${A_I}$ denote the fundamental alcove of $R_I$ lying in $Span\left(\{\alpha_i^{\vee}\}_{i\in I}\right)$. One can observe that
  there are  inclusions of $W_I$-invariant lattices
  $\mathcal{Q}_I^{\vee}\subset \mathcal{SP}_{I,J}^{\vee}\subset\mathcal{P}_I^{\vee}$, and hence $\mathcal{SP}_{I,J}^{\vee}\rtimes W_I$ is a normal subgroup of $\mathcal{P}_{I}^{\vee}\rtimes W_I$ due to Lemma \ref{sublattice}. 
From the relation \ref{alcove quotient}, one gets the homeomorphism
$$\frac{Span\left(\{\alpha_i^{\vee}\}_{i\in I}\right)}{\mathcal{SP}_{I,J}^{\vee}}\bigg/ W_I\cong\frac{\bar{A_I}}{\nicefrac{\mathcal{SP}_{I,J}^{\vee}}{\mathcal{Q}_I^\vee}}.$$

As $\bar{A_I}$ is a polytope in $Span\left(\{\alpha_i^{\vee}\}_{i\in I}\right)$, one gets the following result due to Lemma \ref{contractible}:

\begin{coro}\label{fiber contractible}
  The fiber of  $\Phi$ is  contractible and locally contractible.
\end{coro}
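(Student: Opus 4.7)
The plan is to assemble the three ingredients that have been set up just before the corollary: the concrete description of the fiber from Lemma \ref{preimagePhi}, the alcove quotient identification \eqref{alcove quotient}, and the contractibility statement of Lemma \ref{contractible}. Each is already doing the substantive work; the corollary is essentially a matter of chaining them together correctly.

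Concretely, for a point $[t,p]\in X_{P/W}$ with $p$ in the relative interior of $H_IF_J\cap\bar{C}$, I would first invoke Lemma \ref{preimagePhi} to rewrite
$$\Phi^{-1}([t,p])\;\cong\;\frac{\mathrm{Span}\left(\{\alpha_i^{\vee}\}_{i\in I}\right)}{\mathcal{SP}_{I,J}^{\vee}}\bigg/W_I.$$
Next I would record that, inside $\mathrm{Span}(\{\alpha_i^{\vee}\}_{i\in I})$, the triple $\mathcal{Q}_I^{\vee}\subset\mathcal{SP}_{I,J}^{\vee}\subset\mathcal{P}_I^{\vee}$ consists of $W_I$-invariant lattices, so Lemma \ref{sublattice} makes $\mathcal{SP}_{I,J}^{\vee}\rtimes W_I$ normal in $\mathcal{P}_I^{\vee}\rtimes W_I$. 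The homeomorphism \eqref{alcove quotient} (applied to the root system $R_I$ in $\mathrm{Span}(\{\alpha_i^{\vee}\}_{i\in I})$) then identifies this fiber with
$$\frac{\bar{A_I}}{\nicefrac{\mathcal{SP}_{I,J}^{\vee}}{\mathcal{Q}_I^{\vee}}}.$$

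Finally, by Theorem \ref{alcove simplex} applied to each irreducible component of $R_I$, the closure $\bar{A_I}$ is a product of simplices and hence a polytope in $\mathrm{Span}(\{\alpha_i^{\vee}\}_{i\in I})$; the finite group $\mathcal{SP}_{I,J}^{\vee}/\mathcal{Q}_I^{\vee}$ acts on it by affine transformations (inherited from the translation action of $\mathcal{SP}_{I,J}^{\vee}$ composed with the projection). Lemma \ref{contractible} then delivers the conclusion that this quotient is both contractible and locally contractible, and transporting along the homeomorphisms above gives the same for $\Phi^{-1}([t,p])$.

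There is no real obstacle in this argument; all the technical content has been front-loaded into Lemma \ref{preimagePhi}, the alcove quotient identification, and Lemma \ref{contractible}. The only point worth flagging is the verification that $\bar{A_I}$ is genuinely a polytope in the reducible case (so that Lemma \ref{contractible} applies), which is handled by passing to irreducible components and taking a product of the corresponding fundamental alcoves.
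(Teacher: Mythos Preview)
Your proposal is correct and follows essentially the same route as the paper: invoke Lemma~\ref{preimagePhi}, sandwich $\mathcal{SP}_{I,J}^{\vee}$ between $\mathcal{Q}_I^{\vee}$ and $\mathcal{P}_I^{\vee}$, apply \eqref{alcove quotient} to rewrite the fiber as $\bar{A_I}/(\mathcal{SP}_{I,J}^{\vee}/\mathcal{Q}_I^{\vee})$, and finish with Lemma~\ref{contractible}. Your remark on the reducible case (that $\bar{A_I}$ is a product of simplices) matches the paper's comment just before Lemma~\ref{contractible}.
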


Lemma \ref{space prop}, \ref{map prop} and Corollary \ref{fiber contractible} together with Theorem \ref{smale} complete the proof of Theorem \ref{weak equivalence}.

\section{Generalization to parabolic subgroups}
We follow conditions in the beginning of Section 3.

Let $K$ be a subset of $[n]$, then simple reflections $s_k$'s for $k\in K$ generate $W_K$, a parabolic subgroup of $W$. Let $C_K$
be the fundamental chamber for the action of $W_K$ on V, i.e.,  
$$C_K=\{x\in V|\, \left\langle x,\alpha_k\right\rangle> 0,\, \forall k\in K\}.$$
One can follow the proof of Theorem \ref{Wconjugate} \cite[p.\,22]{humphreys1992reflection} to see this theorem also applies to the parabolic subgroup $W_K$, that is:
\begin{lem}\label{conjugate for paragp}
  $\overline{C_K}$ is a fundamental domain for the action of $W_K$ on $V$. Consequently, $P\cap\overline{C_K}$ is a fundamental domain for the action of $W_K$ on $P$.
\end{lem}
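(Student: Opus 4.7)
The plan is to reduce to Theorem \ref{Wconjugate} applied to the sub-root-system $R_K$. First I would observe that $W_K$ is generated by the simple reflections $\{s_{\alpha_k}\}_{k \in K}$ and is exactly the Weyl group of the reduced crystallographic root system $R_K = R \cap V_K$, where $V_K := \mathrm{Span}_{\mathbb{R}}(\{\alpha_k\}_{k \in K})$, with simple system $\{\alpha_k\}_{k \in K}$. Since each $s_{\alpha_k}$ with $k \in K$ acts as a reflection through a hyperplane in $V_K$ and fixes $V_K^{\perp}$ pointwise, the orthogonal decomposition $V = V_K \oplus V_K^{\perp}$ is $W_K$-stable and $W_K$ acts trivially on the second summand.

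Next I would rewrite $C_K$ using this decomposition. Because $\alpha_k \in V_K$ for each $k \in K$, the pairing $\langle x, \alpha_k\rangle$ depends only on the $V_K$-component of $x$, so
\[
\overline{C_K} \;=\; \overline{C_{R_K}} \oplus V_K^{\perp},
\]
where $\overline{C_{R_K}}$ is the closure of the fundamental chamber of $R_K$ in $V_K$. Applying Theorem \ref{Wconjugate} to $R_K$ in $V_K$ (this theorem does not require irreducibility, so reducibility of $R_K$ when $K$ is disconnected causes no difficulty) gives that every point of $V_K$ is $W_K$-conjugate to a unique point of $\overline{C_{R_K}}$. Combining this with the triviality of the $W_K$-action on $V_K^{\perp}$, every $v = y + z$ with $y \in V_K$, $z \in V_K^{\perp}$ has a unique $W_K$-representative $y_0 + z$ with $y_0 \in \overline{C_{R_K}}$, and this representative automatically lies in $\overline{C_K}$. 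This establishes the first assertion.

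For the consequence, I would use that $P = \mathrm{Conv}(W(a))$ is $W$-invariant, hence in particular $W_K$-invariant. So for any $p \in P$, the orbit $W_K \cdot p$ is contained in $P$, and by the first part meets $\overline{C_K}$ in a unique point, which therefore lies in $P \cap \overline{C_K}$. Thus $P \cap \overline{C_K}$ is a fundamental domain for the $W_K$-action on $P$.

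The only mildly subtle point is verifying that $R_K$ genuinely satisfies the hypotheses of Theorem \ref{Wconjugate}, i.e.\ that it is a reduced crystallographic root system in $V_K$ with $\{\alpha_k\}_{k\in K}$ as a simple system. This is standard: closure of $R$ under the relevant reflections implies $R_K = R \cap V_K$ is a root system, reducedness and the crystallographic condition are inherited from $R$, and the simple-system property follows from the fact that every element of $R_K$ is a non-negative or non-positive integral combination of the $\alpha_k$'s (since this already holds in $R$). Everything else is essentially a translation of Humphreys' argument for the full Weyl group to the sub-situation, which is why the lemma can be asserted by reference to the original proof.
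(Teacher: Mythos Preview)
Your proof is correct. The paper itself does not spell out an argument; it merely remarks (just before the lemma) that one can rerun the proof of Theorem~\ref{Wconjugate} from \cite[p.\,22]{humphreys1992reflection} with $W_K$ in place of $W$.

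Your route is slightly different: rather than repeating Humphreys' argument, you reduce to the already-stated Theorem~\ref{Wconjugate} by using the orthogonal decomposition $V = V_K \oplus V_K^{\perp}$, observing that $W_K$ is the full Weyl group of $R_K$ acting on $V_K$ and is trivial on $V_K^{\perp}$, and identifying $\overline{C_K}$ with $\overline{C_{R_K}} \oplus V_K^{\perp}$. This is cleaner and more self-contained within the paper, since it invokes a result already recorded rather than sending the reader back to the source. The paper's approach, by contrast, has the minor advantage that it would also immediately deliver the analogues of the other conclusions of Theorem~\ref{Wconjugate} (the difference being a non-negative combination of $\{\alpha_k\}_{k\in K}$, and the description of the isotropy group), which are used implicitly later in Section~5; your decomposition yields these too once you note that the isotropy and difference statements for $R_K$ in $V_K$ transfer unchanged along the direct-sum splitting.
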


As a result, we can identify the quotient space $P/W_K$ with $P\cap\overline{C_K}$, while the latter one is a polytope.  Write $\sigma_K$ for a cone in the normal fan $\Sigma_{P/W_K}$. For a point $p$ in $P\cap\overline{C_K}$, the cone $\sigma_{K,p}$ corresponds to the face of $P\cap\overline{C_K}$ whose relative interior contains $p$. The polytope  $P/W_K$ is associated to a toric variety $X_{P/W_K}$.

Now we move to the face structure of $P\cap\overline{C_K}$. Note that there are two classes of facets of $P\cap\overline{C_K}$: 
\begin{itemize}
  \item for any $k\in K$, $H_k$  supports a facet $H_k\cap P$;
  \item define $\Lambda_K:=\left\{(\{i\},r)\in \Lambda:\,r(F_i)\cap\overline{C_K}\ne\emptyset\right\}$, then $r(F_i)$ supports a  facet $r(F_i)\cap\overline{C_K}$ for any $(\{i\},r)\in \Lambda_K$. 
\end{itemize}
The second class is based on Corollary \ref{ConjuOfDomiFace}, where $\Lambda=\mathop{\bigcup}\limits_{I\subset [n]}\{I\}\times\frac{W}{W_{[n]\setminus I}}$ gives out all faces  of $P$.

\begin{lem}\label{face of para quotient}
  The quotient polytope $P/W_K$, identified with $P\cap\overline{C_K}$, is simple. Moreover, each vertex is given by $s\left(H_IF_{[n]\setminus I}\right)$ for some $s\in W$ and some unique $I\subset [n]$ such that $\{s(H_i)\}_{i\in I}\subset \{H_k\}_{k\in K}$. Consequently, $X_{P/W_K}$ is a toric orbifold.
\end{lem}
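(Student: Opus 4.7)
The plan is to decompose $\overline{C_K}$ as the union $\bigcup_{s \in W^K} s(\bar{C})$, where $W^K := \{s \in W : s^{-1}(\alpha_k) \in R^+ \text{ for all } k \in K\}$ is a set of distinguished coset representatives of $W_K \backslash W$, so that $P \cap \overline{C_K}$ becomes the union of the $n$-cube-like sub-polytopes $s(P \cap \bar{C})$ from Corollary \ref{QuotientIsSimple}. Given any vertex $v$ of $P \cap \overline{C_K}$, I would pick some $s \in W^K$ with $v \in s(\bar{C})$; since $v$ is extreme in the ambient polytope and $s(P \cap \bar{C})$ is contained in it, $v$ remains a vertex of $s(P \cap \bar{C})$. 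Pulling back by $s^{-1}$ and invoking Corollary \ref{QuotientIsSimple} gives $s^{-1}(v) = H_I F_{[n] \setminus I}$ for a unique $I \subset [n]$, so $v = s(H_I F_{[n] \setminus I})$; uniqueness of $I$ is forced by reading it off the minimal face of $P$ containing $v$ via Corollary \ref{ConjuOfDomiFace}, which is independent of the choice of $s$.

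Next, I would establish the condition $\{s(H_i)\}_{i \in I} \subset \{H_k\}_{k \in K}$ by contradiction. Suppose some $s(H_{i^*})$ is not among the bounding hyperplanes of $\overline{C_K}$; then it is an internal reflection hyperplane of $\overline{C_K}$. Using the classical fact that $s_{i^*}$ sends only $\alpha_{i^*}$ to a negative root among $R^+$, together with the observation that $s^{-1}(\alpha_k) \neq \alpha_{i^*}$ for all $k \in K$ (otherwise $s(H_{i^*})$ would equal $H_k$), one checks $s s_{i^*} \in W^K$, so the adjacent sub-polytope $s s_{i^*}(P \cap \bar{C})$ also lies in $P \cap \overline{C_K}$ and contains $v = s s_{i^*}(v_0)$ (noting $s_{i^*}$ fixes $v_0 \in H_{i^*}$). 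Gluing the two sub-polytopes across the shared wall $s(H_{i^*})$, the region $P \cap \overline{C_K}$ locally extends on both sides of $s(H_{i^*})$ near $v$; a tangent-cone analysis then produces a nontrivial line through $v$ in the tangent cone of $P \cap \overline{C_K}$, contradicting extremality of $v$. I expect this gluing and tangent-cone argument to be the main obstacle, since one must pin down an explicit direction whose reflection across $s(H_{i^*})$ also lies in the extended tangent cone after accounting for the remaining $H_k$ constraints and the facets of $P$.

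Once the vertex form is in hand, simplicity of $P \cap \overline{C_K}$ follows because the $n$ facets through $v = s(H_I F_{[n] \setminus I})$ are exactly the $|I|$ boundary hyperplanes $H_{k_i}$ (with $s(H_i) = H_{k_i}$) together with the $|[n] \setminus I|$ facets $s(F_j)$ of $P$, whose normals are the $s$-translates of the linearly independent basis produced at $H_I F_{[n] \setminus I}$ in the proof of Corollary \ref{QuotientIsSimple}. Since a simple polytope has a simplicial normal fan, $X_{P/W_K}$ is a toric orbifold, by the same reasoning applied in Lemma \ref{toric orbifold}.
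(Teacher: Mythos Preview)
Your overall strategy---pull the vertex back to a vertex of the cube $P\cap\bar C$ via $W$-conjugacy---is the paper's strategy as well, and your steps 1--5 and 7 are correct and parallel to the paper's argument. The divergence is in how you obtain the condition $\{s(H_i)\}_{i\in I}\subset\{H_k\}_{k\in K}$.

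The paper does this by a bare facet count, with no reflection across internal walls. Once $v=s(H_IF_{[n]\setminus I})$, the facets of $P$ through $v$ are precisely the $n-|I|$ hyperplanes supporting $s(F_j)$, $j\notin I$; the remaining facets of $P\cap\overline{C_K}$ through $v$ are the walls $H_k$ for $k$ in some $K'\subset K$. Since $v$ is a vertex of an $n$-polytope, $|K'|+(n-|I|)\ge n$, i.e.\ $|K'|\ge|I|$. Conversely, for each $k\in K'$ the hyperplane $s^{-1}(H_k)$ is a reflecting hyperplane through $s^{-1}(v)\in\bar C$, hence by Theorem~\ref{isotropy} it equals some $H_i$ with $i\in I$; thus $s^{-1}$ injects $\{H_k\}_{k\in K'}$ into $\{H_i\}_{i\in I}$ and $|K'|\le|I|$. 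Equality forces the injection to be a bijection, giving $\{s(H_i)\}_{i\in I}=\{H_k\}_{k\in K'}\subset\{H_k\}_{k\in K}$ and simplicity simultaneously.

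Your step 6 is both unnecessary and, as written, contains a real gap. The assertion that gluing $s(P\cap\bar C)$ and $ss_{i^*}(P\cap\bar C)$ across $s(H_{i^*})$ ``produces a nontrivial line'' in the tangent cone is false in general convex geometry: two full-dimensional simplicial cones reflected across a common facet can have a pointed convex hull (e.g.\ in $\mathbb R^3$ take the cone on $(1,0,0),(0,1,0),(1,1,1)$ and reflect across $\{z=0\}$). In the present setting the natural candidate direction $s(\alpha_{i^*}^\vee)$ fails too: moving from $v_0$ along $\alpha_{i^*}^\vee$ typically exits $\bar C$ through some other wall $H_i$ with $\langle\alpha_{i^*}^\vee,\alpha_i\rangle<0$. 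What actually forces the tangent cone at $v$ to contain a line is that it is cut out by only $|K'|+(n-|I|)<n$ half-spaces---but recognising this \emph{is} the facet count above, after which the passage through $ss_{i^*}$ and the second chamber becomes superfluous.
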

\begin{proof}
  Suppose that $x\in V$ is a vertex of $P\cap\overline{C_K}$.
  Choose $K'\subset K$, $\Lambda_K'\subset\Lambda_K$ to be maximal in the sense that $x\in H_{k}\cap r(F_i)$ for all $k\in K'$, $\left(\{i\},r\right)\in\Lambda_K'$. Since $P\cap\overline{C_K}$ is a polytope of dimension $n$, one gets that $\left|K'\right|+\left|\Lambda_K'\right|\ge n$.
  By Theorem \ref{Wconjugate}, $x$ is $W$-conjugate to a unique point $y$ of $P\cap\bar{C}$. Suppose that $x=s(y)$ for some $s\in W$. By Corollary \ref{QuotientIsSimple}, the point $y$ is only supported by $\{H_i\}_{i\in I}$ and $\{F_j\}_{j\in J}$ for some disjoint subsets $I,J\subset [n]$. 
 Then the two bijections from Corollary \ref{ConjuOfDomiFace}
  \begin{align*}
    \{H_k\}_{k\in K'}\xrightarrow[\cong]{s^{-1}} \{H_i\}_{i\in I}\qquad\quad
    \{r(F_i)\}_{(i,r)\in\Lambda_K'}\xrightarrow[\cong]{s^{-1}} \{F_j\}_{j\in J}
  \end{align*}
  implies that $I\cup J=[n]$,
  $\left|K'\right|+\left|\Lambda_K'\right|=n$ and $x\in s\left(H_IF_{[n]\setminus I}\right)\cap\overline{C_K}$.
\end{proof}

Similarly one can show that a point $p$ in $P\cap\overline{C_K}$ must lie in the relative interior of $s\left(H_IF_{J}\right)\cap\overline{C_K}$ for some $s\in W$ and some unique disjoint subsets $I,J$ of $[n]$.
Then
the cone $\sigma_p$ is equal to $Cone\left(\{s(-\omega_j^{\vee})\}_{j\in J}\right)$ while
the cone $\sigma_{K,p}$ is equal to $Cone\left(\{s(\alpha_i^{\vee})\}_{i\in I}\cup\{s(-\omega_j^{\vee})\}_{j\in J}\right)$, invariant under the action of $sW_Is^{-1}$.
Note that $\{s(H_i)\}_{i\in I}$ is  a subset of $\{H_k\}_{k\in  K}$, and hence $\{s(\alpha_i^\vee)\}_{i\in I}$ is  a subset of $\{\alpha^\vee_k\}_{k\in  K}$, $sW_Is^{-1}$ is a subgroup of $W_K$.

\begin{exam}
  In Example \ref{quotientPolyToA2}, Let $K$ be $\{1\}\subset [2]$. Identify $P/W$ with $P\cap\bar{C}$, then $P/W$, $s_2(P/W)$ and $s_2s_1(P/W)$ constitute $P\cap\overline{C_K}$.
  See the picture below, where the gray area indicates $C_K$. 
\begin{figure}[H]
  \centering
  \begin{tikzpicture}[scale=0.6]
    \newdimen\R
    \R=3cm
    \fill[purple,opacity=0.05] (0:\R)--(60:\R)--(120:\R)--(180:\R)--(240:\R)--(300:\R) -- cycle;
    \fill[gray,opacity=0.2] (30:{1.3*\R})--(60:{1.3*\R})--(120:{1.3*\R})--(180:{1.3*\R})--(210:{1.3*\R})--cycle;
      \node[right,gray] at (150:{1.6*\R}) {$C_K$};  

      \node[left,brown] at (180:3.2cm) {$s_2s_1(P/W)$};
      \fill[brown,opacity=0.8] (0:0)--(30:{0.5*sqrt(3)*\R})--(60:\R)--(90:{0.5*sqrt(3)*\R})--cycle;
      \node[right,brown] at (55:\R) {$P/W$};

      \node[above,brown] at (120:3.2cm) {$s_2(P/W)$};

      \draw[fill=red] (90:{0.5*sqrt(3)*\R}) circle[radius=0.1];
      \node[red, above right] at (90:{0.5*sqrt(3)*\R}) {$p_1$};

      \draw[fill=red] (210:{0.5*sqrt(3)*\R}) circle[radius=0.1];
      \node[red,left] at (210:{0.5*sqrt(3)*\R}) {$p_2$};

      \draw[fill=red] (30:{0.5*\R}) circle[radius=0.1];
      \node[red,above] at (30:{0.5*\R}) {$p_3$};
    \foreach \x in {60,120,...,360} {\draw[very thick,->] (0:0) -- (\x:\R);}
    \draw[thick,purple] (360:\R) foreach \x in {60,120,...,360} { -- (\x:\R) };
    \foreach \x/\l/\p in
      {       360/{$\alpha_2^{\vee}$}/right
      }
      \node[label={\p:\l}] at (\x:\R) {};
      \node[left,blue] at (90:4cm) {$H_2$};
      \node[right,blue] at (30:4cm) {$H_1$};
      \foreach \x in {-60,0,60} {       
          \begin{scope}[shift=(\x: 0 cm)]
            \draw[dashed,blue] (90+\x:4cm)--(90+\x:-4cm);
          \end{scope}
      }
      
 \end{tikzpicture}
\end{figure}
In this picture, we pick out three red points $p_1,p_2$ and $p_3$ in $P/W_K$, while $p_1$ and $p_3$ are also points of $P$ and $P/W$. When $p$ ranges among $\{p_1,p_2,p_3\}$, the ray generators of its associated cones $\sigma_p$, $\sigma_{[n],p}$ and $\sigma_{K,p}$ are listed in the following table:
\begin{table}[H]
  \begin{center}
    \begin{tabular}{c|c|c|c}
      \textbf{Point} & \textbf{Ray generators of $\sigma_p$} & \textbf{Ray generators of $\sigma_{[n],p}$} & \textbf{Ray generators of $\sigma_{K,p}$} \\
      \hline
      $p=p_1$ & $-\omega_1^\vee$ & $-\omega_1^\vee,\;\; \alpha_2^\vee$ & $-\omega_1^\vee$\\
      \hline
      \multirow{2}{*}{$p=p_2$} & \multirow{2}{*}{$s_2s_1(-\omega_1^\vee)=\omega_2^\vee$}  & \multirow{2}{*}{Undefined} & $s_2s_1(-\omega_1^\vee)=\omega_2^\vee$\\
       &   & & $s_2s_1(\alpha_2^\vee)=\alpha_1^\vee$\\
      \hline
      {$p=p_3$} & $0$ & $\alpha_1^\vee$ &$\alpha_1^\vee$  \\
    \end{tabular}
  \end{center}
\end{table}

\end{exam}

We can get similar results with those in Section 3, listed in the following lemma:

\begin{lem}\label{subgp poly}
  \begin{itemize}
    \item[$(a)$] There is a homeomorphism 
    $$X_P/W_K\cong \frac{S_N\times\left(P\cap\overline{C_K}\right)}{\sim_{ed}},$$
    where $(t_1,p_1)\sim_{ed}(t_2,p_2)$ if and only if 
    $p_1=p_2$ lying in the minimal face $s\left(H_IF_{J}\right)\cap\overline{C_K}$ of $P\cap\overline{C_K}$ for some $s\in W$ and some disjoint subsets $I,J\subset [n]$, and
    $t_1$, $t_2$ represnet the same element of $\frac{S_N}{S_{N(\sigma_{p_1})}}\big/sW_Is^{-1}$. 
    \item[$(b)$] $X_P/W_K$, $X_{P/W_K}$ are both finite simplicial complexes, and hence connected, Hausdorff, compact, locally contractible, separable metric spaces.
    \item[$(c)$] The quotient map $S_N\times\left(P\cap\overline{C_K}\right)\to \frac{S_N\times\left(P\cap\overline{C_K}\right)}{\sim}$ factors through a proper surjection
    $$\Phi_K:  \frac{S_N\times\left(P\cap\overline{C_K}\right)}{\sim_{ed}}\to \frac{S_N\times\left(P\cap\overline{C_K}\right)}{\sim}=X_{P/W_K}.$$
    \item[$(d)$] The fiber of  $\Phi_K$ is  contractible and locally contractible.
    More precisely, when $[t,p]$ is a point of $X_{P/W_K}$, and $p$ lies in the relative interior of some face $s\left(H_IF_{J}\right)\cap\overline{C_K}$, one can get that $\Phi_K^{-1}([t,p])$ is homeomorphic to 
     $$\frac{S_{N({\sigma}_{K,p})}}{S_{N(\sigma_{p})}}\bigg/sW_Is^{-1}\xrightarrow[\cong]{s^{-1}}\frac{Span\left(\{\alpha_i^{\vee}\}_{i\in I}\right)}{\mathcal{SP}_{I,J}^{\vee}}\bigg/ W_I,$$
     while the latter space is (locally) contractible.
  \end{itemize}
\end{lem}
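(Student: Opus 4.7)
The plan is to run the arguments of Section 4 essentially verbatim, replacing the Weyl group $W$ and the fundamental chamber $\bar{C}$ by the parabolic subgroup $W_K$ and its fundamental chamber $\overline{C_K}$, and using the faces $s(H_IF_J)\cap\overline{C_K}$ indexed by triples $(s,I,J)$ in place of the dominant faces $H_IF_J\cap\bar{C}$. The only genuinely new ingredient is the bookkeeping of $W_K$-isotropy groups under conjugation by elements of $W$.

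For part $(a)$, a point $p$ in the relative interior of $s(H_IF_J)\cap\overline{C_K}$ is $W$-conjugate by $s^{-1}$ to a point $y$ in the relative interior of $H_IF_J\cap\bar{C}$; Theorem \ref{Wconjugate} gives $W$-isotropy $W_I$ at $y$, hence $W$-isotropy $sW_Is^{-1}$ at $p$. The observation before the lemma that $\{s(H_i)\}_{i\in I}\subset\{H_k\}_{k\in K}$ ensures $sW_Is^{-1}\subset W_K$, so this is also the full $W_K$-isotropy at $p$. Then I would copy the construction preceding Lemma \ref{equi description}: define $\sim_{ed}$ by identifying torus coordinates modulo $S_{N(\sigma_p)}$ together with the induced action of $sW_Is^{-1}$; the quotient map $S_N\times (P\cap\overline{C_K})\to X_P/W_K$ factors through $\sim_{ed}$, and the induced continuous surjection is a bijection by Lemma \ref{conjugate for paragp} and the isotropy computation just made.

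For part $(b)$, Lemma \ref{face of para quotient} already gives $X_{P/W_K}$ as a toric orbifold, hence triangulable. For $X_P/W_K$, I would invoke Illman's theorem on the smooth $W_K$-action on $X_P$ to obtain a $W_K$-equivariant triangulation, which descends to an ordinary triangulation of $X_P/W_K$; compactness and the remaining topological properties then follow exactly as in Lemma \ref{space prop}. Part $(c)$ is formal: the relation $\sim_{ed}$ is finer than $\sim$, so the canonical quotient map factors through $\Phi_K$, and properness follows from compactness and Hausdorffness of source and target.

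Part $(d)$ is the main point. Given $[t,p]$ with $p$ in the relative interior of $s(H_IF_J)\cap\overline{C_K}$, the preimage of $(t,p)$ in $S_N\times (P\cap\overline{C_K})$ is $tS_{N(\sigma_{K,p})}\times\{p\}$; passing to $X_P/W_K$ under $\sim_{ed}$ collapses the $S_{N(\sigma_p)}$-direction and quotients by $sW_Is^{-1}$, yielding
$$\Phi_K^{-1}([t,p])\;\cong\;\frac{S_{N(\sigma_{K,p})}}{S_{N(\sigma_p)}}\bigg/sW_Is^{-1}.$$
Applying the homeomorphism of $X_P$ induced by $s^{-1}$ transports this fiber to one of the form already treated in Lemma \ref{preimagePhi}, namely $\mathrm{Span}(\{\alpha_i^{\vee}\}_{i\in I})/\mathcal{SP}_{I,J}^{\vee}\big/W_I$. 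The discussion following Lemma \ref{preimagePhi} identifies this quotient with $\bar{A_I}/(\mathcal{SP}_{I,J}^{\vee}/\mathcal{Q}_I^{\vee})$, to which Lemma \ref{contractible} applies and yields the asserted contractibility and local contractibility. The main obstacle I expect is purely organizational: verifying that the action of $s^{-1}$ carries the cones $\sigma_p$, $\sigma_{K,p}$, the sublattices $N(\sigma_p)$, $N(\sigma_{K,p})$, and the subtori $S_{N(\sigma_p)}$, $S_{N(\sigma_{K,p})}$ on the nose to their analogues over the dominant face, so that the reduction to Lemma \ref{preimagePhi} is rigorous rather than merely plausible.
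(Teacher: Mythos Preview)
Your proposal is correct and follows essentially the same approach as the paper: the paper's own proof simply points back to the corresponding results in Section 4 (Lemma \ref{equi description} for $(a)$, Lemmas \ref{equi.triangulation} and \ref{space prop} for $(b)$, Diagram \ref{factorization} and Lemma \ref{map prop} for $(c)$, Lemma \ref{preimagePhi} and Corollary \ref{fiber contractible} for $(d)$), leaving the adaptation to $W_K$ implicit. You have filled in precisely the bookkeeping the paper omits, in particular the identification of the $W_K$-isotropy group at $p$ as $sW_Is^{-1}$ and the use of $s^{-1}$ to transport the fiber computation back to the dominant situation of Lemma \ref{preimagePhi}.
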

\begin{proof}
  For $(a)$ see Lemma \ref{equi description}. For $(b)$ see Lemma \ref{equi.triangulation}, \ref{space prop}. For $(c)$ see Diagram \ref{factorization} and Lemma \ref{map prop}. For $(d)$ see Lemma \ref{preimagePhi} and Corollary \ref{fiber contractible}.
\end{proof}

Then apply Theorem \ref{smale}, we get 
\begin{thm}\label{result for parabolic}
  The map $\Phi_K:X_P/W_K\to X_{P/W_K}$ is a homotopy equivalence.
\end{thm}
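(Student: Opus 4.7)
The plan is to mirror the proof of Theorem \ref{weak equivalence} essentially verbatim, now that all the technical ingredients for the parabolic case have been assembled in Lemma \ref{subgp poly}. The idea is to apply Smale's Vietoris-type theorem (Theorem \ref{smale}) to deduce that $\Phi_K$ is a weak equivalence, and then upgrade this to a homotopy equivalence by invoking Whitehead's theorem, which is applicable because both the source and target are CW complexes.

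First, I would verify the hypotheses of Theorem \ref{smale}. By Lemma \ref{subgp poly}(b), both $X_P/W_K$ and $X_{P/W_K}$ are finite simplicial complexes, and in particular they are connected, Hausdorff, locally compact, and separable metric. Lemma \ref{subgp poly}(c) gives that $\Phi_K$ is continuous, proper, and surjective. Local contractibility of $X_P/W_K$ (from Lemma \ref{subgp poly}(b)) implies that it is locally $k$-connected for every $k \geq 0$. Finally, Lemma \ref{subgp poly}(d) shows that every fiber $\Phi_K^{-1}([t,p])$ is contractible and locally contractible, hence $(k-1)$-connected and locally $(k-1)$-connected for every $k \geq 1$.

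With these ingredients, I would apply Theorem \ref{smale} iteratively: for each $k \geq 1$, the theorem gives that $(\Phi_K)_\ast : \pi_r(X_P/W_K) \to \pi_r(X_{P/W_K})$ is an isomorphism for $0 \leq r \leq k-1$ and surjective for $r = k$. Letting $k$ range over all positive integers yields that $(\Phi_K)_\ast$ is an isomorphism on $\pi_r$ for every $r \geq 0$, so $\Phi_K$ is a weak homotopy equivalence. Since both spaces are (finite) simplicial complexes, hence have the homotopy type of CW complexes, Whitehead's theorem promotes this weak equivalence to a genuine homotopy equivalence.

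I do not expect any serious obstacle in this step, as the heavy lifting has been packaged into Lemma \ref{subgp poly}. The one place where care is required is checking that the contractibility of fibers in part (d), which comes from the quotient-of-polytope argument of Lemma \ref{contractible} applied to $\bar{A_I}/(\mathcal{SP}_{I,J}^{\vee}/\mathcal{Q}_I^{\vee})$, produces the local connectivity hypotheses of Smale's theorem at every level $k$; this is precisely what "locally contractible" guarantees, so the chain of implications goes through cleanly. The only conceptual remark is that applying Smale's theorem requires an induction on $k$, since each application only increases the range of isomorphic $\pi_r$'s by one, but this is automatic given that the hypotheses on fibers and on local connectivity of the source hold uniformly in $k$.
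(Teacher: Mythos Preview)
Your proposal is correct and follows exactly the paper's own approach: the paper's proof of Theorem \ref{result for parabolic} is the single sentence ``Then apply Theorem \ref{smale},'' relying on Lemma \ref{subgp poly} for all the hypotheses, just as you outline. Your explicit mention of Whitehead's theorem to pass from weak equivalence to homotopy equivalence is precisely what the paper encodes in the line ``Since $X_P/W,\, X_{P/W}$ are simplicial complexes \ldots\ it suffices to show that $\Phi$ is a weak equivalence'' in the proof of Theorem \ref{weak equivalence}.
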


\section{Results on alternative Weyl polytopes}

Suppose that in a real Euclidean space $V$ of dimension $n$, $R$ is a reduced crystallographic root system of rank $n$, with a simple system $\Delta$. Let the lattice $N$ be the coroot lattice $\mathcal{Q}^{\vee}$, its dual $M$ be the weight lattice $\mathcal{P}$. We consider the \textbf{alternative Weyl polytope} $Q:=Conv(W(a))$ in $\mathcal{P}\otimes_{\mathbb{Z}}\mathbb{R}=V$ with $a\in \bar{C}\cap\mathcal{P}$, and its normal vectors in $\mathcal{Q}^{\vee}\otimes_{\mathbb{Z}}\mathbb{R}=V$.

The quotient space $Q/W$ can be identified with a polytope $Q\cap\bar{C}$.
 We write $\tau$ for a cone of $\Sigma_{Q}$, and $\tau_{[n]}$ for a cone of $\Sigma_{Q/W}$. 
Then for a point $q$ in $Q$ (resp. $Q\cap \bar{C}$), the cone $\tau_q$ (resp. $\tau_{[n],q}$) corresponds to the face of $Q$ (resp. $Q\cap \bar{C}$) whose relative interior contains $q$.

\begin{rem}
  The dual results (Theorem \ref{dual res}, \ref{dual quo}) are very similar with the original ones (Theorem \ref{weak equivalence}, \ref{result for parabolic}). Actually, 
  the face structure of $Q$ and its quotient polytopes are the same as those of $P$ and its quotient polytopes, given in Theorem \ref{face of P}, Corollary \ref{ConjuOfDomiFace}, \ref{QuotientIsSimple} and Lemma \ref{face of para quotient}. The key difference is that $X_P$ is smooth while $X_Q$ is not, see Example \ref{dual ex of toric} below.
\end{rem}

\begin{exam}\label{dual ex of toric}
  Consider the alternative Weyl polytope $Q$ of Lie type $A_3$, there are  relations written in matrices:
  $$
  4\cdot\begin{bmatrix}
    \omega_1^{\vee}\\
    \omega_2^{\vee}\\
    \omega_3^{\vee}
  \end{bmatrix}=
  \begin{bmatrix}
    3 & 2 & 1\\
    2 & 4 & 2\\
    1 & 2 & 3
  \end{bmatrix}\cdot
  \begin{bmatrix}
    \alpha_1^{\vee}\\
    \alpha_2^{\vee}\\
    \alpha_3^{\vee}
  \end{bmatrix}
  $$
  This gives out primitive ray generators $-4\omega_1^\vee,-2\omega_2^\vee,-4\omega_3^\vee$ of the cone $\tau_a$ in $\Sigma_{Q}$, which doesn't form a basis of $\mathcal{Q}^{\vee}$, and hence $X_Q$ is an orbifold.
\end{exam}

\begin{lem}\label{QLocalContra}
  $X_{Q}/W$ and $X_{Q/W}$ are finite simplicial complexes.
\end{lem}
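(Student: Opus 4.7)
The plan is to mirror the proof of Lemma \ref{equi.triangulation}, with adjustments for the fact that $X_Q$ is an orbifold rather than a smooth manifold. For $X_{Q/W}$ the argument is identical to the one there: it is the toric variety associated to the complete fan $\Sigma_{Q/W}$, hence admits a triangulation by the toric/algebraic triangulation results \cite{hironaka1975triangulations, hofmann2009triangulation}, and compactness of $Q/W$ makes this triangulation finite.

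For $X_Q/W$, the new issue is that Illman's smooth $G$-equivariant triangulation theorem used in Lemma \ref{equi.triangulation} no longer applies, since $X_Q$ fails to be smooth (Example \ref{dual ex of toric}). I would replace it with a semi-algebraic equivariant triangulation statement: $X_Q$ is a compact real algebraic variety carrying the algebraic (hence semi-algebraic) action of the finite group $W$, and such data admits a finite $W$-equivariant triangulation. Passing to the quotient then gives a finite triangulation of $X_Q/W$.

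An alternative that avoids appealing to such a result is to build the triangulation directly from the topological model $X_Q \cong (S_N \times Q)/\sim$. Choose a $W$-equivariant simplicial subdivision of $Q$ in which every face of $Q$ is a subcomplex (iterated barycentric subdivisions work, since the $W$-action is affine and permutes faces), together with a $W$-equivariant triangulation of $S_N$ fine enough that every subtorus $S_{N(\tau)}$ for $\tau \in \Sigma_Q$ is a subcomplex. The product simplicial structure on $S_N \times Q$ is then compatible with $\sim$ and descends to a $W$-equivariant simplicial structure on $X_Q$, whose quotient gives the desired finite simplicial structure on $X_Q/W$.

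The main obstacle is the loss of smoothness when going from the $P$-setting to the $Q$-setting: either route, the semi-algebraic citation or the hands-on construction from the topological model, must install an equivariant simplicial structure whose combinatorics respects the $W$-action and the toric stratification simultaneously. In the direct approach this simultaneous compatibility, especially arranging all the $S_{N(\tau)}$ to be subcomplexes at once within a $W$-invariant triangulation of $S_N$, is the delicate point.
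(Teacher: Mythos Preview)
Your primary route---replacing Illman's smooth result by a semi-algebraic equivariant triangulation theorem for the orbifold $X_Q$ with its algebraic $W$-action, and handling $X_{Q/W}$ exactly as in Lemma~\ref{equi.triangulation}---is precisely the paper's approach; the paper invokes \cite[Theorem 1.1]{park1998semialgebraic} for the equivariant triangulation of $X_Q$. Your alternative hands-on construction from the topological model is not used (and, as you note, the simultaneous subcomplex condition on all $S_{N(\tau)}$ is genuinely nontrivial to arrange).
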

\begin{proof}
  Due to \cite[Theorem 1.1]{park1998semialgebraic}, the toric orbifold $X_{Q}$ is  (homeomorphic to) a $W$-equivariant simplicial complex, then $X_{Q}/W$ is a simplicial complex.
  See Lemma \ref{equi.triangulation} for triangulability of $X_{Q/W}$.
\end{proof}

We can get similar results with Lemma \ref{subgp poly} in the following lemma:
\begin{lem}\label{dual gp quo}
  \begin{itemize}
    \item[$(a)$] There is a homeomorphism 
    $$X_{Q}/W\cong \frac{S_N\times\left(Q\cap\bar{C}\right)}{\sim_{ed}},$$
    where $(t_1,q_1)\sim_{ed}(t_2,q_2)$ if and only if 
    $q_1=q_2$ lying in the minimal face $H_IF_{J}\cap\bar{C}$ of $Q\cap\bar{C}$ for some disjoint subsets $I,J\subset [n]$, and
    $t_1$, $t_2$ represnet the same element of $\frac{S_N}{S_{N(\tau_{q_1})}}\big/W_I$. 
    \item[$(b)$] $X_{Q}/W$, $X_{Q/W}$ are connected, Hausdorff, compact, locally contractible, separable metric spaces.
    \item[$(c)$] The canonical quotient map 
    $S_N\times\left(Q\cap\bar{C}\right)\to \frac{S_N\times\left(Q\cap\bar{C}\right)}{\sim}$ factors through a proper surjection
    $$\Psi:  \frac{S_N\times\left(Q\cap\bar{C}\right)}{\sim_{ed}}\to \frac{S_N\times\left(Q\cap\bar{C}\right)}{\sim}.$$
    \item[$(d)$] The fiber of  $\Psi$ is  contractible and locally contractible.
    More precisely, when $[t,q]$ is a point of $X_{Q/W}$, and $q$ lies in the relative interior of some face $H_IF_{J}\cap\bar{C}$, one can get that $\Psi^{-1}([t,q])$ is homeomorphic to 
     $$\frac{S_{N({\tau}_{[n],q})}}{S_{N(\tau_{q})}}\bigg/W_I\cong \frac{Span\left(\{\alpha_i^{\vee}\}_{i\in I}\right)}{Span\left(\{\alpha_i^{\vee}\}_{i\in I}\right)\bigcap\left(\mathcal{Q}^{\vee}+Span\left(\{\omega_j^{\vee}\}_{j\in J}\right)\right)}\bigg/ W_I,$$
     while the latter space is (locally) contractible.
  \end{itemize}
\end{lem}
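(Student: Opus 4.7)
The plan is to transport the arguments of Section 4 essentially verbatim, substituting $N=\mathcal{Q}^{\vee}$ and $M=\mathcal{P}$ for the coweight/root lattice pair. The remark preceding Example \ref{dual ex of toric} records that the face structure of $Q$ and of $Q\cap\bar{C}$ is combinatorially identical to that of $P$ and $P\cap\bar{C}$ (Theorem \ref{face of P}, Corollaries \ref{ConjuOfDomiFace} and \ref{QuotientIsSimple}); the change of ambient lattice only affects which facet normals are primitive, and hence only whether $X_Q$ is smooth---irrelevant for any of the topological conclusions here.

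Part $(a)$ is Lemma \ref{equi description} transported directly: the topological model $X_Q\cong (S_N\times Q)/\sim$ from Section 2 is $W$-equivariant, and Theorem \ref{Wconjugate} identifies the stabilizer of a point $q$ in the relative interior of $H_IF_J\cap\bar{C}$ as $W_I$, so quotienting the torus factor $S_N/S_{N(\tau_q)}$ by this action yields exactly the relation $\sim_{ed}$. Part $(b)$ is immediate from Lemma \ref{QLocalContra} together with the elementary topology used in Lemma \ref{space prop}: finite simplicial complexes are locally contractible, embed as subcomplexes of a standard Euclidean simplex (hence are Hausdorff, locally compact, and metrizable), and every compact metric space is separable.

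For part $(c)$, the factorization exists exactly when $\sim_{ed}\subset\sim$. To see this inclusion, note that for any simple reflection $s_i$ ($i\in I$) and any lift $\tilde t\in V$ of $t\in S_N$ one has $s_i(\tilde t)-\tilde t=-\langle \tilde t,\alpha_i\rangle \alpha_i^{\vee}\in\mathrm{Span}(\alpha_i^{\vee}:i\in I)$, which by iteration persists for all $w\in W_I$. Since the $\alpha_i^{\vee}$ ($i\in I$) are ray generators of $\tau_{[n],q}$, the difference $w(t)-t$ lies in the subtorus $S_{N(\tau_{[n],q})}$, so $W_I$-translates are already $\sim$-equivalent. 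Properness of $\Psi$ is the standard consequence of continuity from a compact space to a Hausdorff space.

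Part $(d)$ is the main step and should follow Lemma \ref{preimagePhi} and Corollary \ref{fiber contractible} nearly verbatim: decompose $V=\mathrm{Span}(\alpha_i^{\vee}:i\in I)\oplus\mathrm{Span}(\alpha_i^{\vee}:i\in I)^{\bot}$ and translate by $-\pi_2(\tilde t)$ to identify $\Psi^{-1}([t,q])$ with the stated quotient. To invoke Lemma \ref{contractible} for contractibility, the crucial check is the sandwich
$\mathcal{Q}_I^{\vee}\subset \mathrm{Span}(\alpha_i^{\vee}:i\in I)\cap(\mathcal{Q}^{\vee}+\mathrm{Span}(\omega_j^{\vee}:j\in J))\subset \mathcal{P}_I^{\vee}$; the lower inclusion is obvious, and the upper one follows by pairing any element of the middle lattice with $\alpha_i$ for $i\in I$, which kills the $\omega_j^{\vee}$-part (since $j\notin I$) and returns an integer from the $\mathcal{Q}^{\vee}$-part. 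Then \ref{alcove quotient} applied to the root sub-system $R_I$ rewrites the fiber as $\bar{A_I}$ modulo a finite subgroup of $\mathcal{P}_I^{\vee}/\mathcal{Q}_I^{\vee}$ acting by affine transformations, and Lemma \ref{contractible} delivers both contractibility and local contractibility. The main obstacle I anticipate is bookkeeping rather than conceptual: one must track which inputs of the Section 4 proof secretly depended on $N=\mathcal{P}^{\vee}$ (for instance in justifying that $W_I$ preserves the relevant sublattice, where one used $\langle\omega_j^{\vee},\alpha_i\rangle=\delta_{ij}$) and verify each carries over to $N=\mathcal{Q}^{\vee}$.
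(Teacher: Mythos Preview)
Your proposal is correct and follows the same approach as the paper, which in fact offers no proof at all for this lemma and simply presents it as the evident analogue of the Section~4 results (just as Lemma~\ref{subgp poly} is proved by pointer). Your write-up is more explicit than the paper's in one useful respect: you isolate and verify the lattice sandwich $\mathcal{Q}_I^{\vee}\subset\mathrm{Span}(\alpha_i^{\vee}:i\in I)\cap(\mathcal{Q}^{\vee}+\mathrm{Span}(\omega_j^{\vee}:j\in J))\subset\mathcal{P}_I^{\vee}$, which is the only place where replacing $\mathcal{P}^{\vee}$ by $\mathcal{Q}^{\vee}$ requires a fresh check, and your argument for the upper inclusion (pair with $\alpha_i$, use $\langle\omega_j^{\vee},\alpha_i\rangle=0$ for $j\in J$ disjoint from $I$, and integrality of Cartan integers) is exactly right.
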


Then apply Theorem \ref{smale}, we get a similar result with Theorem \ref{weak equivalence}:
\begin{thm}\label{dual res}
  The map $\Psi:X_{Q}/W\to X_{Q/W}$  is a homotopy equivalence.
\end{thm}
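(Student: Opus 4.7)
The plan is to run exactly the same argument that established Theorem \ref{weak equivalence}, with $(P,W)$ replaced by $(Q,W)$. All the pieces have already been assembled in Lemma \ref{dual gp quo} and Lemma \ref{QLocalContra}, so the proof of Theorem \ref{dual res} is essentially a bookkeeping exercise that applies Smale's theorem and then upgrades the resulting weak equivalence to a genuine homotopy equivalence via Whitehead's theorem.

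First I would note that by Lemma \ref{QLocalContra}, the spaces $X_Q/W$ and $X_{Q/W}$ are finite simplicial complexes, and hence have the homotopy type of CW complexes. Therefore, by Whitehead's theorem, it suffices to show that $\Psi$ is a weak homotopy equivalence. To do this I would verify the hypotheses of Theorem \ref{smale} for every $k \ge 1$ and induct on $k$: Lemma \ref{dual gp quo}(b) provides that both $X_Q/W$ and $X_{Q/W}$ are connected, locally compact, separable metric spaces, and local contractibility gives local $k$-connectedness for all $k$; Lemma \ref{dual gp quo}(c) gives that $\Psi$ is a proper surjection; and Lemma \ref{dual gp quo}(d) guarantees that each fiber $\Psi^{-1}([t,q])$ is contractible and locally contractible, hence $(k{-}1)$-connected and locally $(k{-}1)$-connected for every $k \ge 1$. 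Applying Theorem \ref{smale} then yields that $\Psi_*:\pi_r(X_Q/W)\to \pi_r(X_{Q/W})$ is an isomorphism for all $r\ge 0$, so $\Psi$ is a weak equivalence and consequently a homotopy equivalence.

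The potentially subtle point—and the one place where something genuinely differs from the Weyl-polytope case—is the preliminary claim that $X_{Q/W}$ is a toric orbifold so that the topological model from Section 2 applies verbatim. Here the cone $\tau_a\in\Sigma_Q$ need not be generated by a $\mathbb{Z}$-basis of $\mathcal{Q}^\vee$ (as Example \ref{dual ex of toric} shows in type $A_3$), so $X_Q$ itself is only an orbifold rather than smooth. However, the face structure of $Q$ is identical to that of $P$ (the author already observed this in the remark preceding Example \ref{dual ex of toric}), in particular $Q$ and $Q\cap \bar{C}$ are simple polytopes with the same combinatorial type, so their normal fans remain simplicial and $X_Q,X_{Q/W}$ are toric orbifolds. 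This is exactly what is needed to get the topological model $X_Q\cong (S_N\times Q)/\sim$ and the fiber computation in Lemma \ref{dual gp quo}(d); the quotient of the coroot sublattice $\mathrm{Span}(\{\alpha_i^\vee\}_{i\in I})\cap(\mathcal{Q}^\vee+\mathrm{Span}(\{\omega_j^\vee\}_{j\in J}))$ then plays the role that $\mathcal{SP}^\vee_{I,J}$ did in Section 4, and contractibility of the fiber follows from Lemma \ref{contractible} applied to the closed alcove of the sub-root system $R_I$.

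The main obstacle, then, is not in the homotopy-theoretic conclusion but in confirming that every structural statement from Sections 3–5 carries over to the dual setting—most importantly that $\Psi$ is well-defined on the equivalence classes of $\sim_{ed}$, that the fiber identification holds with $\mathcal{Q}^\vee$ in place of $\mathcal{P}^\vee$, and that the $W_I$-action on the fiber torus is still the natural reflection action (so that contractibility of the quotient alcove is applicable). Once these are confirmed the application of Theorem \ref{smale} followed by Whitehead is purely formal.
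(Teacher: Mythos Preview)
Your proposal is correct and follows essentially the same approach as the paper: the paper's proof of Theorem \ref{dual res} consists entirely of the sentence ``apply Theorem \ref{smale}'' after having recorded Lemma \ref{QLocalContra} and Lemma \ref{dual gp quo}, exactly as in the proof of Theorem \ref{weak equivalence}. Your additional commentary on the orbifold-versus-smooth distinction and on why the structural lemmas transfer to the $\mathcal{Q}^\vee$ setting is accurate and matches the paper's remark preceding Example \ref{dual ex of toric}.
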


One can  also get a general result that is similar with Theorem \ref{result for parabolic}. Let $K$ be a subset of $[n]$, then the following result holds:

\begin{thm}\label{dual quo}
  There is a homotopy equivalence $\Psi_K:X_{Q}/W_K \to X_{Q/W_K}$ for any parabolic subgroup $W_K$ of $W$.
\end{thm}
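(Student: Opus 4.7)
The plan is to combine the parabolic strategy of Section 5 with the orbifold-level setup already used for $\Psi$ in Lemma \ref{dual gp quo}. First I would identify $Q/W_K$ with the polytope $Q \cap \overline{C_K}$ via Lemma \ref{conjugate for paragp}, and record its face structure exactly as in Lemma \ref{face of para quotient}: facets come in two classes, those supported by $H_k$ for $k \in K$ and those supported by $r(F_i)$ for $(\{i\},r) \in \Lambda_K$, with vertices of the form $s(H_I F_{[n]\setminus I})$ for appropriate $s \in W$. Since this combinatorial description depends only on $W$-orbits of dominant faces of $Q$ and not on the ambient lattice, it transfers verbatim from the $P$-case.

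Next I would construct the topological model and the map $\Psi_K$. Starting from the natural homeomorphism $X_Q \cong (S_N \times Q)/\sim$, the $W_K$-action descends, and a point $q$ in the relative interior of $s(H_I F_J) \cap \overline{C_K}$ has isotropy subgroup $sW_Is^{-1}$, which fixes $q$ but acts nontrivially on the torus factor. As in Lemma \ref{subgp poly}(a), this yields
\[
X_Q/W_K \cong \frac{S_N \times (Q \cap \overline{C_K})}{\sim_{ed}},
\]
where $(t_1,q_1) \sim_{ed} (t_2,q_2)$ iff $q_1 = q_2$ and $t_1, t_2$ agree in $(S_N/S_{N(\tau_{q_1})})/sW_Is^{-1}$. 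The canonical quotient map $S_N \times (Q \cap \overline{C_K}) \to X_{Q/W_K}$ then factors through a continuous, proper, surjective map $\Psi_K : X_Q/W_K \to X_{Q/W_K}$.

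I would then verify that both spaces are finite simplicial complexes, hence connected, Hausdorff, compact, locally contractible, separable metric spaces, using \cite{park1998semialgebraic} for the triangulability of $X_Q$ (exactly as in Lemma \ref{QLocalContra}) and standard toric triangulability for $X_{Q/W_K}$. For the fiber, a computation parallel to Lemma \ref{preimagePhi}, with $\mathcal{P}^\vee$ replaced throughout by $\mathcal{Q}^\vee$, shows that for $[t,q] \in X_{Q/W_K}$ with $q$ in the relative interior of $s(H_I F_J) \cap \overline{C_K}$,
\[
\Psi_K^{-1}([t,q]) \cong \frac{\mathrm{Span}(\{\alpha_i^\vee\}_{i \in I})}{\mathrm{Span}(\{\alpha_i^\vee\}_{i \in I}) \cap (\mathcal{Q}^\vee + \mathrm{Span}(\{\omega_j^\vee\}_{j \in J}))} \bigg/ W_I
\]
after conjugation by $s^{-1}$. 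Writing $\mathcal{SP}^\vee_{I,J}(\mathcal{Q})$ for the denominator lattice, one checks that $\mathcal{Q}_I^\vee \subset \mathcal{SP}^\vee_{I,J}(\mathcal{Q}) \subset \mathcal{P}_I^\vee$: the lower inclusion is immediate from $\alpha_i^\vee \in \mathcal{Q}^\vee$ for $i \in I$, while the upper one follows because pairing any element of $\mathcal{Q}^\vee + \mathrm{Span}(\{\omega_j^\vee\}_{j \in J})$ against $\alpha_i$ with $i \in I$ (so $i \notin J$) lands in $\mathbb{Z}$. Hence equation \ref{alcove quotient} identifies the fiber with $\overline{A_I}/(\mathcal{SP}^\vee_{I,J}(\mathcal{Q})/\mathcal{Q}_I^\vee)$, which is contractible and locally contractible by Lemma \ref{contractible}. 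Applying Smale's theorem (Theorem \ref{smale}) in every degree then gives a weak equivalence, which between CW complexes upgrades to a genuine homotopy equivalence.

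The main obstacle I anticipate is precisely the verification that $\mathcal{SP}^\vee_{I,J}(\mathcal{Q})$ still sits between $\mathcal{Q}_I^\vee$ and $\mathcal{P}_I^\vee$, so that Lemma \ref{sublattice} and the alcove-quotient identification (\ref{alcove quotient}) remain applicable in the dual setting; apart from this lattice bookkeeping, every step mirrors the previously established arguments for $\Phi_K$ and $\Psi$.
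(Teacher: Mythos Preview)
Your proposal is correct and follows exactly the approach the paper intends: the paper states Theorem \ref{dual quo} without a detailed proof, remarking only that it is ``a general result that is similar with Theorem \ref{result for parabolic},'' i.e.\ one should redo the parabolic argument of Section 5 with $\mathcal{P}^\vee$ replaced by $\mathcal{Q}^\vee$ as in Lemma \ref{dual gp quo}. You have carried out precisely this, including the one genuinely new verification (that $\mathcal{Q}_I^\vee \subset \mathcal{SP}^\vee_{I,J}(\mathcal{Q}) \subset \mathcal{P}_I^\vee$ and is $W_I$-invariant), which the paper leaves implicit.
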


\bibliographystyle{alpha}
\bibliography{ref}

\end{document}